\theoremstyle{plain}
\newtheorem{theorem}{Theorem}[section]
\newtheorem{prop}[theorem]{Proposition}
\newtheorem{lemma}[theorem]{Lemma}
\newtheorem{cor}[theorem]{Corollary}
\theoremstyle{definition}
\def\mathclap#1{\text{\hbox to 0pt{\hss$\mathsurround=0pt#1$\hss}}}
\newcommand{\la}{\lambda}
\newcommand{\ga}{\gamma}
\newcommand{\om}{\omega}
\newcommand{\ep}{\epsilon} 
\newcommand{\De}{\Delta}
\newcommand{\de}{\delta}
\newcommand{\si}{\sigma}
\newcommand{\R}{{\mathbb R}}
\newcommand{\Z}{{\mathbb Z}}
\newcommand{\T}{{\mathbb T}}
\newcommand{\abs}[1]{\left|#1\right|}
\renewcommand{\geq}{\geqslant}
\renewcommand{\leq}{\leqslant}
\newcommand{\tex}[1]{\texorpdfstring{#1}{?}}
\newcommand{\imm}{\hookrightarrow}
\newenvironment{remark}{\refstepcounter{theorem}\par\medskip\noindent{\it
Remark~\thetheorem.}}{\unskip\nobreak\hfill\hbox{ $\oslash$}\par\medskip}
\newenvironment{definition}{\refstepcounter{theorem}\par\medskip\noindent{\it
Definition~\thetheorem.}}{\unskip\nobreak\hfill\par\medskip}
\newcommand{\vect}[2]{\begin{pmatrix}#1\\#2\end{pmatrix}}
\newcommand{\matr}[4]{\begin{pmatrix}#1&#2\\#3&#4\end{pmatrix}}
\newcommand{\sltz}{{{\rm SL}_2(\Z)}}
\newcommand{\psltz}{{{\rm PSL}_2(\Z)}}
\newcommand{\sltr}{{{\rm SL}_2(\R)}}
\newcommand{\helixmap}{\mathrm{hlx}}
\newcommand{\semitoric}{\mathcal{S}_{\mathrm{ST}}}
\newcommand{\helixspace}{\mathcal{S}_{\mathrm{H}}}
\newcommand{\STgroup}{\Z*\Z}
\title{Minimal models in semitoric geometry}
\definecolor{darkgreen}{rgb}{.1,.6,.2}
\definecolor{lightgr}{rgb}{.8,.8,.8}
\definecolor{urlcolor}{rgb}{.1,.1,.4}
\definecolor{blackblue}{rgb}{.1,.1,.3}
\definecolor{tocolor}{rgb}{.1,.1,.5}
\definecolor{urlcolor}{rgb}{.2,.2,.6}
\definecolor{linkcolor}{rgb}{.1,.1,.6}
\definecolor{citecolor}{rgb}{.6,.2,.1}
\definecolor{remcolor}{rgb}{.6,.2,.2}
\definecolor{blue}{rgb}{0,0,.6}
\numberwithin{equation}{section}
\date{}
\author{
\noindent
  D.M.~Kane
  \quad
 J.~Palmer \quad
 \'A.~Pelayo}
\newcommand{\addresses}{
	\bigskip

\noindent Daniel M.~Kane \qquad \texttt{dakane@ucsd.edu}\\
	{\footnotesize \textsc{
	Department of Mathematics, University of California, San Diego \\
	9500 Gilman Drive\\
	La Jolla, CA 92093-0112, USA}}

\bigskip
		
\noindent Joseph Palmer \qquad \texttt{j.palmer@rutgers.edu}\\
	{\footnotesize \textsc{
	Department of Mathematics, Rutgers University \\
	Hill Center - Busch Campus\\
	110 Frelinghuysen Road\\
	Piscataway, NJ 08854-8019, USA}}

\bigskip
	
\noindent \'Alvaro Pelayo \qquad\texttt{alpelayo@math.ucsd.edu}\\
	{ \footnotesize\textsc{
	Department of Mathematics, University of California, San Diego \\
	9500 Gilman Drive\\
	La Jolla, CA 92093-0112, USA}}
}
\begin{document}
\title{
{\bf Minimal models of compact symplectic semitoric manifolds}}
\maketitle
\begin{abstract}
A symplectic semitoric manifold is a symplectic $4$\--manifold endowed with a Hamiltonian
$(S^1 \times \mathbb{R})$\--action satisfying certain conditions.
The goal of this paper is to construct a new symplectic invariant of
symplectic semitoric manifolds, the helix, and give applications. 
The helix is a symplectic analogue of the fan of a 
nonsingular complete toric variety in algebraic geometry, that takes
into account the effects of the monodromy
near focus-focus singularities. We give two
applications of the helix: first, we use it to
give a classification of
the minimal models of symplectic semitoric
manifolds, where ``minimal" is in the sense of
not admitting any blowdowns. The second
application is an extension to the compact
case of a well known result of V\~{u} Ng\d{o}c
about the constraints posed on a symplectic
semitoric manifold by the existence of focus-focus
singularities. The helix permits to translate a symplectic 
geometric problem into an algebraic problem,
and the paper describes a method to solve
this type of algebraic problem.
\end{abstract}


\section{Introduction}

The revolution in symplectic toric geometry started in the 1980s with the proof of the convexity of the image of the momentum map $F=(f_1,\ldots,f_n) \colon (M,\omega) \to \mathbb{R}^n$ associated to a compact symplectic $2n$\--manifold acted upon by a Hamiltonian $n$\--dimensional compact connected abelian Lie group $T$ (i.e. an $n$\--dimensional torus $T=(S^1)^n$),  due independently to
Guillemin-Sternberg~\cite{GuSt1982} and Atiyah~\cite{At1982}; such manifolds are called \emph{symplectic toric}. In fact, $F(M)$ is the polytope $\Delta$ equal to the convex hull of the image under $F$ of the fixed points of the $T$\--action.   

Shortly after, Delzant proved~\cite{De1988} that $\Delta$ encodes all of the information about the manifold $M$, the form $\omega$,  and the $\omega$-preserving $T$\--action. That is, $\Delta$ is the only symplectic $T$\--equivariant invariant of $(M,\omega,F)$.
He moreover showed that any simple, rational, smooth polytope $\Delta$ arises
as the image of a momentum map of a symplectic-toric manifold; following Guillemin these polytopes are now called \emph{Delzant}.

The existence of this action poses restrictions on $(M,\omega)$ and $F$. For instance, $F$
only has elliptic singularities; moreover, the fibers are tori of dimension $0$ up to $n$ 
(in particular, they are submanifolds of $M$).

Delzant's classification was extended in~\cite{PeVNinvent2009, PeVNacta2011}
to compact and noncompact symplectic $4$\--manifolds acted up by the noncompact Lie group $S^1 \times \mathbb{R}$, under 
certain assumptions (all singularities must be non-degenerate, with none of hyperbolic type, the moment map of the $S^1$\--action
must be proper, and
each fiber contains at most one isolated singularity) these manifolds are called \emph{symplectic semitoric}, and so far are classified
when $M$ is $4$\--dimensional.  In this case the momentum map of the
$(S^1 \times \mathbb{R})$\--action is $F=(f_1,f_2)$, where the Hamiltonian vector field $\mathcal{X}_{f_1}$ is
periodic, but not necessarily $\mathcal{X}_{f_2}$.  The main novelty with respect to symplectic toric
manifolds is that $F$ may have, in addition to elliptic singularities, focus\--focus singularities. The fiber
containing a focus\--focus singularity is not a submanifold, it is homeomorphic to a sphere with it south
and north poles identified (i.e. a torus pinched at the focus\--focus singularity).

Symplectic semitoric manifolds are characterized by  five invariants, one of which is a polygon $P$ constructed from $F(M)$ according to V\~{u} Ng\d{o}c~\cite{VN2007}, by unfolding the singular affine structure induced by $F$ on $F(M)$ as a subset of 
$\mathbb{R}^2$ (in fact $F(M)$ need not even be convex\footnote{and in all important examples it is never a polygon,
including the coupled spin\--oscillator and the spin\--orbit system}).   The 
other four invariants account for the effect of the focus\--focus singularities and the monodromy around them (a fundamental phenomena studied by Duistermaat~\cite{Du1980}), they are:  the number of focus\--focus singularities; a Taylor series in two variables characterizing the dynamics near the fibers containing focus\--focus singularities; a height invariant measuring the volume of certain submanifolds at these singularities; and an index which measures the twist of $F$ viewed as a singular Lagrangian fibration, near focus-focus values of $F$ relative to the global toric momentum map which is used to create the polygon invariant.  

\begin{definition}
A  symplectic toric or symplectic semitoric manifold is \emph{minimal} if it does not admit a blow down.
\end{definition}

For a symplectic toric manifold chopping off a corner of $\Delta$ corresponds
to $T$\--equivariantly blowing up $M$ at a $T$\--fixed point, and the inverse
operation corresponds to blowing down.  
To $\Delta$ one  can associate a \emph{fan}, the one corresponding 
to $(M,\omega)$ when viewed as a nonsingular complete toric variety (the explicit relation appears in~\cite{DuPe2009}). Because of this correspondence the search for their minimal model is reduced to an algebraic problem concerning fans associated to Delzant polytopes. If $2n\geq 6$ the problem is still too difficult but when $2n=4$ Fulton classified the corresponding $2$\--dimensional fans.

\begin{theorem}[W.~Fulton 1993]
The inequivalent minimal models of symplectic toric manifolds are $\mathbb{CP}^2$,
$\mathbb{CP}^1 \times \mathbb{CP}^1$, and a Hirzebruch surface
with parameter $k\neq \pm 1$.
\end{theorem}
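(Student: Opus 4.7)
The plan is to use Delzant's correspondence to translate the problem to $2$-dimensional nonsingular complete fans, so that a compact symplectic toric $4$-manifold is encoded by a cyclic sequence $v_1,\dots,v_n$ of primitive lattice vectors in $\Z^2$ with $\det(v_i,v_{i+1})=1$ for all $i$ (mod $n$). A toric blow-down exists at the ray $v_i$ precisely when $v_{i-1}+v_{i+1}=v_i$; defining integers $a_i$ by $v_{i-1}+v_{i+1}=a_iv_i$, the manifold is minimal if and only if $a_i\neq 1$ for every $i$. The classification is then reduced to an algebraic question about such cyclic sequences.

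A key identity is $\sum_{i=1}^n a_i=3n-12$, which I would establish by induction on the number of rays (noting that a blow-up adds one to $n$ and decreases two neighboring $a_i$'s by one, keeping both sides of the formula balanced) starting from the base case $\mathbb{CP}^2$; equivalently, it follows from Noether's formula on the corresponding smooth projective toric surface.

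The small cases $n=3,4$ reduce to direct computations after normalizing $(v_1,v_2)=((1,0),(0,1))$ via the $SL_2(\Z)$-action on the lattice. For $n=3$, the smoothness and completeness conditions force $v_3=(-1,-1)$, recovering $\mathbb{CP}^2$. For $n=4$, one similarly obtains $v_3=(-1,0)$ and $v_4=(k,-1)$ for some $k\in\Z$, realizing the Hirzebruch surface $F_k$, with associated sequence $(a_1,a_2,a_3,a_4)=(k,0,-k,0)$; the minimality condition $a_i\neq 1$ excludes precisely $k=\pm 1$.

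The main obstacle is the case $n\geq 5$, where one must produce some $a_i=1$. My approach combines the sum formula, which gives $\sum a_i\geq 3$ and hence forbids all $a_i\leq 0$, with the recursion $v_{i+1}=a_iv_i-v_{i-1}$: starting from a normalized pair $(v_1,v_2)$, one iteratively propagates and imposes both $v_{n+1}=v_1$ and the requirement that the $v_i$ circle the origin exactly once. A finite but delicate case analysis on the sign patterns of the $a_i$, ruling out long alternations between indices where $a_i\geq 2$ and where $a_i\leq 0$, shows any sequence with no $a_i=1$ forces either two distinct rays to coincide or the primitivity of some intermediate $v_i$ to fail when $n\geq 5$. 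This combinatorial enumeration, carried out explicitly by Fulton, is the technical heart of the argument and completes the classification.
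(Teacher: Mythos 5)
Your overall route is the classical one due to Fulton (and is consistent with the framework of Section~\ref{sec_background}): pass to the fan, encode the surface by the integers $a_i$ with $v_{i-1}+v_{i+1}=a_iv_i$, observe that minimality means $a_i\neq 1$ for all $i$, use $\sum_i a_i=3n-12$, and settle $n=3,4$ by normalizing $(v_1,v_2)$ under $\sltz$. Those parts are correct (note the paper itself does not reprove this theorem either; it cites \cite{Fulton1993}, and its own machinery --- words $ST^{a_0}\cdots ST^{a_{d-1}}=_GS^4$ and the winding number $W$ --- gives an alternative self-contained proof of the fan classification via \cite[Lemma 4.8]{KaPaPe2016}, which is what is invoked in Case I of the proof of Lemma~\ref{lem_classify}). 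Two small corrections: with your sign convention a blow-up \emph{increases} the two neighboring $a_i$ by one (it is the self-intersection numbers $-a_i$ that drop by one); as written, your inductive bookkeeping changes the left side of $\sum a_i=3n-12$ by $-2+1=-1$ while the right side changes by $+3$, so the induction would not close. Also, in your $n\geq 5$ discussion, "primitivity of some intermediate $v_i$ fails" cannot be the contradiction: $\det(v_i,v_{i+1})=1$ already forces every $v_i$ to be primitive.

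The genuine gap is the case $n\geq 5$, which is the entire content of the theorem: you must show that every complete nonsingular fan with at least five rays contains some $i$ with $v_i=v_{i-1}+v_{i+1}$. Your proposal reduces this to "a finite but delicate case analysis on the sign patterns of the $a_i$ \ldots carried out explicitly by Fulton," i.e., it defers the decisive step to the very source being proved. The inequality $\sum a_i\geq 3$ only guarantees one index with $a_i\geq 2$; it does not by itself control the recursion $v_{i+1}=a_iv_i-v_{i-1}$, because indices with large $a_i$ contribute arbitrarily negative amounts to the rotation and indices with $a_i\leq 0$ contribute at least a fixed positive amount, so no immediate counting contradiction arises. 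One must actually exploit the constraint that the $v_i$ wind exactly once around the origin --- in the language of this paper, that $W(ST^{a_0}\cdots ST^{a_{d-1}})=1$ together with the standard form of Theorem~\ref{thm_standardpsltz} --- or run Fulton's explicit enumeration. Until that step is supplied, the proposal establishes only that $\mathbb{CP}^2$, $\mathbb{CP}^1\times\mathbb{CP}^1$, and the Hirzebruch surfaces with $k\neq\pm1$ \emph{are} minimal, not that they are the only minimal models.
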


The Delzant polytopes of the minimal models are: a simplex ($M=\mathbb{CP}^2$ with any multiple of the Fubini-Study form), a rectangle 
($M=\mathbb{CP}^1 \times \mathbb{CP}^1$ with any product form), and a trapezoid ($M$ a Hirzebruch surface, with its
standard form). The
question is whether Fulton's classification can cover more cases.

\medskip
\noindent
{\bf Main Question}. \emph{What are the inequivalent minimal models of compact symplectic semitoric manifolds?}

\medskip

Even more interesting would be  to know whether the question can be answered as an application of the known invariants.
However, it is not clear what the effect of blowing up and down is
on the known invariants we have just described. 
The image $F(M)$ is no longer necessarily a polygon, or even a convex set.
The polygon $P$ is obtained as the image of a homeomorphism $\varphi \colon F(M) \to P \subset \mathbb{R}^2$ 
which unfolds the singular affine structure of $F(M)$ into $P$, taking into account the monodromy (the construction 
of $\varphi$ is delicate, see~\cite{VN2007}). 
The effect of blowing up or down on $P$ depends on the position of the focus\--focus values of
$F$, and here is where a new invariant of compact symplectic semitoric manifolds we call the \emph{semitoric helix},
denote it by $\mathcal{H}$, 
comes into play.  Like in the toric case, 
$\mathcal{H}$ is given by (an equivalence class of) vectors in $\mathbb{Z}^2$, plus some additional information
which we describe later more precisely and which includes the information of focus\--focus singularities
and monodromy (this does not appear in the toric case).

\begin{figure}
 \centering
 \includegraphics[width = 400pt]{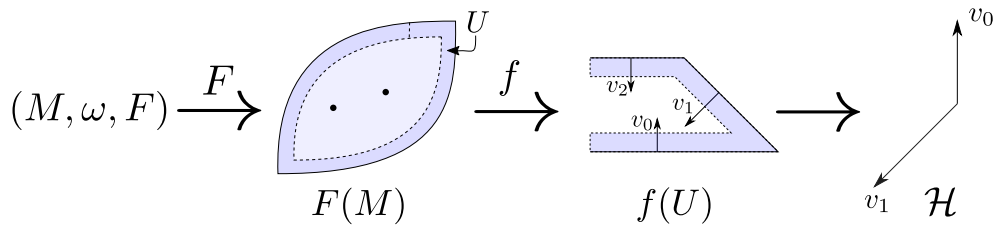}
 \caption{The helix is intrinsically constructed by defining a toric
 momentum map on the preimage of $U$, a neighborhood of the boundary of the image
 of the momentum map minus a single cut, and collecting the
 inwards pointing normal vectors of the piecewise linear boundary
 of the resulting set in $\R^2$.}
 \label{fig_constructhelix}
\end{figure}

Analogous to the way in which from a Delzant polygon one
constructs a fan, from $P$ one constructs the helix $\mathcal{H}$ (after
making some corrections related to the focus-focus singular points), see 
Figure~\ref{fig_helixfrompoly},
though the helix can also be constructed directly from $M$, 
bypassing the polygon, as in Figure~\ref{fig_constructhelix}.
The helix $\mathcal{H}$ 
 contains the information encoding blowing up and blowing down, 
 information which appears very difficult to extract from known 
 invariants. And $\mathcal{H}$ generalizes the fan while taking 
 into account for the effects of the monodromy around the focus-focus 
 singularities.  Moreover, $\mathcal{H}$ can be studied with algebraic 
 techniques, and can be applied to prove:

\medskip
\noindent
{\bf Main Theorem}. \emph{There are precisely seven inequivalent families of minimal models of compact symplectic 
semitoric manifolds.
 Each model is associated to an explicitly describable helix as given in Theorem~\ref{thm_mainintro}.}  
 \medskip
 
One can apply this result to extend a theorem of V{\~u} Ng{\d{o}}c~\cite{VN2007} from
noncompact to compact symplectic semitoric manifolds: if a compact symplectic semitoric manifold 
with momentum map $F=(f_1,f_2)$ has more than two focus-focus singular points, then $f_1$ has either a non-unique 
maximum or a non-unique minimum.  The Main Theorem and application can be used to 
study many integrable systems from classical mechanics such as the 
spin\--orbit system~\cite{LFPe2016, SaZh1999} (see Section~\ref{sec_coupledspin}).
It has precisely one focus\--focus singularity with monodromy, and 
is an example of a compact (nontoric) symplectic semitoric manifold.  
In a different direction, and as an application of Fulton's theorem, 
in~\cite{PePRS2013} some properties of the associated moduli spaces
of manifolds were studied in detail; it would be interesting to use the above result to study a semitoric analogue. 
First steps towards this have been carried out
by the second author in~\cite{PaSTMetric2015},
where a natural topology on the space of symplectic semitoric manifolds
is constructed.

We conclude by explaining the idea of the proof of the Main Theorem. 
The proof of this theorem operates by translating the problem into algebraic language in which a number of things are easier to work with. The basic ideas behind this technique were already present in~\cite{KaPaPe2016}, 
but they are refined and developed so as to be useful
in practical applications. The algebraic correspondence works as follows. To any semitoric helix,
as on the right hand side of Figure~\ref{fig_constructhelix}, there is a natural way to associate a word of a particular form in $\sltz$.
The word is
$
\si = ST^{a_0}\ldots ST^{a_{d-1}},
$
where the $a_i$ are integers and $S,T\in\sltz$ represent the specific matrices
given in Equation~\ref{eqn_ST}.
Our attempt to classify helices will operate by attempting to understand the associated words, which must satisfy two conditions. Firstly, 
$\si$ must be conjugate to $T^c$, where $c\in\Z$ is the
number of focus-focus points of the associated system.
Additionally, we need to ensure that our helix wraps only a single time around the origin before repeating. 
That is, we can define the number of times a collection of vectors
$v_0, \ldots, v_{d-1}$ winds around the origin by following a path
from $v_0$ to $v_0$ which connects $v_i$ to $v_{i+1}$, moves only counterclockwise, and circles the origin the minimal number of times.
In order to detect this winding number from the word
$\si$ in $\sltz$, we will need to lift $\si$ to the universal cover of $\sltr$, in which we can define a function on words which
agrees with the winding number of the associated helix, which we call
the winding number of that word.
We let $G$ denote the preimage of $\sltz$ in the universal cover of $\sltr$. We are then able to produce an exact correspondence between minimal semitoric helices and words of particular form in $G$ that lie in one of a small number of conjugacy classes in $G$.

The key idea in our analysis now follows from the observation that almost all of the appropriate words conjugate to the correct element of $\sltz$ have winding numbers that are too large. In fact, the elements that we are looking for will necessarily have nearly the smallest possible winding number of any word representing the correct element of $\sltz$. In order to properly analyze this, we show that each element of $\psltz$ has a unique minimal word associated to it with the smallest possible winding number. In fact, any representation of the given element can be reduced to the minimal one by means of a few simple reduction steps. The thrust of our argument is now to look at the minimal word associated to the element represented by our helix. Noting that the word corresponding to our helix reduces to this minimal word in only a few steps, allows us to reduce ourselves to a small number of possibilities.
The main novelty of the paper is precisely
this method of proof, which although it may be natural to algebraists, we have not seen 
used in symplectic geometry.

\emph{Acknowledgements.} DMK is supported by NSF grant
DMS-1553288.  AP and JP are supported by
NSF grants DMS-1055897 and DMS-1518420. 
AP also received support from Severo Ochoa Program 
at ICMAT in Spain. Part of this work was carried out at 
Universidad Complutense de Madrid and ICMAT.

\subsubsection*{Structure of the article}
In Section~\ref{sec_mainresults} we state the main results of the paper,
Theorem~\ref{thm_mainintro} and Theorem~\ref{thm_Jmax}.
In Section~\ref{sec_background} we review some background material
regarding symplectic toric manifolds.
In Section~\ref{sec_thehelix} we define the semitoric helix, outline
its construction, and state Theorem~\ref{thm_classifyvectors} which
is a precise version of Theorem~\ref{thm_mainintro}.
In Section~\ref{sec_geometry} we present the construction of
a semitoric helix from a symplectic semitoric manifold.
In Section~\ref{sec_algebraictech} we explain the connection between semitoric
helices and $\sltz$ and
in Section~\ref{sec_pltz} we introduce a standard form for elements
of $\psltz$.
Finally, in Section~\ref{sec_minhelices} we use the results of
Sections~\ref{sec_algebraictech} and~\ref{sec_pltz} to prove 
Theorems~\ref{thm_mainintro} and~\ref{thm_classifyvectors}.
In Section~\ref{sec_examples} we follow the argument of the proof
of Theorem~\ref{thm_classifyvectors} applied to a specific example and also explain the example
of the coupled angular momenta system. Section~\ref{sec_nonminhelices} we prove 
Theorem~\ref{thm_Jmax}.

\section{Main results}
\label{sec_mainresults}

\begin{definition}(\cite{PeVNinvent2009, PeVNacta2011})
 A \emph{symplectic semitoric manifold} is a connected $4$\--dimensional
 integrable system $(M,\om,F = (J,H))$ such that:
 \begin{enumerate}[topsep=0pt, itemsep=0pt]
  \item\label{item_stproper} $J$ is proper, that is, if $K\subset\R$ is compact
  then $J^{-1}(K)$ is compact;
  \item\label{item_stperiodic} the Hamiltonian vector field $\mathcal{X}_J$
   induced by $J$ has periodic flow of period $2\pi$ and the $S^1$\--action
   generated by this flow is effective;
  \item\label{item_stnondeg} all singularities of $F$ are non-degenerate and contain
   no hyperbolic blocks.
 \end{enumerate}
\end{definition}
\vspace{-12pt}
Item~\eqref{item_stnondeg} refers to the Williamson classification
of singularities for integrable systems (see~\cite{Zu1996}).
In~\cite{El1984} Eliasson
extends the pointwise classification of singular points implied
by Williamson's classification of Cartan subalgebras of
$\mathfrak{sp}(2n)$~\cite{Wi1996} to a local normal form for non-degenerate singular
points.
Since $\mathrm{dim}(M)=4$, item~\eqref{item_stnondeg} implies that any point $p\in M$ in
a symplectic semitoric manifold is one of: completely regular; elliptic-regular;
elliptic-elliptic; or focus-focus.
In this article we assume all symplectic semitoric manifolds are
\emph{simple}, which means there is at most one focus-focus
point in each level set of $J$.
\begin{definition}\label{def_stiso}
Given two symplectic semitoric manifolds $(M,\om,F)$ and $(M',\om',F')$
a symplectomorphism $\psi\colon (M,\om)\to (M',\om')$ is a \emph{semitoric isomorphism}
if $\psi^*(J',H') = (J, f(J',H'))$ where $f\colon \R^2\to\R^2$ is a smooth map
with $\frac{\partial}{\partial y}f\neq 0$ everywhere.
\end{definition}
\vspace{-10pt}
\begin{prop}\label{prop_helixisinvrt}
 To each compact symplectic semitoric manifold $(M,\om,F)$
 there is an associated semitoric helix
 denoted $\helixmap(M,\om,F)$
 and $\helixmap(M,\om,F) = \helixmap(M',\om',F')$
 if $(M,\om,F)$ and $(M',\om',F')$ are isomorphic as 
 symplectic semitoric manifolds.
\end{prop}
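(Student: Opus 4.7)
The plan is to construct the helix directly from $(M,\om,F)$ following the procedure illustrated in Figure~\ref{fig_constructhelix}, and then verify that the resulting equivalence class is independent of the auxiliary choices made during the construction and depends only on the semitoric isomorphism class. First I would set up the construction: let $c$ denote the number of focus-focus values and let $m_1,\ldots,m_c\in\R^2$ be the corresponding critical values of $F$. Choose a system of vertical cuts emanating upward from each $m_i$ (the sign choice here will correspond to the choice of $\ep\in\{-1,+1\}^c$ in the V\~u Ng\d{o}c polygon construction), pick a boundary base point, and let $U$ be an open neighborhood in $F(M)$ of the boundary of $F(M)$ that is disjoint from the cuts. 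On $F^{-1}(U)$ the singular integral affine structure carries no monodromy, so there is a local toric momentum map $\Phi\colon F^{-1}(U)\to\R^2$ such that $\Phi=A\circ F$ on overlaps for some $A\in\mathrm{AGL}(2,\Z)$; the image $\Phi(F^{-1}(U))$ has piecewise linear boundary, and collecting its inward primitive integer normals in cyclic order starting from the base point yields an ordered tuple $(v_0,\ldots,v_{d-1})$. This is the candidate for $\helixmap(M,\om,F)$.

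Next I would address well-definedness. Two sources of ambiguity enter: (a) the local toric momentum map $\Phi$ is only unique up to $\mathrm{AGL}(2,\Z)$, which acts on the tuple $(v_0,\ldots,v_{d-1})$ by the common $\mathrm{GL}(2,\Z)$ linear part; (b) the choice of cut directions and base point. Changing the base point corresponds to a cyclic relabeling of the $v_i$, while flipping the direction of the $i$th cut slides a focus-focus value from one side of the cut to the other and, by the Duistermaat monodromy computation, conjugates the normals on one side of the affected position by the monodromy matrix $T$. By construction the equivalence relation defining the semitoric helix is generated exactly by these operations, so the class of $(v_0,\ldots,v_{d-1})$ in $\helixspace$ is well defined, which gives existence of $\helixmap(M,\om,F)$.

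Finally, I would establish invariance under semitoric isomorphism. Given $\psi\colon(M,\om,F)\to(M',\om',F')$ with $\psi^*(J',H')=(J,f(J',H'))$ and $\partial_y f\neq 0$, the map $\psi$ intertwines the $S^1$-actions and sends Liouville tori to Liouville tori preserving their period lattices, and the fiberwise change of the second coordinate by $f$ amounts to composition of the local toric momentum map with a matrix of the form $\bigl(\begin{smallmatrix}1&0\\ *&\pm 1\end{smallmatrix}\bigr)\in\mathrm{GL}(2,\Z)$ (glued across cuts). Pushing $\Phi$ forward by $\psi$ thus produces a local toric momentum map for $F'$ whose associated tuple of inward normals differs from that for $F$ only by the operations already absorbed by the equivalence relation in (a) and (b). Therefore $\helixmap(M,\om,F)=\helixmap(M',\om',F')$.

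The main obstacle I anticipate is step (b): carefully matching the effect of flipping a cut, especially when a focus-focus value lies near the chosen base point or near a vertex of the boundary, with exactly the $T$-conjugation relation built into the definition of $\helixspace$. This is where the novel semitoric feature, namely monodromy, interacts with the ambiguity in the local toric momentum map, and verifying that the equivalence relation on helices is neither too coarse nor too fine requires unwinding V\~u Ng\d{o}c's construction of $\varphi$ in a neighborhood of the cut and tracking the identification of period lattices across it.
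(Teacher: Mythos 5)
Your construction is not the paper's, and the difference is where the gap lies. The paper's intrinsic construction (Section~\ref{sec_intrinsicconstruction}) uses a \emph{single} cut $\ell$ that misses every focus-focus value, so $U$ is connected and simply connected, one straightening map exists on all of $F^{-1}(U)$ by Theorem~\ref{thm_sanstraightening}, and the total monodromy $T^c$ appears exactly once, between $v_0$ and $v_d$. The only ambiguities are then the choices of $U'$, $\ell$, and the straightening map, and Lemma~\ref{lem_sthelix_unique_mfd2} handles them by comparing two choices on the two components of $U_1\cap U_2$ and showing the two straightening maps differ by $T^{k_A}$ on one component and $T^{k_B}$ on the other with $k_B-k_A=c$ forced by the monodromy --- which is exactly what the relation $\sim$ of Section~\ref{sec_thesemitorichelix} absorbs. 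You instead use $c$ vertical cuts through the focus-focus values (closer to the polygon-based construction of Section~\ref{sec_polytohelix}); for $c\geq 2$ your $U$ is then disconnected, your ``local toric momentum map'' is really $c$ maps glued across the cuts by the individual monodromies, and flipping the direction of one cut multiplies only \emph{part} of the tuple $(v_0,\ldots,v_{d-1})$ by $T$. That operation is \emph{not} in the equivalence relation of Definition~\ref{def_sthelix}, which permits only a single global power of $T$ together with an index shift. So your claim that ``the equivalence relation defining the semitoric helix is generated exactly by these operations'' is false as stated: what must actually be proved is that the partial $T$-multiplication caused by flipping a cut is cancelled by the corresponding change in the per-cut monodromy correction, leaving the final tuple unchanged. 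You flag this as the anticipated obstacle but do not resolve it, and it is precisely the mathematical content of the well-definedness half of the proposition.

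The isomorphism-invariance step can also be done without tracking a $\mathrm{GL}(2,\Z)$ correction at all: as in the paper's proof, if $U$ is a helix neighborhood for $(M',\om',F')$ with straightening map $g$, then $\widehat{\phi}^{-1}(U)$ is a helix neighborhood for $(M,\om,F)$ and $g\circ\widehat{\phi}$ is a straightening map for it (periodicity of the second component follows because it is the pullback under the symplectomorphism $\phi$ of a function with $2\pi$-periodic flow), and its image is literally the set $g(U)$, so the inward normals coincide on the nose; uniqueness then finishes the argument. But this step leans entirely on the well-definedness statement above, so that is the gap to close --- either by switching to the single-cut construction and proving the analogue of Lemma~\ref{lem_sthelix_unique_mfd2}, or by explicitly verifying that your per-cut corrections make the output independent of the $\ep_j$'s and of the base point.
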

Proposition~\ref{prop_helixisinvrt} is proven in
Section~\ref{sec_intrinsicconstruction}.
There is a natural notion of blowup/down in this context which
we describe in detail in Section~\ref{sec_blowups}, and a symplectic semitoric
manifold is said to be \emph{minimal} if it does not admit a such a blowdown.
In~\cite{VN2007} it is shown that any symplectic semitoric manifold has only
finitely many focus-focus singular points.

\begin{theorem}\label{thm_mainintro}
 Let $(M,\om,F)$ be a compact symplectic semitoric manifold with $c>0$ focus-focus
 points and $d>0$ elliptic-elliptic points.
 If $d<5$ then $(M,\om,F)$
 is minimal if and only if
 its associated helix is one of:
\begin{center}
 \includegraphics[height = 180pt]{figures/classes}
\end{center} 
 with parameters in each case given by $(1)$ $c=1$; $(2)$ $c=2$; $(3)$ $k\neq\pm 2$, $c=1$; $(4)$ $c\neq 2$; $(5)$ $k\neq \pm 1, 0$, $c\neq 1$; $(6)$ $k\neq -1, 1-c$, $c>0$.
   If $d\geq 5$ then $(M,\om,F)$ is minimal
   if and only if $d>5$ and in this case the associated helix is
   completely determined by $c$ and a positively oriented 
   basis $(v_0, v_1)$ of $\Z^2$.
\end{theorem}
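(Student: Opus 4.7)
The plan is to reduce the geometric classification to an algebraic problem about words in $\sltz$ and then solve that problem using the normal form machinery developed in Sections~\ref{sec_algebraictech} and~\ref{sec_pltz}. By Proposition~\ref{prop_helixisinvrt} and the construction of the helix, the data of $\mathcal{H}$ consists of an ordered cyclic tuple $(v_0,\ldots,v_{d-1})$ of primitive vectors in $\Z^2$ attached to the $d$ elliptic\--elliptic vertices, together with the monodromy data at the $c$ focus\--focus fibers. Passing from $v_i$ to $v_{i+1}$ is encoded by a matrix of the form $ST^{a_i}$ in $\sltz$, so traversing the whole boundary produces the word $\sigma=ST^{a_0}\cdots ST^{a_{d-1}}$. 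The existence of a valid helix forces two constraints: the product $\sigma$ must be conjugate to $T^c$ in $\sltz$ (encoding the total monodromy contributed by the focus\--focus points), and the tuple must wrap around the origin exactly once, which translates into a winding\--number condition after lifting $\sigma$ to the group $G$ covering $\sltz$ inside the universal cover of $\sltr$.

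Minimality translates into a purely algebraic statement: the helix admits no blowdown precisely when no single letter $ST^{a_i}$ can be absorbed into its neighbors by one of the basic reduction moves of Section~\ref{sec_pltz}. The key structural result, to be established there, is that each element of $\psltz$ has a unique minimal\--length representative in the alphabet $\{ST^a : a\in\Z\}$, and any other representative reduces to it by a short sequence of local moves that strictly decrease the winding number. I would then observe that the conjugacy class of $T^c$ in $G$ has a very restricted set of minimal representatives, so candidate minimal helix words are forced to be obtainable from one of these by applying very few inverse reduction moves. For $d>5$ the winding number is large enough that no inverse move can be applied without creating a reducible letter, which pins down the answer uniquely in terms of $c$ and a positively oriented basis, giving the stated uniform description.

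The bulk of the case analysis is for small $d$: here the word $\sigma$ is short enough that several coincidences can occur, so I would enumerate possibilities case by case. For each $d\in\{1,2,3,4\}$, I would list the partitions of the required winding number into $d$ parts, impose the conjugacy constraint that $\sigma$ be conjugate to $T^c$, and discard tuples containing any $a_i\in\{-1,0,1\}$ or any other configuration triggering a reduction move; the surviving tuples should be exactly the six families labeled $(1)$--$(6)$ together with the stated parameter exclusions. The value $d=5$ must be singled out because the winding number and conjugacy constraints combine with minimality to rule out any solution, which is why the theorem states $d>5$ rather than $d\geq 5$.

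The main obstacles I anticipate are twofold. First, proving the uniqueness of minimal representatives in $\psltz$ and showing that the local moves are confluent requires care, since one must rule out long\--range cancellations; this is the technical heart of Section~\ref{sec_pltz}. Second, for the small\--$d$ enumeration, one must check exhaustiveness of the excluded\--parameter lists and verify that each surviving combinatorial helix is actually realized by a compact symplectic semitoric manifold, which I would handle by running the helix construction of Section~\ref{sec_geometry} backwards to produce an explicit model (a weighted projective space, a Hirzebruch\--type surface, or an appropriate semitoric blowup thereof) realizing each listed family.
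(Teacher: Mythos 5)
Your overall strategy matches the paper's: encode the helix as a word $\sigma=ST^{a_0}\cdots ST^{a_{d-1}}$ that must equal $S^4X^{-1}T^cX$ in the lifted group $G$ (conjugacy to $T^c$ plus winding number exactly one), develop a unique standard form in $\psltz$ with minimal winding number, and exploit the fact that a minimal helix word is only a few reduction moves away from that standard form. The realizability direction via reversing the construction (through the semitoric polygon classification) is also how the paper proceeds.

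However, there is a genuine error in the step where you extract the list $(1)$--$(6)$. You propose to ``discard tuples containing any $a_i\in\{-1,0,1\}$ or any other configuration triggering a reduction move.'' This conflates minimality with being in standard form. Minimality (no blowdown) only forbids $a_i=1$, because $v_{i+1}=v_i+v_{i+2}$ is equivalent to $a_i=1$ in the recurrence $a_iv_{i+1}=v_i+v_{i+2}$; it does \emph{not} forbid $a_i\in\{-1,0\}$ or the applicability of the other two reduction moves ($S^2\to I$ and $ST^{-n}S\to (TST)^n$). Indeed, the central point of the paper's argument (Lemmas~\ref{lem_wxpluswxbar} and~\ref{lem_XTXminimal}) is that $W(\sigma)-W(\overline{\sigma})$ equals $\tfrac12$ or $1$, so a minimal helix word is \emph{never} in standard form: exactly one or two of the winding-reducing moves \emph{must} be applicable to it. Concretely, your exclusion criterion would eliminate types $(1)$, $(3)$, $(4)$, $(5)$, $(6)$ and $(7)$ (e.g.\ type $(1)$ is $ST^{-1}ST^{-4}$ and type $(6)$ is $S^2T^aS^2T^{c-a}$, both containing letters in $\{-1,0\}$), leaving only type $(2)$. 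The correct criterion is: $a_i\neq 1$ for all $i$, \emph{and} $\sigma$ reduces to $\overline{\sigma}$ using exactly the small winding-number budget, which the paper then resolves by a case analysis on the conjugating element $X$ (whether $X=_{\psltz}T^k$, $X=_{\psltz}T^kST^a$, or neither) rather than by enumerating partitions for each $d\le 4$. Your partition-based enumeration also leaves the conjugacy constraint unaddressed in any effective way; fixing $d$, $c$ and the winding number only pins down $\sum a_i$, and deciding which of the remaining tuples are actually conjugate to $T^c$ with the right lift is where the real work lies.
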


A complete version of Theorem~\ref{thm_mainintro}
appears as Theorem~\ref{thm_classifyvectors}
and an example of a helix with $d>5$ is shown in Figure~\ref{fig_schematic}.
The example of the coupled angular momenta system, which is minimal of 
type~\eqref{type3} with $k=1$, is given in Section~\ref{sec_coupledspin}.
Theorem~\ref{thm_classifyvectors} has the following surprising
consequence in the study of symplectic semitoric manifolds, which
follows from Lemma~\ref{lem_Jmax} and is proven
in Section~\ref{sec_nonminhelices}.

\begin{theorem}\label{thm_Jmax}
 If a compact symplectic semitoric manifold has at least two focus-focus
 singular points and the component of the momentum map with
 periodic flow achieves its maximum and minimum at a single point each, then
 the system must have exactly two focus-focus points and be a minimal
 symplectic semitoric manifold of type~\eqref{type2} from Theorem~\ref{thm_classifyvectors}.
\end{theorem}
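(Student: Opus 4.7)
The plan is to use Lemma~\ref{lem_Jmax}, which I expect rephrases the hypothesis \emph{$J$ attains its maximum and minimum at a single point each} as the purely combinatorial condition that no inward normal $v_i$ of the helix $\helixmap(M,\om,F)$ equals $(1,0)$ or $(-1,0)$. The heuristic is that a horizontal inward normal corresponds to a vertical edge of the semitoric polygon, whose preimage under $F$ is a fixed $2$-sphere of the $S^1$-action along which $J$ is constant; the absence of such edges is exactly the isolatedness of the extremal $J$-fibers.

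Granting this, I first reduce to the minimal case. A blowdown replaces the helix by the sub-sequence obtained by deleting a single normal equal to the sum of its two neighbors, and preserves the focus-focus count $c$. So any minimal blowdown $(M',\om',F')$ of $(M,\om,F)$ inherits both $c\geq 2$ and the absence of horizontal normals, and by Lemma~\ref{lem_Jmax} in reverse satisfies the hypothesis of Theorem~\ref{thm_Jmax} itself.

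I then apply Theorem~\ref{thm_classifyvectors} to $(M',\om',F')$. Types~(1) and~(3) are excluded at once because they force $c=1$. For each of the finite families~(4), (5), and~(6) I would write out the four normals explicitly in terms of the parameter $k$ and verify that no allowed value of $k$ simultaneously eliminates both horizontal normals and preserves $c\geq 2$. The infinite $d>5$ family must be ruled out case-by-case via the rigid rotation rule of Theorem~\ref{thm_classifyvectors}: since the full helix is determined by $c$ and a positively oriented basis $(v_0,v_1)$, one need only check that any valid choice of these parameters produces at least one normal equal to $(\pm 1,0)$. The unique survivor is type~(2) with $c=2$.

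The final step is to upgrade from ``some blowdown is type~(2)'' to ``$(M,\om,F)$ itself is type~(2).'' For this I would list the four corners of the type-(2) helix together with their adjacent primitive vectors and verify, corner by corner, that the primitive vector inserted by a blowup is either equal to $(\pm 1,0)$ — immediately violating Lemma~\ref{lem_Jmax} — or incompatible with the constraint that the associated $\sltz$ word remain conjugate to $T^2$. I expect the main obstacle to be exactly this final step, together with the $d>5$ elimination: both require careful explicit bookkeeping with the normals and with the algebraic framework of Section~\ref{sec_algebraictech}.
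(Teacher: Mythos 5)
Your outline follows the paper's proof essentially verbatim: translate the unique-extremum hypothesis into the absence of a horizontal normal in the helix, pass to the minimal model (blowups only insert vectors and preserve $c$), eliminate every minimal type of complexity $\geq 2$ except~\eqref{type2} because each contains $\left(\pm 1,0\right)$ --- the $d>5$ elimination you flag as the main obstacle is exactly Corollary~\ref{cor_formof7}\eqref{item_formof72}, already proved in the paper --- and observe that both possible blowups of the type-\eqref{type2} helix insert $\left(\pm 1,0\right)$, so the manifold itself must already be minimal of that type. Two small corrections: Lemma~\ref{lem_Jmax} does not state the geometric-to-combinatorial translation you attribute to it (that step is carried out directly in the proof via the straightening map); it states that every helix of complexity $c>2$ contains $\left(\pm 1,0\right)$, and the type-\eqref{type2} helix has length $2$, hence two corners per period rather than four.
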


In~\cite[Theorem 3]{VN2007} V\~{u} Ng\d{o}c uses an argument related to the
Duistermaat-Heckman measure on symplectic semitoric manifolds to prove
that there do not exist noncompact symplectic semitoric manifolds for 
which the component of the momentum map
with periodic flow achieves its maximum and minimum at a single
point each and which have more than one focus-focus point.
Recently, S.~Sabatini drew to our attention that she had announced 
a version of Theorem~\ref{thm_Jmax}
at a conference in 2013 and outlined a different proof from the one given
in the present paper.

\section{Background: Minimal symplectic toric manifolds}
\label{sec_background}

Here we give a pedestrian exposition of toric manifolds
and integrable systems
from the point of view of symplectic geometry.

An \emph{integrable system} is a triple $(M,\om,F)$ where
$(M,\om)$ is a $2n$\--dimensional symplectic manifold and
$F = (f_1, \ldots, f_n) \colon M\to\R^n$
is a smooth map such that its components $f_1, \ldots, f_n$
Poisson commute and are independent almost
everywhere.  That is, $\om(\mathcal{X}_{f_i}, \mathcal{X}_{f_j}) = 0$
for all $i,j = 1, \ldots, n$ and $(\mathcal{X}_{f_1})_p, \ldots, (\mathcal{X}_{f_1})_p$
are linearly independent in $\mathrm{T}_p M$ for almost all
$p\in M$, where $\mathcal{X}_{f_i}$ denotes the Hamiltonian
vector field of $f_i$.
\begin{definition}
A \emph{symplectic toric manifold}
is an integrable system $(M,\om,F)$ such that
$(M,\om)$ is a compact and connected $2n$\--dimensional
symplectic manifold, each
$\mathcal{X}_{f_i}$ has $2\pi$\--periodic flow,
and the $\T^n$\--action produced by these flows
is effective.
\end{definition}

A convex, compact, rational polygon in $\R^2$ is a \emph{Delzant polygon} if the collection
of inwards-pointing integer normal vectors to the polygon
of minimal length
form what is known as a toric fan.  For vectors $v,w\in\Z^2$ let
$\det(v, w)$ denote the determinant of the matrix with columns
$v, w$.
\begin{definition}
 A \emph{toric fan} of length $d\in\Z_{>0}$ is a collection of vectors
 $(v_0, \ldots, v_{d-1})\in(\Z^2)^d$ such that
 \begin{enumerate}[nosep]
  \item $\det(v_i, v_{i+1}) = 1$ for $i=0, \ldots, d-1$ where $v_d := v_0$;
  \item $v_0, \ldots, v_{d-1}$ are arranged in counter-clockwise
    order.
 \end{enumerate}
\end{definition}
\vspace{-12pt}
Associated to each toric manifold is a toric fan, formed from
the Delzant polygon in this way.

\begin{definition}
If $(v_0, \ldots, v_{d-1})$ is a toric fan of length
$d$ such that
$
 v_i = v_{i-1} + v_{i+1}
$
then a new toric fan of length $d-1$ can be produced by removing $v_i$.  This operation is known as the \emph{blowdown}
and the inverse operation, inserting the sum of two adjacent vectors, is known as a
\emph{blowup}.
\end{definition}

\begin{definition}
A toric fan is \emph{minimal} if
$
 v_i \neq v_{i-1} + v_{i+1}
$
for $i = 0, \ldots, d-1$.
\end{definition}
Minimal toric fans are those on which a blowdown
cannot be performed. A toric fan can be reduced to a minimal toric fan
by performing blowdowns until no more are possible.  On the other hand, this
implies that any toric fan may be obtained from a minimal toric fan by a finite
sequence of blowups.
Minimal toric manifolds are those that do not admit a symplectic toric
blowdown (see Section~\ref{sec_blowups}).
\begin{prop}[\cite{Fulton1993}]
 A blowup/down on a fan corresponds to a blowup/down on the associated
 toric manifold.  In particular, a toric manifold is minimal
 if and only if its fan is minimal.
\end{prop}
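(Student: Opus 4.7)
The plan is to leverage Delzant's classification theorem to reduce the claim to a local statement near a $T$\--fixed point, and then identify the local model of the equivariant symplectic blowup with the corner\--chopping operation on the polygon. Throughout, we use that the toric fan of $(M,\omega,F)$ is nothing other than the collection of primitive inward\--pointing integer normals to the edges of the Delzant polygon $\Delta = F(M)$, listed in counterclockwise order.

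First I would recall that the $T$\--fixed points of $M$ are in bijection with the vertices of $\Delta$, and that at a vertex where edges with normals $v_{i-1}$ and $v_{i+1}$ meet, the equivariant Darboux\--type normal form near the corresponding fixed point $p$ identifies a $T$\--invariant neighborhood of $p$ with a neighborhood of the origin in $\C^2$ equipped with the standard $\T^2$\--action (in coordinates dictated by the basis $(v_{i-1}, v_{i+1})$, which is a $\Z$\--basis of $\Z^2$ precisely because $\det(v_{i-1}, v_{i+1}) = 1$). The equivariant symplectic blowup of $\C^2$ at the origin of small size $\varep > 0$ is the standard construction: one removes a small ball and collapses the boundary sphere along the Hopf fibration, producing an exceptional divisor $\C P^1$. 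A direct computation with the standard momentum map on $\C^2$ shows that the resulting polygon has the corner at $F(p)$ truncated by a line whose primitive inward normal is exactly $v_{i-1} + v_{i+1}$.

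Next I would patch this local picture back into the global polygon: for $\varep$ sufficiently small the modified polygon $\Delta'$ is still Delzant (the new vertices satisfy the smoothness condition, since $\det(v_{i-1}, v_{i-1}+v_{i+1}) = \det(v_{i-1},v_{i+1}) = 1$ and similarly on the other side), its fan is $(v_0, \ldots, v_{i-1}, v_{i-1}+v_{i+1}, v_{i+1}, \ldots, v_{d-1})$, and by Delzant's theorem the associated symplectic toric manifold is exactly the equivariant symplectic blowup of $(M,\omega,F)$ at $p$. Conversely, if the fan of $M$ contains a triple $v_{i-1}, v_i, v_{i+1}$ with $v_i = v_{i-1}+v_{i+1}$, then the edge of $\Delta$ with normal $v_i$ can be shrunk to a point without violating the Delzant condition, producing a polygon with one fewer vertex; the corresponding toric manifold is a symplectic toric blowdown of $M$ along the $T$\--invariant sphere lying over that edge. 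Because blowup and blowdown are inverse operations on both sides, the correspondence of the first sentence of the proposition is complete.

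The second sentence is then immediate: $M$ admits a toric blowdown precisely when some edge of $\Delta$ arises as an exceptional divisor, which happens precisely when some $v_i = v_{i-1} + v_{i+1}$, i.e.\ precisely when the fan is non\--minimal. The only subtle point in the whole argument is checking that the equivariant symplectic blowup introduces exactly the normal $v_{i-1} + v_{i+1}$ (rather than some other positive combination); this is the part that requires the explicit local computation in $\C^2$, and is the one nontrivial step hidden in the appeal to the Delzant correspondence.
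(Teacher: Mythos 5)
The paper offers no proof of this proposition at all: it is attributed to Fulton~\cite{Fulton1993} and used as a black box (the only related text in the paper is the recollection of the local blowup model $\widetilde{\mathbb{C}}^2$, $L(\delta)$, $\rho(\lambda)$ in Section~\ref{sec_blowups}, and a one-line appeal to this very proposition when the analogous statement for helices is proved). Your argument supplies the standard proof that the citation stands in for, and it is sound: the equivariant Darboux normal form at a fixed point, the explicit computation that blowing up the origin of $\mathbb{C}^2$ of size $\varepsilon$ truncates the quadrant by a line with inward normal $(1,0)+(0,1)$, the check that the truncated polygon is again Delzant via $\det(v, v+w)=\det(v,w)=1$, and the appeal to the uniqueness half of Delzant's theorem to identify the resulting manifold with the blowup. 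You also correctly isolate the one genuinely computational step (that the new normal is exactly the sum of the two adjacent normals, which follows because normals transform linearly under the $\mathrm{SL}_2(\Z)$ change of basis identifying the local model with the corner). Two small points of hygiene: your indexing is internally inconsistent at the start --- in the original fan the two edges meeting at a vertex carry \emph{consecutive} normals $v_i, v_{i+1}$, so writing the corner as ``$v_{i-1}$ and $v_{i+1}$'' only makes sense if you are pre-labelling by the post-blowup indices (which is evidently your intent, given the blowdown condition $v_i=v_{i-1}+v_{i+1}$); and in the blowdown direction the polygon is not literally ``shrunk'' --- one extends the two adjacent edges until they meet --- but the Delzant check $\det(v_{i-1},v_{i+1})=\det(v_{i-1},v_i)=1$ you give is the right one. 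Neither affects correctness.
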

Minimal toric fans were classified in~\cite{Fulton1993},
and this implies a classification of minimal toric manifolds.
The group $\sltz$ acts on a toric fan by acting on each
vector in the fan.

\begin{theorem}[Fulton~\cite{Fulton1993}]
\label{thm_toricminimal}
 A toric manifold is minimal if and only if its fan is one of the following up
 to the action of $\sltz$:
 \begin{enumerate}[font=\normalfont]
 \item $v_0 = \vect{1}{0}$, $v_1 = \vect{0}{1}$, $v_2 = \vect{-1}{-1}$\rm{;}
 \item $v_0 = \vect{1}{0}$, $v_1 = \vect{0}{1}$, $v_2 = \vect{-1}{0}$, $v_3 = \vect{0}{-1}$\rm{;}
 \item $v_0 = \vect{1}{0}$, $v_1 = \vect{0}{1}$, $v_2 = \vect{-1}{k}$, $v_3 = \vect{0}{-1}$ for $k\in\Z$, $k\neq 0, \pm 1$.
\end{enumerate}
\end{theorem}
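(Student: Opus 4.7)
The ``if'' direction is a direct verification that in each listed fan one has $a_i \neq 1$ for every $i$, so no blowdown is available. The content is the ``only if'' direction, for which I would reduce the classification to an integer combinatorial problem via $\sltz$-normalization and then rule out fans of length $\geq 5$ by combining arithmetic and geometric constraints.

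First I would apply the $\sltz$-action to normalize $v_0 = \vect{1}{0}$ and $v_1 = \vect{0}{1}$, which is possible since $\det(v_0, v_1) = 1$. For each $i$ the pair $\{v_{i-1}, v_i\}$ is a $\Z$-basis of $\Z^2$, and the determinant condition $\det(v_i, v_{i+1}) = 1$ forces $v_{i+1} = a_i v_i - v_{i-1}$ for a unique integer $a_i$. The fan is thus encoded by a sequence $(a_0, \ldots, a_{d-1}) \in \Z^d$ subject to closure $v_d = v_0$, $v_{d+1} = v_1$, and the blowdown relation $v_{i-1} + v_{i+1} = v_i$ is precisely $a_i = 1$, so minimality means $a_i \neq 1$ for all $i$.

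For $d = 3$ the closure conditions uniquely force $v_2 = \vect{-1}{-1}$, yielding case (1). For $d = 4$ the closure equations together with $a_i \neq 1$ force (after cyclic reindexing) some $a_i = 0$, and solving the remaining linear equations recovers case (2) when the remaining free parameter also vanishes and case (3) with $k \neq 0, \pm 1$ otherwise (the values $k = \pm 1$ would force a blowdown at a neighboring index and $k = 0$ collapses to case (2)).

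The main obstacle is excluding $d \geq 5$. I would combine three facts: (i) the identity $\sum_i a_i = 3d - 12$, which follows from Noether's formula $K_M^2 + \chi(M) = 12$ applied to the associated smooth complete toric surface with $D_i^2 = -a_i$, $D_i \cdot D_{i+1} = 1$, $\chi(M) = d$, and $K_M = -\sum_i D_i$; (ii) a turning-angle bound: writing $\theta_i \in (0, \pi)$ for the counterclockwise angle from $v_i$ to $v_{i+1}$, the identity $a_i = |v_{i-1}|\,|v_{i+1}|\,\sin(\theta_{i-1} + \theta_i)$ yields $a_i \leq 0$ iff $\theta_{i-1} + \theta_i \geq \pi$, and since $\sum_i \theta_i = 2\pi$ and each $\theta_k$ appears in at most two of these two-term sums, at most four of the $a_i$ can be $\leq 0$; and (iii) a growth obstruction: if all $a_i \geq 2$, the recursion $v_{i+1} = a_i v_i - v_{i-1}$ together with the triangle inequality makes $|v_i|$ strictly grow, precluding closure. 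Under minimality each $a_i$ is $\leq 0$ or $\geq 2$, so (ii) and (iii) confine the possible sign patterns to a finite list for each $d \geq 5$, and (i) together with the rigid closure equations (two independent polynomial relations on $(a_0, \ldots, a_{d-1})$) eliminates each remaining candidate. The delicate step is the ``mixed'' regime in which a few strongly negative $a_i$ balance several mildly positive ones; here the finiteness from (i)--(iii) is what makes the case analysis terminate, and the full matrix closure, which cuts out a very low-dimensional locus in $\Z^d$, rules out each surviving candidate.
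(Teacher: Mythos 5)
Your normalization, the encoding of the fan by the integers $(a_0,\dots,a_{d-1})$, and the treatment of $d=3,4$ are all fine, and each of your three ingredients is a true statement. In fact your identity (i), $\sum_i a_i = 3d-12$, is exactly the assertion $W(ST^{a_0}\cdots ST^{a_{d-1}})=1$ for the winding-number homomorphism $W$ of Section~\ref{sec_algebraictech}, since $W(ST^{a_0}\cdots ST^{a_{d-1}})=\frac{d}{4}-\frac{1}{12}\sum_i a_i$ and the closure condition reads $ST^{a_0}\cdots ST^{a_{d-1}}=_{G}S^4$. (For the record, the paper does not prove this theorem at all --- it is quoted from Fulton --- so the only meaningful comparison is with the algebraic machinery the paper develops for the semitoric analogue, which subsumes the toric case via \cite[Lemma 4.8]{KaPaPe2016} in Case~I of the proof of Lemma~\ref{lem_classify}.)

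The genuine gap is in the exclusion of $d\geq 5$. Facts (i)--(iii) do \emph{not} confine the candidates to a finite list for each $d$: (ii) bounds the number of indices with $a_i\leq 0$ by four and minimality forces the remaining $a_i$ to be $\geq 2$, so the \emph{sign patterns} are finite, but nothing bounds the nonpositive entries from below --- the sum rule (i) only says the positive entries compensate them, so within a fixed sign pattern there remain infinitely many tuples. (The Hirzebruch family $(0,k,0,-k)$ at $d=4$ already shows that (i)--(iii) plus closure admit unbounded solutions, so these constraints alone cannot distinguish $d=4$ from $d\geq 5$.) Your final step, that ``the rigid closure equations rule out each surviving candidate,'' is therefore not an argument but a restatement of the theorem: the entire content is that the relation $ST^{a_0}\cdots ST^{a_{d-1}}=_{\sltz}I$ with winding number one has no solution of length $\geq 5$ avoiding $a_i=1$, and this must be established uniformly in $d$ and in the unbounded negative entries. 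What is missing is a monotonicity-plus-uniqueness mechanism of the kind the paper builds: the empty word is the unique standard form of the identity (Theorem~\ref{thm_standardpsltz}, resting on Lemma~\ref{lem_ai} about the action on $\R\cup\{\infty\}$), it has winding number $0$, each of Reductions 1 and 2 lowers $W$ by exactly $\frac{1}{2}$ while a blowdown preserves $W$, so a minimal word equal to $S^4$ must collapse to the empty word after exactly two winding-reducing moves with no blowdown available at the start --- and that forces $d\leq 4$ and the three listed families. Without a statement playing that role, your case analysis does not terminate.
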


\begin{figure}
 \centering
 \includegraphics[height = 120pt]{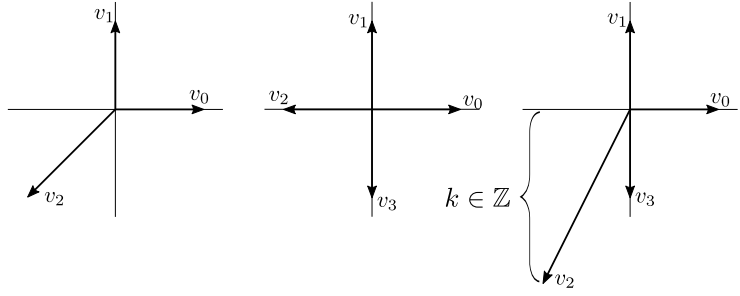}
 \caption{The three possible minimal toric fans
 (up to $\sltz$) listed
 in Theorem~\ref{thm_toricminimal}, where $k\in\Z$
 is the parameter for the Hirzebruch trapezoid and in the figure
 we show the case of $k=-2$.}
 \label{fig_toricfans}
\end{figure}

These fans are shown in Figure~\ref{fig_toricfans}.  
Respectively, these are known as the Delzant triangle, the square, and the Hirzebruch trapezoid named for the shapes of their associated Delzant polygons.
They
correspond, in order, to $\mathbb{C} \mathbb{P}^2$,
$\mathbb{C} \mathbb{P}^1\times\mathbb{C} \mathbb{P}^1$, and a Hirzebruch surface.

\section{The semitoric helix}
\label{sec_thehelix}

\subsection{Toric blowups/downs for symplectic toric and semitoric manifolds}
\label{sec_blowups}

Let $(M,\om,F)$ be a symplectic toric or symplectic semitoric $4$\--manifold with $p\in M$
an elliptic-elliptic point.  Then there exists complex coordinates
$z_1, z_2$ in an open chart $U\subset M$ centered at $p$ such that
the symplectic form is given by
$
 \om_0 = \frac{-\mathrm{i}}{2} \big(\mathrm{d}z_1\wedge\mathrm{d}\overline{z}_1 + \mathrm{d}z_2\wedge\mathrm{d}\overline{z}_2\big)
$
and 
$
 F(z_1, z_2) = \frac{1}{2}\left(\abs{z_1}^2+\abs{z_2}^2\right) + F(0,0).
$
Let $\phi\colon U\to\mathbb{C}^2$ denote the map $\phi = (z_1,z_2)$ and let $V = \phi(U)$.
Let $\mathrm{B}^4(r)\subset\mathbb{C}^2$ denote the standard ball of radius $r>0$.
For any $\la>0$ sufficiently small such that $\mathrm{B}^4(\la)\subset V$
we can define locally in this chart the
toric blowup of weight $\la$.  Since $p$ is an elliptic-elliptic
point this must be possible for some $\la>0$.

Following~\cite[Section 7.1]{McDuffSalamon} define 
$
 \widetilde{\mathbb{C}}^2\subset \mathbb{C}^2\times\mathbb{C}\mathbb{P}^1
$
to be those pairs $(z,\ell)$ such that $z\in\ell$.  That is,
\[
 \widetilde{\mathbb{C}}^2 = \{(z_1, z_2; [w_0, w_1]) \mid w_j z_k = w_{k-1}z_{j+1}\textrm{ for }j = 0,1 \textrm{ and } k = 1,2\}
\]
(the manifold $\widetilde{C}^2$ is the usual (non-symplectic) blowup
of $\mathbb{C}^2$ at the origin).
There are natural projections
\begin{equation*}
 \begin{tikzcd}
   & \widetilde{\mathbb{C}}^2 \ar{dl}[swap]{\pi_{\mathbb{C}^2}}\ar{dr}{\pi_{\mathbb{C}\mathbb{P}^1}}& \\
  \mathbb{C}^2 & & \mathbb{C}\mathbb{P}^1
 \end{tikzcd}
\end{equation*}
and for each $r>0$ define
$
 L(r) = \pi_{\mathbb{C}^2}^{-1}(\mathrm{B}^4(r)).
$
For each $\la>0$ define a symplectic form $\rho(\la)$ on $\widetilde{C}^2$ by
$
 \rho(\la) = \pi_{\mathbb{C}^2}^*\om_0 + \la^2 \pi_{\mathbb{C}\mathbb{P}^1}^*\om_{\mathrm{FS}}
$
where $\om_{\mathrm{FS}}$ is the Fubini-Study form on $\mathbb{C}\mathbb{P}^1$
and $\om_0$ is the standard symplectic form on $\mathbb{C}^2$.
Finally, with $\la$ and $\de$ chosen small enough so that $\mathrm{B}^4(\sqrt{\la^2+\de^2})\subset V$, define
$
 \widetilde{\mathbb{C}^2}_{\la} = \left(\mathbb{C}^2 \setminus B^4\left(\sqrt{\la^2 + \de^2}\right)\right)\cup L(\delta).
$
Since $\rho(\la) = \om_0$ outside of $\mathrm{B}^4(\sqrt{\la^2+\de^2})$ there is no problem
defining a symplectic structure on
$
 \widetilde{M}{(p,\la)} = (M \setminus \phi^{-1}(B^{4}(\sqrt{\la^2+\de^2})))\cap L(\delta),
$
which is known as the \emph{symplectic toric blowup of $M$ at $p$ of size $\la$}.
This is similar to the standard symplectic blowup except that the choice
of chart forces the embedded ball used in this construction to be
$\R^2$\--equivariantly embedded, where the $\R^2$\--action 
on $M$ comes from
the flow of $\mathcal{X}_{F_1}$ and $\mathcal{X}_{F_2}$, which
descends to a $\T^2$\--action for symplectic toric manifolds and an
$(\mathbb{S}^1\times\R)$\--action for symplectic semitoric manifolds
(see~\cite{FiPaPe2016} for an investigation of
symplectic semitoric manifolds as symplectic $(S^1\times\R)$\--manifolds).
We will not show that
this construction is independent of the choices involved because this
is a standard fact (again, see~\cite{KaKePi2007, McDuffSalamon}).

The inverse of this operation
is known as a \emph{toric blowdown}.  Performing a
toric blowup or down on a toric manifold corresponds to
performing a blowup or down on the associated toric fan. 
We will see that performing a toric blowup/down on a symplectic semitoric manifold
corresponds to performing a combinatorial operation, which
we call a blowup/down, on the assocaited
semitoric helix (see Section~\ref{sec_thesemitorichelix}).
We will often simply call a toric blowup a \emph{blowup} (and similar for
a \emph{blowdown}).

\begin{definition}
A symplectic semitoric manifold $(M,\om,F)$ is \emph{minimal}
if it does not admit a blowdown.
\end{definition}

That is, a symplectic semitoric manifold is minimal
if there does not exist any symplectic semitoric manifold
$(M',\om',F')$ such that
$(M,\om,F)$ can be obtained from $(M',\om',F')$ by a symplectic
blowup.

For the present paper we will not be concerned with the
size of the blowups since this will not change the associated
helix and will not effect whether or not the resulting manifold
is minimal.  Thus, we will often say "the blowup of $M$ at $p$"
to really mean "one of the blowups of $M$ at $p$" or even
"the family of all manifolds which can be obtained by performing
a blowup of some weight on $M$ at $p$."

\begin{remark}
 This definition of blowup/down can be extended
 to be used around any
 completely elliptic point of any integrable system
 of any dimension.
\end{remark}

\subsection{The semitoric helix}
\label{sec_thesemitorichelix}

 Let $(\Z^2)^\infty$ be the space of sequences indexed by $\Z$ in $\Z^2$.
 For $\{v_i\}_{i\in\Z}, \{w_i\}_{i\in\Z}\in(\Z^2)^\infty$
 let $\sim$ be the equivalence relation on $(\Z^2)^\infty$
 given by
 $\{v_i\}_{i\in\Z}\sim \{w_i\}_{i\in\Z}$ if and
 only if there exists $k,\ell\in\Z$ such that
 \[v_i = \matr{1}{1}{0}{1}^k w_{i+\ell}\]
  for all $i\in\Z$.
 Let $[\{v_i\}_{i\in\Z}]\in(\Z^2)^\infty/\sim$ denote the equivalence class
 of $\{v_i\}_{i\in\Z}\in (\Z^2)^\infty$.
\begin{definition}\label{def_sthelix}
 A \emph{semitoric helix} is a triple $\mathcal{H} = (d, c, [\{v_i\}_{i\in\Z}])$
 where $d\in\Z_{>0}$, $c\in\Z_{\geq 0}$,
 and $[\{v_i\}_{i\in\Z}] \in (\Z^2)^\infty/\sim$
 such that:
 \begin{enumerate}
  \item $\det(v_i, v_{i+1})=1$ for all $i\in\Z$;
  \item $v_0, \ldots, v_{d-1}$ are arranged in counter-clockwise order;
  \item \label{part_extend} $\matr{1}{c}{0}{1} v_i = v_{i+d}$ for all $i\in\Z$.
 \end{enumerate}
 We say that a semitoric helix $(d,c,[\{v_i\}_{i\in\Z}])$ has \emph{length} $d$
  and \emph{complexity} $c$.
 It is \emph{minimal} if
 \[
  v_{i} \neq v_{i-1} + v_{i+1}
 \]
 for all $i\in\Z$.
\end{definition}

\begin{lemma}
The minimality condition does not depend on the
choice of representative of $[\{v_i\}_{i\in\Z}]$.
\end{lemma}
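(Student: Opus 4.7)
The plan is to unfold the equivalence relation and observe that the minimality condition $v_i \neq v_{i-1} + v_{i+1}$ is preserved under both operations that generate $\sim$: shifting the index, and applying a power of $T = \begin{pmatrix} 1 & 1 \\ 0 & 1 \end{pmatrix}$ to every vector in the sequence.

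Concretely, suppose $\{v_i\}_{i\in\Z}\sim\{w_i\}_{i\in\Z}$, so that there exist $k,\ell\in\Z$ with $v_i = T^k w_{i+\ell}$ for every $i\in\Z$. Since $T^k$ is a linear map, I would compute
\[
 v_{i-1} + v_{i+1} = T^k w_{i+\ell-1} + T^k w_{i+\ell+1} = T^k\bigl( w_{i+\ell-1} + w_{i+\ell+1}\bigr),
\]
while $v_i = T^k w_{i+\ell}$. Because $T^k\in\sltz$ is invertible, the equality $v_i = v_{i-1}+v_{i+1}$ is then equivalent to $w_{i+\ell} = w_{i+\ell-1}+w_{i+\ell+1}$.

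As $i$ ranges over $\Z$, the index $j := i+\ell$ also ranges over all of $\Z$, so the condition ``$v_i = v_{i-1}+v_{i+1}$ for some $i\in\Z$'' holds if and only if ``$w_j = w_{j-1}+w_{j+1}$ for some $j\in\Z$''. Negating, the sequence $\{v_i\}$ is minimal if and only if $\{w_i\}$ is, which shows minimality depends only on the class $[\{v_i\}_{i\in\Z}]$.

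The argument is entirely routine once the equivalence relation is written out; there is no real obstacle, since the two generators of $\sim$ (index translation and multiplication by $T^k$) each manifestly commute with the linear relation $x_i = x_{i-1}+x_{i+1}$. I would keep the proof to just a few lines in the paper.
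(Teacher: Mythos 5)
Your proof is correct and follows essentially the same route as the paper: unfold the equivalence relation, apply the invertible linear map $T^{-k}$ to transfer the relation $v_i = v_{i-1}+v_{i+1}$ to the shifted sequence, and note the index shift is a bijection of $\Z$. No issues.
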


\begin{proof}
Let
$\{w_i\}_{i\in\Z}\in[\{v_i\}_{i\in\Z}]$ so there exists $k,\ell\in\Z$
such that $v_i = \matr{1}{k}{0}{1} w_{i+\ell}$ for all $i\in\Z$ and thus
$
 v_j = v_{j-1} + v_{j+1}
$
if and only if
$
 w_{j+\ell} = w_{j+\ell-1} + w_{j+\ell+1}
$
by applying $\matr{1}{-k}{0}{1}$.
\end{proof}
A minimal semitoric helix is shown in Figure~\ref{fig_schematic}.
In light of item~\eqref{part_extend}, a semitoric helix of given
complexity $c>0$ and length $d$ is determined by any
$d$ consecutive vectors in any representative.

\begin{definition}
 Let $\mathcal{H} = (d,c,[\{v_i\}_{i\in\Z}])$ be a semitoric helix.
 The \emph{blowup of $\mathcal{H}$ at $v_i$} is the helix 
 $(d+1, c, [\{w_i\}_{i\in\Z}])$,
 where $\{w_i\}$ is formed from $\{v_i\}_{i\in\Z}$ by
 inserting $v_{i+kd} + v_{i+1+kd}$
 between $v_{i+kd}$ and $v_{i+1+kd}$ for all $k\in\Z$.  
 If $v_j=v_{j-1}+v_{j+1}$ for some $j\in\Z$
 then the \emph{blowdown of $\mathcal{H}$
 at $v_i$} is the helix $(d-1,c,[\{u_i\}_{i\in\Z}])$
 where $\{u_i\}$ is produced by
 removing $\{v_{j+nd}\}_{n\in\Z}$ from $\{v_i\}_{i\in\Z}$.
\end{definition}

\begin{figure}
 \centering
 \includegraphics[height = 130pt]{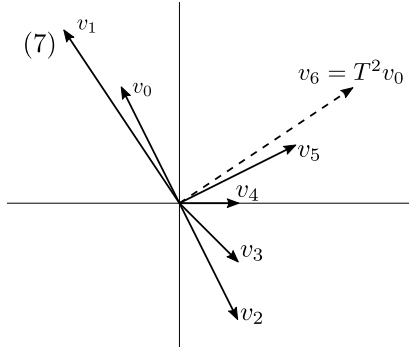}
 \caption{A minimal semitoric helix of length 6
   and complexity 2.
   In the classification from
   Theorem~\ref{thm_classifyvectors} this is a
   type~\eqref{type7} minimal semitoric helix with
   $A_0 = ST^2 ST^2$.}
 \label{fig_schematic}
\end{figure}

\subsection{Outline of helix construction}
\label{sec_helixconstructoutline}

Let $(M,\om,F = (J,H))$ be a compact symplectic semitoric manifold with $c\in\Z$
focus-focus singular points (symplectic semitoric manifolds always have
finitely many focus-focus points~\cite{VN2007}).
Take $U\subset F(M)$ to be a small open neighborhood of the boundary
of $F(M)$ minus a straight line segment $\ell$ which has its endpoints outside
of $U$ and exactly one endpoint in $F(M)$, so that $U$ is simply connected.
The set $F(M)$ is simply connected by~\cite[Theorem 3.4]{VN2007}.
Since $U$ is simply connected the fibers of $F$ form a trivial
torus fibration of $F^{-1}(U)$ so there exists a toric momentum map
$F_{\textrm{toric}}$
on $F^{-1}(U)$
which has the same first component and
associated singular Lagrangian fibration as $F$ (as in~\cite{VN2007}), and
$F_{\mathrm{toric}}(F^{-1}(U))$ minus its interior is
a connected union of line segments in $\R^2$.
Taking the inwards
pointing normal vectors of this segment forms the vectors
$v_0, \ldots, v_d$ and these vectors, along with the integer
$c$, determine the helix of length $d$ associated to $(M,\om,F)$,
which we denote $\helixmap(M,\om,F)$
and which is known as the semitoric helix associated to $(M,\om,F)$.
The precise construction procedure is in Section~\ref{sec_geometry}.

\begin{lemma}\label{lem_sthelix_unique_mfd}
 Given a symplectic semitoric manifold $(M,\om,F)$ there exists exactly
 one associated semitoric helix.
\end{lemma}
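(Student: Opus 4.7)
The construction of the semitoric helix outlined in Section~\ref{sec_helixconstructoutline} involves three non-canonical choices: the neighborhood $U$ of $\partial F(M)\setminus\ell$, the cut $\ell$, and the toric momentum map $F_{\mathrm{toric}}$ on $F^{-1}(U)$. My plan is to establish that each of these choices is well-defined modulo the equivalence relation $\sim$ defining the semitoric helix.

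First, I would address the choice of $U$. Given two valid neighborhoods $U_1, U_2$ (with the same cut $\ell$), their intersection is again simply connected and still a neighborhood of $\partial F(M)\setminus\ell$. The inwards-pointing normal vectors to the piecewise linear boundary of $F_{\mathrm{toric}}(F^{-1}(U))$ are determined by the germ of $F_{\mathrm{toric}}$ along $\partial F(M)$, so shrinking $U$ does not change the resulting list of vectors. Thus independence reduces to the other two choices.

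Next, with $U$ and $\ell$ fixed, I would show that any two toric momentum maps $F_{\mathrm{toric}}=(J,H_{\mathrm{toric}})$ and $F'_{\mathrm{toric}}=(J,H'_{\mathrm{toric}})$ sharing $J$ as first component and inducing the same singular Lagrangian fibration as $F$ differ by $H'_{\mathrm{toric}}-H_{\mathrm{toric}}=g\circ J$ for some smooth $g$. Since both second components must generate a $2\pi$-periodic flow that descends to the same torus fibration, $g'$ must be integer-valued, hence $g(J)=kJ+\mathrm{const}$ for some $k\in\Z$. This means $F'_{\mathrm{toric}}=\bigl(\begin{smallmatrix}1 & 0\\k & 1\end{smallmatrix}\bigr)F_{\mathrm{toric}}$, so the images differ by this linear map, and the inwards normal vectors transform by the transpose-inverse $\bigl(\begin{smallmatrix}1 & k\\ 0 & 1\end{smallmatrix}\bigr)=T^k$. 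This produces exactly the relation $v_i=T^k w_i$ of $\sim$.

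Finally, I would handle the choice of cut $\ell$. For two cuts $\ell_1,\ell_2$ that can be continuously deformed into each other without crossing any focus-focus value, the corresponding toric momentum maps agree (up to a $T^k$ ambiguity already handled above) on the overlap, and the resulting boundary normal vectors differ only by where the indexing begins, i.e.\ a cyclic shift by some $\ell\in\Z$. If moving the cut causes it to pass across a focus-focus value, the monodromy of the singular Lagrangian fibration around that value contributes an additional factor of $T$ to the transition between the two toric momentum maps, which combines with the cyclic shift to yield the relation $v_i=T^k w_{i+\ell}$. Traversing all $c$ focus-focus values amounts to total monodromy $T^c$, which is precisely condition~\eqref{part_extend} of Definition~\ref{def_sthelix} and ensures the indexing extends consistently to all of $\Z$.

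The main technical obstacle is the last step: carefully tracking how the toric momentum map transforms by $T$ each time the cut crosses a focus-focus monodromy line, and verifying that the combined effect of a cyclic shift by $\ell$ with a monodromy factor $T^k$ is exactly the equivalence $\sim$. With this bookkeeping in place, the three independence statements together yield a well-defined element $\helixmap(M,\om,F)\in(\Z^2)^\infty/\sim$, completing the proof.
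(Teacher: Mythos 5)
Your proposal is correct and follows essentially the same route as the paper's proof (Lemma~\ref{lem_sthelix_unique_mfd2}): both reduce to comparing the constructions on overlaps, invoke V\~{u} Ng\d{o}c's uniqueness of toric momentum maps with fixed first component up to $T^k$ and a translation, and reconcile the two pieces using the fact that the total monodromy around all $c$ focus-focus values is $T^c$, which is exactly the equivalence relation $\sim$ and condition~\eqref{part_extend} of Definition~\ref{def_sthelix}. The only difference is organizational: you split the independence into three separate checks (neighborhood, straightening map, cut), whereas the paper handles the cut and straightening map simultaneously by comparing on the two connected components of $U_1\cap U_2$ and showing $k_B-k_A=c$.
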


Lemma~\ref{lem_sthelix_unique_mfd} is restated with more precision later
as Lemma~\ref{lem_sthelix_unique_mfd2}.

\begin{definition}
Let $\semitoric$ denote the space of symplectic semitoric manifolds
and $\helixspace$ denote the collection of semitoric helices.
 The map
 $\helixmap\colon\semitoric\to\helixspace$
 assigns to each symplectic semitoric manifold $(M,\om,F)$ a semitoric
 helix $\helixmap(M,\om,F)=\mathcal{H}$, where $\mathcal{H}$ is the semitoric
 helix associated to $(M,\om,F)$.
\end{definition}
Lemma~\ref{lem_sthelix_unique_mfd} shows that the map $\helixmap\colon\semitoric\to\helixspace$ is well-defined.

\begin{prop}
 Let $(M,\om,F)$ be a symplectic semitoric manifold with associated
 helix $\helixmap(M,\om,F)$.  The symplectic semitoric manifold $(M',\om',F')$ can be
 obtained from $(M,\om,F)$ by a blowup
 if and only if the associated helix $\helixmap(M,\om,F)$ can be obtained
 from $\helixmap(M,\om,F)$ by a blowup of semitoric helices.
 Moreover, $(M',\om',F')$ can be
 obtained from $(M,\om,F)$ by a blowdown
 if and only if the associated helix $\helixmap(M',\om',F')$ can be obtained
 from $\helixmap(M,\om,F)$ by a blowdown of semitoric helices.
\end{prop}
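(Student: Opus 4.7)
The plan is to reduce the statement to the known correspondence in the toric case (the proposition attributed to~\cite{Fulton1993} in Section~\ref{sec_background}) by exploiting the fact that blowups and blowdowns are purely local operations at an elliptic-elliptic point, carried out in a chart in which the semitoric structure is indistinguishable from a toric one. First I would observe that the construction of the helix in Section~\ref{sec_helixconstructoutline} depends on a choice of cut $\ell$, but by Lemma~\ref{lem_sthelix_unique_mfd} the resulting equivalence class is independent of that choice. Thus, given the elliptic-elliptic point $p\in M$ at which the blowup or blowdown is to be performed, I may choose $\ell$ so that neither it nor its endpoints lie near $F(p)$. Then a neighborhood of the corner $F(p)$ of $F(M)$ lies in $U$, the toric momentum map $F_{\mathrm{toric}}$ is defined there, and the two consecutive vectors $v_i,v_{i+1}$ of $\helixmap(M,\om,F)$ corresponding to the edges meeting at $F_{\mathrm{toric}}(p)$ are exactly the inward-pointing normals of those edges.

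Next I would invoke that the symplectic toric blowup of Section~\ref{sec_blowups} only alters $M$ inside the chart $\phi^{-1}(B^4(\sqrt{\la^2+\de^2}))$, which for $\la$ small enough lies entirely inside $F^{-1}(U)$. On this chart the integrable system agrees with a genuine symplectic toric model, so applying Fulton's toric fan correspondence to this local model shows that the resulting symplectic semitoric manifold $(M',\om',F')$ contributes inward normals $v_i,\,v_i+v_{i+1},\,v_{i+1}$ in place of $v_i,v_{i+1}$ near $F(p)$. Outside this chart $(M,\om,F)$ and $(M',\om',F')$ coincide, so all other vectors are unchanged, and since the blowup creates no focus-focus point the complexity $c$ is preserved. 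The extension condition $\matr{1}{c}{0}{1}v_i=v_{i+d}$ of Definition~\ref{def_sthelix} then forces the insertion to occur in every period, which is precisely the helix blowup at $v_i$.

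For the blowdown half, suppose that $\helixmap(M,\om,F)$ admits a helix blowdown at $v_j$, that is, $v_j=v_{j-1}+v_{j+1}$. The corner between the edges with inward normals $v_{j-1}$ and $v_j$ corresponds to an elliptic-elliptic point $p$ of $(M,\om,F)$, and after arranging $\ell$ to avoid $F(p)$ as above, the local toric model around $p$ has fan $(v_{j-1},v_j,v_{j+1})$ in Fulton blowdown position. The standard toric blowdown in this chart may therefore be performed and glued back into $M$, producing a symplectic semitoric manifold $(M',\om',F')$ whose helix is the helix blowdown of $\helixmap(M,\om,F)$; this simultaneously gives the converse of the blowup direction, since the passage $(M,\om,F)\leadsto(M',\om',F')$ in one direction is the inverse of the passage in the other.

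The main obstacle I foresee is verifying cleanly that the cut $\ell$ and the chart around $p$ can be arranged simultaneously so that the symplectic toric blowup of Section~\ref{sec_blowups}, applied to the local toric model sitting inside $F^{-1}(U)$, genuinely yields a symplectic semitoric manifold in the sense of Section~\ref{sec_mainresults} and not merely a symplectic manifold with a modified $S^1$-action. This relies on the fact that the ball embedding underlying the toric blowup is equivariant for the $(S^1\times\R)$-action (as is noted in Section~\ref{sec_blowups}), together with Lemma~\ref{lem_sthelix_unique_mfd} guaranteeing independence of the helix from $\ell$ and hence compatibility between the local toric data and the global semitoric data.
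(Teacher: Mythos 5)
Your proposal is correct and follows essentially the same route as the paper: the paper's own proof is a two-sentence reduction to the toric fan correspondence, noting that the helix vectors are the inward normals of a local toric momentum map image and that the semitoric blowup is a toric blowup with respect to that momentum map. Your version simply fills in the details the paper leaves implicit (arranging the cut $\ell$ away from $F(p)$ via Lemma~\ref{lem_sthelix_unique_mfd}, locality of the blowup inside $F^{-1}(U)$, preservation of the complexity $c$, and the periodic extension condition), all of which are sound.
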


\begin{proof}
 The helix is obtained as the inwards pointing normal
 vectors on the image of a toric momentum map on a subset
 of $M$, and the blowups we have defined are those which
 produce toric blowups with respect to this momentum map.
 Thus, the correspondence between toric blowups/downs of toric
 manifolds and blowups/downs of toric fans implies the result.
\end{proof}

\subsection{The algebraic technique}

Let $S,T\in\sltz$ be the standard generators given by
 \begin{equation}\label{eqn_ST}
 S = \matr{0}{-1}{1}{0}\textrm{ and } T = \matr{1}{1}{0}{1}
\end{equation}
so
$
 \sltz = \langle S,T \mid STS = T^{-1}ST^{-1}, S^4 = I\rangle
$
and
$
 \psltz = \langle S,T \mid STS = T^{-1}ST^{-1}, S^2 = I\rangle.
$
We denote by $\STgroup$ the free group on
letters $S$ and $T$.

\medskip
\noindent{\bf Notation:} Since we consider several groups generated by $S$ and
$T$ we use $=_H$ to denote equality in the group $H$.
For instance, $S^4 =_{\sltz} I$ but $S^4 \neq_{\STgroup}I$.
\medskip

Given $v,w\in\Z^2$ we denote by $[v,w]$ the $2\times 2$
matrix with first column $v$ and second column $w$ and denote
by $\det(v,w)$ the determinant of $[v,w]$.

The group
$
 G = \langle S,T\mid STS = T^{-1}ST^{-1}\rangle
$
is isomorphic to the preimage of $\sltz$
in the universal cover of
$\sltr$~\cite[Proposition 3.7]{KaPaPe2016}, as
in the following diagram:
\begin{equation*}
 \begin{tikzcd}
  G \ar{d}{}  \ar[hook]{r}{\rho}    &  \widetilde{\sltr}   \ar{d}{} \\
  \sltz \ar{r}{} \ar[hook]{r}{i} &  \sltr
 \end{tikzcd}
\end{equation*}
where $\widetilde{\sltr}$ denotes the universal cover
of $\sltr$, which has fundamental group $\Z$.
Above $\rho\colon G\to \sltr$ denotes the map that takes $G$ isomorphically
to the preimage of $\sltz$ in $\widetilde{\sltr}$ given by
\begin{equation}\label{eqn_rhodef}
 \rho(T) = \matr{1}{t}{0}{1}_{0\leq t \leq 1}\,\,\textrm{ and }\,\,\rho(S) = \begin{pmatrix}\mathrm{cos}\left(\frac{\pi t}{2}\right)&-\mathrm{sin}\left(\frac{\pi t}{2}\right)\\[3pt]\mathrm{sin}\left(\frac{\pi t}{2}\right)&\mathrm{cos}\left(\frac{\pi t}{2}\right)\end{pmatrix}_{0\leq t \leq 1}
\end{equation}
(as in~\cite[Proposition 3.7]{KaPaPe2016}), 
$i\colon \sltz \imm \sltr$ is the inclusion
map and the other two maps are the natural projections.
Each element of the kernel of the natural
projection from $G$ to $\sltz$, denoted
$\mathrm{ker}(G\to\sltz)$, represents a closed loop
in $\sltr$.
Let $(\R^2)^*:=\R^2\setminus\{(0,0)\}$.
\begin{definition}
 Given any closed
loop $\widetilde{\ga}\colon[0,1]\to(\R^2)^*$,
$\widetilde{\ga}(0) = \widetilde{\ga}(1)$, we denote
by $\mathrm{wind}(\widetilde{\ga})\in\Z$ the usual winding number
of $\widetilde{\ga}$ in $(\R^2)^*$.
\end{definition}
Define $\mathrm{pr}\colon\sltr\to(\R^2)^*$ by
\[
 \mathrm{pr}\colon\matr{a}{b}{c}{d}\mapsto\vect{a}{c}.
\]
Since $\pi_1(\sltr) \cong \pi_1((\R^2)*)\cong \Z$ and
$\mathrm{pr}$ sends a generator of $\pi_1(\sltr)$ to a generator
of $\pi_1((\R^2)^*)$, $\mathrm{pr}$ induces an isomorphism
at the level of fundamental groups.
\begin{definition}\label{def_windingofga}
 Given any loop $\ga\colon[0,1]\to\sltr$, $\ga(0) = \ga(1)$, we define
 the \emph{winding number of $\ga$}, denoted $\mathrm{wind}(\ga)$, by
 $\mathrm{wind}(\ga) := \mathrm{wind}\big(\mathrm{pr}(\ga)\big)$.
\end{definition}
In the following section we extend the map
$\mathrm{wind}\circ\rho\colon\mathrm{ker}(G\to\sltz)\to\Z$
to all of $G$.

\subsubsection{The winding number}

Let
$
 W:\STgroup \to \frac{1}{12}\Z
$
be the homomorphism generated by
$
 W(S) = \frac{1}{4}\textrm{ and }W(T) = \frac{-1}{12}.
$
Since $W(STS) = W(T^{-1}ST^{-1})$, $W$
descends to a map on $G$ which we also
denote $W$.  The map is known as the \emph{winding number}~\cite{KaPaPe2016}
because if $\si\in\mathrm{ker}(G\to\sltz)$ then
$W(\si)$ agrees with $\mathrm{wind}(\rho(\si))$
as in Definition~\ref{def_windingofga},
where $\rho$ is as in Equation~\ref{eqn_rhodef}.

 \begin{lemma}[\cite{KaPaPe2016}]
  \label{lem_windandW}
  Given $\si\in\mathrm{ker}(G\to\sltz)$,
  $
   W(\si) = \mathrm{wind}(\mathrm{pr}\circ\rho(\si)).
  $
 \end{lemma}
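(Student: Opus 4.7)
The plan is to show that both $W$ and $\mathrm{wind}\circ\mathrm{pr}\circ\rho$, restricted to $\ker(G \to \sltz)$, are group homomorphisms to $\Z$, and then verify they agree on a single generator of this infinite cyclic kernel.

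First I would identify $\ker(G \to \sltz)$ with $\pi_1(\sltr) \cong \Z$. Since $G$ is the preimage of $\sltz$ in $\widetilde{\sltr}$, the covering short exact sequence $1 \to \pi_1(\sltr) \to \widetilde{\sltr} \to \sltr \to 1$ restricts to $1 \to \pi_1(\sltr) \to G \to \sltz \to 1$. I would exhibit $S^4$ as an explicit generator: $S^4 =_{\sltz} I$, and from~\eqref{eqn_rhodef} the path $\rho(S^4)$ is a single full counterclockwise rotation in $\sltr$, whose projection under $\mathrm{pr}$ is the unit circle traversed exactly once. Since $\mathrm{pr}_*\colon \pi_1(\sltr) \to \pi_1((\R^2)^*)$ is an isomorphism and the winding number of $\mathrm{pr}\circ\rho(S^4)$ equals $1$, the loop $\rho(S^4)$ represents a generator of $\pi_1(\sltr)$, and hence $S^4$ generates $\ker(G \to \sltz)$.

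Next I would verify both sides are homomorphisms to $\Z$. For $W$ this is immediate, as it is defined as a homomorphism on $\STgroup$ and the identity $W(STS) = W(T^{-1}ST^{-1})$ forces it to descend to $G$; on the kernel it takes integer values by Lemma~\ref{lem_windandW} applied once (or directly, since $W(S^4)=1$ and $W$ of the kernel must then be integral). For $\mathrm{wind}\circ\mathrm{pr}\circ\rho$, the map $\rho$ is a group homomorphism into $\widetilde{\sltr}$, and for loops $\gamma_1, \gamma_2$ in $\sltr$ based at $I$ the pointwise product is homotopic to the concatenation (since $\pi_1(\sltr)$ is abelian as the fundamental group of a topological group), so the winding numbers of the $\mathrm{pr}$-projected loops add under multiplication in $G$.

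Finally I would evaluate both maps at the generator $S^4$: algebraically $W(S^4) = 4 W(S) = 1$, and geometrically $\mathrm{wind}(\mathrm{pr}\circ\rho(S^4)) = 1$ from the rotation computation above. Two homomorphisms $\Z \to \Z$ agreeing on a generator are equal, which gives the lemma. The only point demanding genuine care is the verification that $S^4$ is a \emph{true} generator of $\ker(G\to\sltz)$ rather than a positive multiple of one; this is exactly what the winding number computation at $S^4$ pins down via the isomorphism $\mathrm{pr}_*$.
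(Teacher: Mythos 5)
Your proposal is correct and follows essentially the same route as the paper: the paper's proof likewise observes that $W$ is a homomorphism, that $S^4$ generates $\mathrm{ker}(G\to\sltz)\cong\Z$, and that $W(S^4)=\mathrm{wind}(\rho(S^4))$, so the two maps agree. You simply spell out the details (that $\mathrm{wind}\circ\mathrm{pr}\circ\rho$ is also a homomorphism via the abelianness of $\pi_1(\sltr)$, and that $S^4$ is a true generator via $\mathrm{pr}_*$) that the paper leaves implicit.
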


 \begin{proof}
  The map $W$ is a homomorphism and
  $W(S^4) = \mathrm{wind}(\rho(S^4))$.
  Since $S^4$ is a generator 
  of $\mathrm{ker}(G\to\sltz)\cong \Z$
  this uniquely defines it.
 \end{proof}

For a semitoric helix $\mathcal{H}=(d,c,[\{v_i\}_{i\in\Z}])$ let
$-\mathcal{H} = (d,c,[\{-v_i\}_{i\in\Z}])$.
If $\mathcal{H}=\mathcal{H}'$ or $\mathcal{H}=-\mathcal{H}'$
we write $\mathcal{H}=\pm\mathcal{H}'$.
A cyclic permutation of a list
$(a_0, \ldots, a_{d-1})\in\Z^d$ of integers
is given by
\[(a_{k\,\mathrm{mod}d}, a_{k+1\,\mathrm{mod}d}, \ldots, a_{k+d-1\,\mathrm{mod}d})\]
for some $k\in\Z$.

\begin{prop}\label{prop_correspondence}
 Associated to any
 semitoric helix of length $d$ and complexity $c>0$
 there is a lists of integers $(a_0, \ldots, a_{d-1})\in\Z^d$
 which satisfy
 \[
  ST^{a_0}\ldots ST^{a_{d-1}} =_G S^4X^{-1}T^cX
 \]
 for some $X\in G$.
 This list of integers is unique up to cyclic permutation, and any
 such list of integers is associated to some semitoric helix.
 Semitoric helices $\mathcal{H}$ and $\mathcal{H'}$ have the same length,
 complexity, and associated integers if and only if $\mathcal{H} = \pm\mathcal{H}'$.
\end{prop}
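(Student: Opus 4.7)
My plan is to extract the list of integers from a recurrence satisfied by three consecutive helix vectors, record the result as an identity in $\sltz$, and then lift it to $G$ by a winding-number comparison.

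Given a helix $(d,c,[\{v_i\}_{i\in\Z}])$, the condition $\det(v_i,v_{i+1})=1$ means $v_i,v_{i+1}$ form a $\Z$-basis of $\Z^2$, and thus $v_{i+2}$ admits a unique expansion $v_{i+2} = a_{i+1}v_{i+1} - v_i$ for some $a_{i+1}\in\Z$. Equivalently, setting $X_i := [v_i,v_{i+1}]\in\sltz$, a direct matrix computation shows that $X_i^{-1}X_{i+1}$ has first column $e_2$ and determinant $1$, so $X_i^{-1}X_{i+1} =_{\sltz} ST^{a_i}$. Telescoping yields $X_d =_{\sltz} X_0\cdot ST^{a_0}\cdots ST^{a_{d-1}}$, while the periodicity condition $v_{i+d} = T^c v_i$ gives $X_d = T^c X_0$. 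Setting $X := X_0$ produces the $\sltz$-identity
$$ST^{a_0}\cdots ST^{a_{d-1}} =_{\sltz} X^{-1}T^cX.$$

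Next I would promote this to the claimed identity in $G$. Since $\ker(G\to\sltz) = \langle S^4\rangle$ maps isomorphically to $\Z$ via the winding homomorphism $W$ with $W(S^4)=1$, any two lifts of a given $\sltz$-element to $G$ differ by a unique power of $S^4$, detected by $W$. The concrete power $S^4$ appearing in the statement is accounted for by the geometric winding of the helix: applying $\mathrm{pr}\circ\rho$ to the product $X\cdot ST^{a_0}\cdots ST^{a_{d-1}}$ traces a piecewise path in $(\R^2)^*$ passing successively through $v_0,v_1,\ldots,v_d = T^c v_0$, and the counterclockwise arrangement of $v_0,\ldots,v_{d-1}$ in Definition~\ref{def_sthelix}(2) is precisely the statement that this path encircles the origin exactly once more than the direct path from $X$ to $T^cX$. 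Converting this geometric count into an algebraic statement via Lemma~\ref{lem_windandW} (applied to the ratio of lifts, which lies in the kernel) supplies the $S^4$.

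For uniqueness, I would observe that the recurrence defining $a_{i+1}$ depends only on the linear relation among $v_i,v_{i+1},v_{i+2}$, which is invariant under the shift $v_i\mapsto T^k v_i$ (part of the $\sim$-equivalence) and under the global sign flip $v_i\mapsto -v_i$, so both leave the $a_i$ unchanged. An index shift $v_i\mapsto v_{i+\ell}$, the other component of $\sim$, cyclically permutes the list. For the converse and the $\pm$-ambiguity, I would reconstruct a helix from any admissible list by picking an $X$ from the $G$-equation, defining $X_{i+1}:=X_i\cdot ST^{a_i}$ and $v_i := $ first column of $X_i$; the $\sltz$-projection of the equation gives periodicity, and the $S^4$ factor forces counterclockwise ordering. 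The only ambiguity in $X$ that preserves the equation is left multiplication by the centralizer of $T^c$ in $\sltz$, which for $c>0$ is generated by $T$ (giving $\sim$-equivalent helices) and by $-I = S^2$ (giving $-\mathcal{H}$), matching the $\pm$-ambiguity exactly.

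The main obstacle will be the winding-number step: pinning down the precise exponent $S^4$ rather than some other $S^{4k}$ requires carefully matching the algebraic value of $W$ on the word with a concrete revolution count for the projected path, and verifying that the "counterclockwise ordering" condition in Definition~\ref{def_sthelix} translates into exactly one extra revolution beyond the straight path from $X$ to $T^cX$.
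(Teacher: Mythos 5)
Your proposal follows essentially the same route as the paper: the recurrence $v_{i+2}=a_iv_{i+1}-v_i$ and the telescoping $[v_d,v_{d+1}]=[v_0,v_1]\,ST^{a_0}\cdots ST^{a_{d-1}}$ are the content of Lemma~\ref{lem_listofint}, the lift to $G$ with the exact exponent $S^4$ pinned down by a winding-number/homotopy computation is Lemma~\ref{lem_sthelixeqn} (via Lemmas~\ref{lem_windandW}, \ref{lem_homotopy}, and~\ref{lem_SThomotopic}), and the $\pm T^k$ centralizer argument for the $\pm\mathcal{H}$ ambiguity is exactly the paper's concluding step. The delicate point you flag --- converting the counterclockwise ordering into the precise statement that the loop $A_0\,\rho(\si)A_0^{-1}\rho(T^{-c})$ has winding number exactly $1$ --- is indeed where the paper expends its effort, and your sketch of that step is consistent with how the paper carries it out.
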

Proposition~\ref{prop_correspondence} is proven in Section~\ref{sec_algebraictech}.

\begin{definition}
A word $\eta\in\STgroup$ is \emph{$S$\--positive} if it can be written using only
non-negative powers of $S$, $T$, and $T^{-1}$.
\end{definition}
To classify the minimal models of symplectic semitoric manifolds
we will show that the associated word of a minimal helix
(as in Proposition~\ref{prop_correspondence}) is very close to the following standard form
in $\psltz$.
\begin{theorem}[Standard form in $\psltz$]
\label{thm_standardpsltz}
 If $X\in\sltz$ there exists a unique string
 $\overline{X}\in\STgroup$
 such that $X=_{\psltz}\overline{X}$ and
 \[
  \overline{X} =_{\STgroup} T^bST^{a_0}\ldots ST^{a_{d-1}}
 \]
 where $a_i>1$ for $i=0, \ldots, d-2$.
 Moreover,
 $
  W(\overline{X})\leq W(\eta)
 $
 for all $S$\--positive $\eta\in\STgroup$
 satisfying $\eta=_{\psltz}X$.
\end{theorem}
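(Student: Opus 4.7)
The plan is to establish existence of the standard form $\overline{X}$ via an explicit reduction algorithm on $S$-positive words, and to deduce both the uniqueness of $\overline{X}$ and the $W$-inequality from careful tracking of this algorithm.

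I would first introduce four rewriting rules, each a valid identity in $\psltz$ that preserves $S$-positivity: (a) cancel $S^2$ using $S^2 =_{\psltz} I$; (b) $STS \to T^{-1}ST^{-1}$, from the defining relation of $G$; (c) $ST^{-1}S \to TST$; and (d) $ST^{-k}S \to T(ST^2)^{k-1}ST$ for $k \geq 2$. Rules (c) and (d) are derivable from (a) and (b) in $\psltz$ or checked directly at the matrix level. Using $W(S) = 1/4$ and $W(T) = -1/12$, one verifies by direct computation that rules (a), (c), and (d) each strictly decrease $W$ by $1/2$, while rule (b) preserves $W$ but strictly decreases the $S$-count of the word by one. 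A word $T^b S T^{a_0} \cdots S T^{a_{d-1}}$ fails to be in standard form precisely when it contains $S^2$ or a middle segment $ST^a S$ with $a \leq 1$, so at least one of the four rules applies to any non-standard $S$-positive word.

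For existence, I would start with any $S$-positive representative $\eta$ of $X$ and apply the reductions iteratively. The pair (winding number, $S$-count) strictly decreases in the lexicographic order at each step. Provided $W$ is bounded below over $S$-positive representatives of $X$ --- which I would establish by independently constructing at least one standard-form representative of $X$ via a Euclidean-style algorithm on the matrix entries of $X$, using its $W$-value as the bound --- the procedure terminates at a standard-form word. For uniqueness, I would invoke the free product decomposition $\psltz \cong \langle S \rangle * \langle ST \rangle \cong \Z/2 * \Z/3$: translating a standard form into the $(S, U^{\pm 1})$-alphabet (with $U = ST$ and $T =_{\psltz} SU$) produces an alternating reduced word, and by the normal form theorem for free products this translation is injective, so distinct standard forms yield distinct $\psltz$-elements.

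The winding-number inequality then follows directly: for any $S$-positive $\eta =_{\psltz} X$, the reduction procedure yields the (unique) standard form $\overline{X}$ through $W$-nonincreasing steps, so $W(\overline{X}) \leq W(\eta)$. The main technical obstacle is the lower bound on $W$ needed for termination; this is essentially bootstrapped from existence, since a standard-form representative constructed by any independent means provides an a priori bound. A secondary subtlety, which is fortunately automatic, is that the initial $T^b$ and terminal $T^{a_{d-1}}$ segments are unconstrained by the $a_i > 1$ requirement --- the reduction patterns $S^2$ and $ST^a S$ all require $S$'s on both sides and therefore cannot straddle the word boundaries, so no further reduction at the ends is possible.
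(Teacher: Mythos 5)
Your reduction rules, their effect on $W$ (rules (a), (c), (d) each drop $W$ by $\nicefrac{1}{2}$, rule (b) preserves $W$ and drops the $S$-count), and the lexicographic termination measure are exactly the paper's Reductions 1--3 (your (c) and (d) together are the paper's single rule $ST^{-n}S\to(TST)^n$), so the existence and $W$-minimality parts of your argument track the paper's closely. The genuine gap is in how you justify that $W$ is bounded below on the $S$-positive representatives of $X$, which is what makes the lexicographic descent terminate. You propose to take the $W$-value of one independently constructed standard-form representative ``as the bound,'' but nothing you have proved at that point says that a single representative's $W$-value bounds $W$ from below over \emph{all} $S$-positive representatives --- that lower bound is essentially the minimality statement you are trying to prove, so the bootstrap is circular. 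The paper closes this hole with two lemmas for which your proposal has no substitute: Lemma~\ref{lem_nonnegwinding}, which shows that any nonempty $S$-positive word representing the identity in $\psltz$ has $W>0$ (proved geometrically, by realizing such a word as a closed path of vectors in $(\R^2)^*$ that travels only counter-clockwise, so its winding number is a positive integer), and Lemma~\ref{lem_minimalwinding}, which fixes one $S$-positive word $\eta$ representing $X^{-1}$ and deduces $W(\sigma)\geq -W(\eta)$ for every $S$-positive $\sigma=_{\psltz}X$ from $W(\sigma\eta)=W(\sigma)+W(\eta)\geq 0$. Without this (or some equivalent) input, your algorithm is not known to terminate, and the whole existence/minimality argument collapses. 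Note also that the $S$-count alone cannot serve as a fallback measure, since your rule (d) with $k\geq 3$ \emph{increases} the number of $S$'s.

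Your uniqueness argument is genuinely different from the paper's and looks workable. The paper proves uniqueness by letting $\psltz$ act on $\R\cup\{\infty\}$ by M\"obius transformations (Lemma~\ref{lem_ai}: a relation $T^bST^{a_0}\cdots ST^{a_{d-1}}=_{\psltz}I$ with $d>2$ forces some interior exponent into $\{0,\pm1\}$) and then running a fairly long case analysis on $\si\eta^{-1}$ for two putative standard forms $\si,\eta$. Your route through $\psltz\cong\Z/2*\Z/3$ with $U=ST$ replaces that case analysis with the normal form theorem for free products, which is conceptually cleaner; the price is that you must check that the translation of every standard form into the $(S,U^{\pm1})$ alphabet is a reduced alternating word and that the parsing back to a standard form is unique, and the unconstrained boundary exponents $b$ and $a_{d-1}$ (which may be $0$, $\pm1$, or negative) generate several cases there. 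That is a real but manageable tradeoff, not a gap; the termination issue above is the part that needs a new idea.
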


We call $\overline{X}$ the \emph{standard form of $X$}.
Theorem~\ref{thm_standardpsltz} is proven in
Section~\ref{sec_pltz}.

\subsection{Main result: minimal models of symplectic semitoric manifolds}

Let
\begin{equation}
 \label{eqn_mathcalS}
 \mathcal{S} = \big\{A\in\sltz \,\big|\, \overline{A} = ST^{a_0}\ldots ST^{a_{d-1}}\textrm{, for }d>5, a_{d-1}\notin\{0,1\}\big\}.
\end{equation}
Recall a semitoric helix of length $d$ is determined by specifying
the complexity and any $d$ consecutive vectors in
any representative of the helix.

\begin{theorem}
 \label{thm_classifyvectors}
 Suppose that $(M,\om,F)$ is a minimal compact symplectic semitoric
 manifold with $c>0$ focus-focus points and associated semitoric helix
 $(d,c,[\{v_i\}_{i\in\Z}]) = \helixmap(M,\om,F)$.
 If $d<5$ then
 the representative $\{v_i\}_{i\in\Z}$ can be chosen to be exactly
 one of the following:
 \newcounter{typecount}
 \setcounter{typecount}{0}
 \begin{center}
 \begin{tabular}{c|c|cl|c}
  \rm{type}  & \rm{length} & $v_0, \ldots, v_{d-1}$ & & \rm{complexity}\\
  \hline
  \refstepcounter{typecount}\rm{(\thetypecount)}\label{type1} &$d=2$  & $\vect{0}{1}$, $\vect{-1}{-2}$ & & $c=1$ \rule{0pt}{4.5ex}\\[3ex]
  \refstepcounter{typecount}\rm{(\thetypecount)}\label{type2} &$d=2$  & $\vect{0}{1}$, $\vect{-1}{-1}$ & & $c=2$ \\[3ex]
  \refstepcounter{typecount}\rm{(\thetypecount)}\label{type3} &$d=3$  & $\vect{0}{1}$, $\vect{-1}{k}$, $\vect{0}{-1}$ & \,\,$k\neq\pm2$& $c=1$ \\[3ex]
  \refstepcounter{typecount}\rm{(\thetypecount)}\label{type4} &$d=3$  & $\vect{1}{0}$, $\vect{0}{1}$, $\vect{-1}{-1}$ & & $c\neq 2$ \\[3ex]
  \refstepcounter{typecount}\rm{(\thetypecount)}\label{type5} &$d=4$  & $\vect{1}{0}$, $\vect{0}{1}$, $\vect{-1}{k}$, $\vect{0}{-1}$ & \,\,$k\neq\pm1,0$ & $c\neq1$ \\[3ex]
  \refstepcounter{typecount}\rm{(\thetypecount)}\label{type6} &$d=4$  & $\vect{1}{0}$, $\vect{0}{1}$, $\vect{-1}{0}$, $\vect{k}{-1}$ & $\begin{array}{l}k\neq -1\\ k\neq1-c\end{array}$ & $c>0$
  \end{tabular}
  \end{center}
 Otherwise, $d\geq 5$, in which case $d>5$,
 and we say that the symplectic semitoric manifold and helix are
 minimal of type
 \refstepcounter{typecount}{\rm (\thetypecount)}\label{type7}.
 There is a one-to-one correspondence between minimal helices of
 type~\eqref{type7} and the set $\mathcal{S}\times\Z_{>0}$.
 Given $c\in\Z_{>0}$ and a basis $v_0,v_1$ of
 $\Z^2$ satisfying $[v_0,v_1]\in\mathcal{S}$ then the corresponding
 minimal helix of type~\eqref{type7} is determined by the following
 procedure:
 Let $a_0, \ldots ,a_{d-1}, d\in\Z$ be the unique
 integers which satisfy
 \begin{equation}\label{eqn_thmclassify}
  S^2 \overline{[v_0,v_1]^{-1}} T^c \overline{[v_0,v_1]} =_{\STgroup} ST^{a_0}\ldots ST^{a_{d-1}}.
 \end{equation}
 Then the recurrence relation
 \[
  v_j = a_{j-2}v_{j-1} + v_{j-2}
 \]
 for $j=0, \ldots, d-1$ and given $v_0,v_1$ determines the 
 vectors $\{v_i\}_{0\leq i<d}$ which, along with the 
 complexity $c$, determine the helix, $\mathcal{H}$.
\end{theorem}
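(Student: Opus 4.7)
The plan is to translate the classification into an algebraic problem via Proposition~\ref{prop_correspondence} and Theorem~\ref{thm_standardpsltz}, and then to perform a case analysis on the length $d$ of the helix.

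\textbf{First,} I would apply Proposition~\ref{prop_correspondence}: minimal helices of length $d$ and complexity $c>0$ correspond (up to cyclic rotation and sign) to integer tuples $(a_0,\ldots,a_{d-1})$ satisfying
\[
ST^{a_0}\cdots ST^{a_{d-1}} =_G S^4 X^{-1}T^c X
\]
for some $X\in G$. Since $\det(v_{i-1},v_i)=\det(v_i,v_{i+1})=1$ forces a linear three-term recurrence on the $v_i$ with coefficient $a_i$, the minimality condition $v_i\ne v_{i-1}+v_{i+1}$ is equivalent to $a_i\ne 1$ for every $i$. The theorem thus reduces to classifying $S$-positive words $ST^{a_0}\cdots ST^{a_{d-1}}$ with all $a_i\ne 1$ which represent $S^4 X^{-1}T^c X$ in $G$, and to recovering the helix from such a word.

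\textbf{Second,} I would project the equation to $\psltz$, where $S^2=I$ makes it $ST^{a_0}\cdots ST^{a_{d-1}} =_{\psltz} X^{-1}T^c X$, and replace $X$ by its standard form $\overline{X}=T^bST^{b_0}\cdots ST^{b_{e-1}}$ using Theorem~\ref{thm_standardpsltz}. The winding number $W$ from Section~\ref{sec_algebraictech} is the crucial bookkeeping tool: the requirement that the helix wind only once around the origin (encoded in the $S^4$ on the right-hand side) pins down the total winding of the left-hand side to within a small additive constant of the minimum winding over all $S$-positive representatives of $\overline{X}^{-1}T^c\overline{X}$. By the uniqueness of the standard form in Theorem~\ref{thm_standardpsltz}, the word $ST^{a_0}\cdots ST^{a_{d-1}}$ (possibly after cyclic rotation) must either already be in standard form or differ from it by a short, enumerable sequence of local moves coming from the relations $S^2=I$ and $(ST)^3=I$ in $\psltz$.

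\textbf{Third,} I would split into cases by $d$. For $d\geq 5$, the word must already be cyclically in standard form, so setting $[v_0,v_1]=\overline{X}$, the constraints from Theorem~\ref{thm_standardpsltz} together with $a_i\ne 1$ translate exactly into $[v_0,v_1]\in\mathcal{S}$, producing the claimed bijection with $\mathcal{S}\times\Z_{>0}$; the recurrence in Equation~\eqref{eqn_thmclassify} then reconstructs the helix. The case $d=5$ is ruled out by checking directly that no standard form of length $5$ simultaneously satisfies all $a_i\ne 1$, is cyclically reduced, and has the form $\overline{X}^{-1}T^c\overline{X}$. For $d\in\{1,2,3,4\}$ the possible standard forms $\overline{X}$ and values of $c$ are an explicit finite list; conjugating $T^c$ by short $\overline{X}$ and retaining only those yielding $a_i\ne 1$ produces precisely the six families $(1)$--$(6)$, with the parameter exclusions ($k\ne\pm 2$, $c\ne 2$, $k\ne\pm 1,0$, $k\ne -1, 1-c$, etc.) arising exactly from the values that would force some $a_i=1$.

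\textbf{Main obstacle.} The hardest step will be the second one, namely, quantitatively controlling the number of reductions separating an $S$-positive word with all $a_i\ne 1$ from the standard form of its $\psltz$-image, while simultaneously coordinating with the cyclic ambiguity of helix representations. A priori many words of the required shape can represent a conjugate of $T^c$, but the winding-number bound coming from wrapping the origin only once is very tight, forcing only a bounded number of reductions and hence reducing the classification to the finite case analysis above. Making this quantitative bound rigorous is the combinatorial heart of the proof.
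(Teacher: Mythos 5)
Your overall strategy coincides with the paper's: reduce to words via Proposition~\ref{prop_correspondence}, use the winding number to show the word is a bounded number of reductions away from the standard form of Theorem~\ref{thm_standardpsltz}, and finish with a finite case analysis (the paper does this in Lemma~\ref{lem_classify}, organizing the cases by the shape of the conjugating element $X$ rather than by $d$). However, your step for $d\geq 5$ contains a genuine error: you assert that the word $\sigma = ST^{a_0}\cdots ST^{a_{d-1}}$ ``must already be cyclically in standard form.'' It never is. By Lemmas~\ref{lem_wxpluswxbar} and~\ref{lem_XTXminimal}, when $X\neq_{\psltz}T^k$ one has $W(\overline{\sigma}) = W\big(\overline{X^{-1}}T^c\overline{X}\big) = \frac{1}{2}-\frac{c}{12}$, while the helix condition forces $W(\sigma) = W(S^4X^{-1}T^cX) = 1-\frac{c}{12}$; since $W$ is invariant under cyclic rotation, $\sigma$ always differs from standard form by exactly one winding-reducing move (plus blowdowns, which minimality excludes). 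The correct conclusion, which is the content of Lemma~\ref{lem_reducingminwords} and Case IIc of Lemma~\ref{lem_classify}, is that after cyclic rotation $\sigma =_{\STgroup} S^2\,\overline{X^{-1}}T^c\overline{X}$, i.e., the word is $S^2$ prepended to a standard form, with $a_0=0$ (equivalently $v_2=-v_0$). If you literally take the word to be in standard form you get the wrong winding number, and the type~\eqref{type7} family would come out empty.

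A second, smaller omission: your phrase ``within a small additive constant of the minimum'' must be made precise, because the gap $W(\sigma)-W(\overline{\sigma})$ is $1$ when $X=_{\psltz}T^k$ and $\frac{1}{2}$ otherwise. The first case is where types~\eqref{type4} and~\eqref{type6} come from (it reduces to the classification of minimal toric fans), while the second case permits exactly one winding-reducing move, namely either an $S^2$ insertion or a single replacement $ST^{-n}S\to(TST)^n$; distinguishing these sub-possibilities, together with the degenerate conjugators $X=_{\psltz}T^kST^a$, is what produces types~\eqref{type1}, \eqref{type2}, \eqref{type3} and~\eqref{type5} and the parameter exclusions in the table. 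Your plan as written does not separate these cases and so would not, without this additional bookkeeping, recover the stated list.
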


Types~\eqref{type1}-\eqref{type6} are shown in 
Theorem~\ref{thm_mainintro} and
a representative example of type~\eqref{type7} is
shown in Figure~\ref{fig_schematic}.
Theorem~\ref{thm_classifyvectors} is a direct consequence
of Lemma~\ref{lem_classify}.

\begin{cor}\label{cor_formof7}
Suppose that $\mathcal{H}$ is a minimal helix of length
$d>4$.  Then $\mathcal{H}$ is of type~\eqref{type7} from 
Theorem~\ref{thm_classifyvectors} and
there exists a representative $\{v_i\}_{i\in\Z}$
such that $\mathcal{H} = (d,c,[\{v_i\}_{i\in\Z}])$ and the following hold:
\begin{enumerate}[font = \normalfont, nolistsep]
 \item\label{item_formof71} $v_0 = -v_2$;
 \item\label{item_formof72} there exists $k\in\Z$ with $2< k < d$ such that
  $v_{k}$ is $\vect{1}{0}$ or its negative.
\end{enumerate}
\end{cor}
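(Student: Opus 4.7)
The plan is to combine Theorem~\ref{thm_classifyvectors} with the standard form given by Theorem~\ref{thm_standardpsltz}. Since $\mathcal{H}$ is minimal of length $d>4$, Theorem~\ref{thm_classifyvectors} immediately implies that $\mathcal{H}$ is of type~\eqref{type7} (so in particular $d>5$) and is determined by a pair $(A,c)\in\mathcal{S}\times\Z_{>0}$, with $A=[v_0,v_1]$, via the list of integers $a_0,\ldots,a_{d-1}$ satisfying $S^2\overline{A^{-1}}T^c\overline{A}=_{\STgroup}ST^{a_0}\cdots ST^{a_{d-1}}$ and the recurrence in the statement of Theorem~\ref{thm_classifyvectors}. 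From here, both parts of the corollary become structural claims about the cyclic word associated to $\mathcal{H}$ by Proposition~\ref{prop_correspondence}, which can be read off directly once the right representative is chosen.

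For part~\eqref{item_formof71}, I would observe that the recurrence collapses $v_{i+1}$ to $\pm v_{i-1}$ precisely when $a_i=0$, so producing a representative with $v_0=-v_2$ reduces to producing a zero among the $a_i$'s and then applying the index shift permitted by the equivalence relation of Definition~\ref{def_sthelix}. To extract such a zero I would analyze the seam in the concatenation $\overline{A^{-1}}\cdot T^c\cdot \overline{A}$: by Theorem~\ref{thm_standardpsltz}, both $\overline{A}$ and $\overline{A^{-1}}$ have internal exponents strictly greater than $1$, so the only reductions (using $STS=T^{-1}ST^{-1}$) needed to rewrite this product in the unique $ST$-form required by Proposition~\ref{prop_correspondence} are concentrated at the seam, and the arithmetic of these reductions forces at least one resulting exponent to vanish. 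For part~\eqref{item_formof72}, I would note that since $T^j$ fixes $\vect{1}{0}$, a representative of $\mathcal{H}$ has $v_k=\pm\vect{1}{0}$ if and only if the helix itself contains a primitive integer vector on the $x$-axis, which by the determinant-one condition on neighboring vectors must in fact equal $\pm\vect{1}{0}$ exactly. Tracing the recurrence through the $\overline{A}$-block of the cyclic word produces such a vector at an intermediate index, and since $d>5$ one can always arrange this index to lie strictly in $(2,d)$ after the shift of part~\eqref{item_formof71}.

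The main obstacle is the seam computation: making rigorous the claim that rewriting $S^2\overline{A^{-1}}T^c\overline{A}$ into canonical $ST$-form must yield at least one $a_i=0$ requires carefully tracking the $STS=T^{-1}ST^{-1}$ reductions at the seam, together with the interaction of the prefix $S^2$ (a winding-number correction essential in $G$ but invisible in $\psltz$) and the tail of $\overline{A^{-1}}$. The constraint $a_{d-1}\notin\{0,1\}$ built into the definition of $\mathcal{S}$, together with the strict lower bound on the internal exponents coming from Theorem~\ref{thm_standardpsltz}, will be used to ensure the zero is genuine and not an artifact that can be absorbed into neighboring factors. Once this is in hand, the simultaneous realization of~\eqref{item_formof71} and~\eqref{item_formof72} becomes a bookkeeping matter: one picks the shift parameter of Definition~\ref{def_sthelix} so that the vanishing coefficient is placed at position $i=1$, and then adjusts the $T$-twist so that the $x$-axis vector emerging from the $\overline{A}$-block sits at an index $k\in(2,d)$, which is compatible precisely because $d>5$.
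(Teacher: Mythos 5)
Your reduction to Theorem~\ref{thm_classifyvectors} is the right starting point, but the mechanism you propose for producing the zero exponent in part~\eqref{item_formof71} is not the one that operates, and as described it would fail. By Lemma~\ref{lem_wraparoundinverses} (equivalently Lemma~\ref{lem_XTXminimal}), for $A_0\in\mathcal{S}$ the concatenation $\overline{A_0^{-1}}\,T^c\,\overline{A_0}$ is \emph{already} in standard form: writing $\overline{A_0}=T^bS\cdots$ and $\overline{A_0^{-1}}=T^{b'}S\cdots ST^{a'_{d'-1}}$, the seam exponent is $a'_{d'-1}+c+b=c+1\geq 2$, so no $STS$- or $ST^{-n}S$-reductions occur there and no exponent vanishes at the seam. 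The vanishing exponent comes instead from the prefix $S^2$ that you set aside as ``invisible in $\psltz$'': the word associated to a type~\eqref{type7} helix is $\sigma=S^2\overline{A_0^{-1}}T^c\overline{A_0}$ itself, not its standard-form reduction, and since $S^2=ST^0S$ this forces $a_0=0$ on the nose; the recurrence at $j=2$ then gives $v_2=-v_0$ with no further work. Conflating $\sigma$ with $\overline{\sigma}$ is the gap: the integers $a_i$ of the helix are read off from $\sigma$, in which the $S^2$ is very much present.

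For part~\eqref{item_formof72} your plan (``tracing the recurrence through the $\overline{A}$-block'') names roughly the right location but supplies no argument. The paper's proof is a short computation: since $\overline{A_0}$ begins with $S$, there is a $k$ with $ST^{a_0}\cdots ST^{a_{k+1}}=_{\STgroup}S^2\overline{A_0^{-1}}T^c$, and the recurrence gives $[v_{k+2},v_{k+3}]=_{\sltz}A_0\,ST^{a_0}\cdots ST^{a_{k+1}}=_{\psltz}A_0S^2A_0^{-1}T^c=_{\psltz}T^c$, whose first column is $\pm\vect{1}{0}$; the bound $2<k+2<d$ follows because $\overline{A_0^{-1}}$ and $\overline{A_0}$ each contain at least one $S$. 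You would need an identity equivalent to this; the general observation that an $x$-axis vector in a helix must be primitive does not by itself produce one.
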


Corollary~\ref{cor_formof7} is proven in Section~\ref{sec_minhelices}.

\subsubsection{Idea of proof of \tex{Theorem~\ref{thm_classifyvectors}}}
In the proof of Theorem~\ref{thm_standardpsltz}, the standard
form in $\psltz$, we use
a reduction algorithm with four steps.  Three of these steps
reduce the winding number by $\nicefrac{1}{2}$ and the remaining step, which corresponds
to a blowdown, does not change the winding number.  We will see,
by Lemmas~\ref{lem_wxpluswxbar} and~\ref{lem_XTXminimal},
that if $a_0, \ldots, a_{d-1}$ is associated to a semitoric helix then
\[
 W(ST^{a_0}\ldots ST^{a_{d-1}}) - W(\overline{ST^{a_0}\ldots ST^{a_{d-1}}}) = \left\{\begin{array}{ll}1 ,&\,X =_{\psltz} T^k\\\frac{1}{2},&\,\textrm{otherwise}\end{array}\right.
\]
and thus we know that
$ST^{a_0}\ldots ST^{a_{d-1}}$ can be reduced to the
standard form from Theorem~\ref{thm_standardpsltz} by using
only one or two of the moves which reduce $W$ along with any number
of blowdowns.

This observation allows us to prove Lemma~\ref{lem_classify}, which classifies
all minimal words satisfying Equation~\eqref{eqn_semitorichelix}.
This implies Theorem~\ref{thm_classifyvectors},
which is proven in Section~\ref{sec_minhelices}.
The method of the proof of Theorem~\ref{thm_classifyvectors}
is carried out on a specific example in
Section~\ref{sec_representativeex}.

\section{From symplectic semitoric manifolds to helices}
\label{sec_geometry}

In this section we give the details of the construction of
the semitoric helix outlined in Section~\ref{sec_helixconstructoutline}.
To do this, we need the following result of  V\~{u} Ng\d{o}c, adapted
slightly to fit the present situation.
\begin{theorem}[{Follows from~\cite[Theorem 3.8]{VN2007}}]
\label{thm_sanstraightening}
 If $(M,\om,F)$ is a symplectic semitoric manifold and $U\subset F(M)$ is
 simply connected, open as a subset of $F(M)$, and contains
 no values of focus-focus points of $F$ then there exists a
 smooth function $f\colon\R^2\to \R^2$ such that
  $f \circ F$ is a momentum map for a Hamiltonian $\T^2$\--action
   on $F^{-1}(U)$ and
  $f$ fixes the first component, i.e.~there exists a function
   $f^{(2)}\colon\R^2\to\R$ such that $f(x,y) = (x, f^{(2)}(x,y))$.
\end{theorem}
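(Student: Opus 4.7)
The plan is to produce local toric momentum maps on $F^{-1}(U)$ via the Arnold--Liouville--Mineur theorem, patch them globally using simple--connectedness of $U$, and then fix the normalization so that the first component of the resulting momentum map equals $J$.

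First, I would observe that since $F$ has no hyperbolic singularities (by the semitoric hypothesis) and $U$ contains no focus--focus values, every point of $F^{-1}(U)$ is either regular, elliptic--regular, or elliptic--elliptic. Near each regular value in $U$ the Arnold--Liouville--Mineur theorem provides local action coordinates $(I_1, I_2)$ with $2\pi$--periodic Hamiltonian flows; near elliptic values, Eliasson's normal form provides the analogous local toric structure. In both cases one obtains a local smooth map $f_\alpha \colon V_\alpha \to \R^2$, defined on an open $V_\alpha \subset \R^2$, such that $f_\alpha \circ F$ is a toric momentum map on $F^{-1}(V_\alpha \cap U)$.

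Second, I would globalize. The obstruction to patching the local $f_\alpha$ into a single smooth $f\colon \R^2 \to \R^2$ is the monodromy of the period lattice of the (singular) Lagrangian fibration $F|_{F^{-1}(U)}$. By V\~u Ng\d{o}c~\cite[Theorem~3.8]{VN2007}, for a symplectic semitoric manifold this monodromy is generated by loops around focus--focus values. Because $U$ is simply connected and contains no focus--focus values, the monodromy is trivial over $U$, so the local momentum maps can be glued into a global smooth function $f$ on an open neighborhood of $U$ in $\R^2$; extending $f$ to all of $\R^2$ using a smooth cutoff (outside of $U$ the particular values of $f$ are irrelevant) yields the required map.

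Third, I would normalize so that the first coordinate is preserved. Any two global toric momentum maps on $F^{-1}(U)$ differ by the action of an element of $\mathrm{AGL}(2,\Z)$ on $\R^2$. Since $J$ is already a globally defined momentum map generating an effective Hamiltonian $S^1$--action of period $2\pi$ on $F^{-1}(U)$, the first component of $f \circ F$ must, up to sign and an additive constant, agree with $J$; precomposing $f$ with a suitable element of $\mathrm{AGL}(2,\Z)$ therefore arranges $f^{(1)}(x,y) = x$, giving $f(x,y) = (x, f^{(2)}(x,y))$ as desired. The main obstacle is the global patching in the second step, which is precisely the content of V\~u Ng\d{o}c's result and relies essentially on both the simple--connectedness of $U$ and the absence of focus--focus values inside $U$.
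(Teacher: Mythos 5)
Your proposal is consistent with the paper, which gives no independent argument for this statement: it is presented purely as an adaptation of V\~u Ng\d{o}c's Theorem 3.8, and your sketch (local action variables via Arnold--Liouville and Eliasson, global patching from triviality of the period-lattice monodromy over a simply connected set with no focus-focus values, then normalization) is exactly the standard argument underlying that cited result. One intermediate claim is imprecise: the first component of an arbitrary toric momentum map on $F^{-1}(U)$ need not equal $\pm J$ plus a constant; the correct statement is that $J$, generating an effective $S^1$-action and being constant on the fibers of $F$, is a primitive integral affine function of the actions and hence can be completed to a $\Z$-basis of the period lattice, which is what justifies your final precomposition by an element of $\mathrm{AGL}(2,\Z)$ to arrange $f(x,y)=(x,f^{(2)}(x,y))$.
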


Such a function $f\colon\R^2\to\R^2$ is known as a
\emph{straightening map} for the symplectic semitoric manifold
$(M,\om,F)$.

\subsection{Intrinsic construction of the helix}
\label{sec_intrinsicconstruction}

Let $(M,\om,F = (J,H))$ be a compact symplectic semitoric manifold and we
will construct the associated semitoric helix,
$\helixmap(M,\om,F)$.  The images under $F$ of the elliptic-regular
and elliptic-elliptic
singular points all lie in the boundary $\partial F(M)$
and there are finitely many focus-focus points,
whose images lie in the interior $\mathrm{int}(F(M))$
(see~\cite{VN2007}).  Choose a set $U'\subset F(M)$
such that
\begin{enumerate}
 \item $U'$ is open as a subset of $F(M)$;
 \item $U'$ contains $\partial F(M)$;
 \item $U'$ does not contain the image of any focus-focus point;
 \item $U'$ has fundamental group $\Z$.
\end{enumerate}
This is possible because $F(M)$ is simply connected~\cite[Theorem 3.4]{VN2007}
and compact (by assumption).
For instance, $U'$ could be chosen to be the set of all points in $F(M)$ less than
a distance of $\varepsilon$ from the boundary for a sufficiently
small $\varepsilon>0$.
Let $\ell\subset F(M)$ be any line segment starting from a point
in $F(M)\setminus U'$ and ending outside $F(M)$
which intersects $\partial F(M)$ in exactly one connected
component and does not include any singular points of maximal rank of
$F(M)$.  Let $U = U'\setminus\ell$.
We call such a subset a \emph{helix neighborhood}
for $(M,\om,F)$,
see the first step of Figure~\ref{fig_constructhelix}.

By Theorem~\ref{thm_sanstraightening}
there exists a straightening map $f\colon \R^2\to\R^2$ so that
$\mu = f\circ F$ is the momentum map for a Hamiltonian
$\T^2$\--action on $F^{-1}(U)$.
Thus, $f(\partial F(M) \cap U)$ is piecewise linear
of finitely many segments each with rational slope, because
it is the image of the elliptic-regular
and elliptic-elliptic singular points
of $(F^{-1}(U),\om,\mu)$ and this system has only finitely many
elliptic-elliptic fixed points. Let $d\in\Z$ be one less than the
number of segments so
that there are $d+1$ segments in this piecewise linear curve
and let $v_0, \ldots, v_{d}\in\Z^2$ be the
consecutive primitive vectors normal to these
segments facing towards the interior of $f(U)$,
numbered so that $v_0, \ldots, v_{d-1}$
are arranged in counter-clockwise order,
as shown in the last step of Figure~\ref{fig_constructhelix}.

The relationship between $v_0$ and
$v_d$ is determined by the monodromy from
the focus-focus points of the system.
In~\cite{VN2007} V\~{u} Ng\d{o}c studies the monodromy
effect of focus-focus points on toric momentum maps
defined on the preimage of the momentum map image minus a few "cuts"
that remove the focus-focus points and keep the set simply connected.
The proof holds for other simply connected sets,
such as the set $U$, and in this case implies that
$
 v_d = T^c v_0
$
because the set $U$ loops around all $c$ focus-focus points of the system.

Finally, by Definition~\ref{def_sthelix} part~\eqref{part_extend}
$v_0, \ldots, v_d$ extend to a unique semitoric helix 
$\mathcal{H} = (d,c,[\{v_i\}_{i\in\Z}])$.  We say that 
$\mathcal{H}$ is associated to the given symplectic semitoric manifold
$(M,\om,F)$.

Now we must show that the semitoric helix constructed in this
was is the unique one associated to $M$.
That is, we show the helix does not depend on the choices of open
set $U'$, line segment $\ell$, and straightening map $f$ made during the construction.

\begin{lemma}\label{lem_sthelix_unique_mfd2}
 There is precisely one semitoric helix associated to each symplectic semitoric manifold
\end{lemma}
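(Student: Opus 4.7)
The plan is to verify that the semitoric helix $\helixmap(M,\om,F) = (d,c,[\{v_i\}_{i\in\Z}])$ produced by the construction in Section~\ref{sec_intrinsicconstruction} is independent of the three choices made: the helix neighborhood $U'$, the cut $\ell$, and the straightening map $f$. The key observation is that each choice affects the list of inward normal vectors $v_0, \ldots, v_d$ in a way absorbed by the equivalence relation $\sim$ of Definition~\ref{def_sthelix} and by part~\eqref{part_extend} of that definition. I would treat the choices one at a time.

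First, fix $U'$ and $\ell$ (so $U = U' \setminus \ell$ is fixed) and let $f_1, f_2$ be two straightening maps, with $\mu_i = f_i \circ F$. Both $\mu_1$ and $\mu_2$ generate effective Hamiltonian $\T^2$\--actions on $F^{-1}(U)$ sharing $J$ as first component. Since $U$ is simply connected and the second components each generate a circle action commuting with $J$, the maps must be related by $\mu_2 = g \circ \mu_1$ for $g(x,y) = (x, y + nx + c_0)$ with $n \in \Z$ and $c_0 \in \R$, after fixing the sign convention so that the orientation of the helix is preserved. A direct computation shows that integer inward normals then transform as $v \mapsto \matr{1}{-n}{0}{1} v = T^{-n} v$, which is exactly the equivalence $\sim$ with parameters $k = -n$ and shift $\ell = 0$.

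Second, with $U'$ and $f$ fixed, consider two cuts $\ell_1, \ell_2$. Any isotopy from $\ell_1$ to $\ell_2$ can be decomposed into elementary moves: sliding the exterior endpoint of the cut or moving its interior endpoint within a single edge of $\partial F(M)$ leaves all data invariant; pushing the cut across an elliptic-elliptic corner cyclically permutes the list $(v_0, \ldots, v_d)$ and is absorbed by the shift index $\ell$ in $\sim$; and pushing the cut across a focus\--focus value multiplies the normals on one side by the monodromy matrix, which by V\~u Ng\d{o}c's analysis~\cite{VN2007} is conjugate to $T$. The extension rule $v_{i+d} = T^c v_i$ of Definition~\ref{def_sthelix}\eqref{part_extend} is precisely the assertion that the cumulative monodromy around all $c$ focus-focus values is $T^c$, so the infinite sequences $\{v_i\}_{i\in\Z}$ produced by the two cuts differ by a shift of the index, i.e.\ lie in the same $\sim$-class.

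Third, given two helix neighborhoods $U_1', U_2'$, their intersection contains a smaller helix neighborhood $U_3'$. By Theorem~\ref{thm_sanstraightening} applied to $U_3'$, one obtains straightening maps on each $F^{-1}(U_i')$ whose restrictions to $F^{-1}(U_3' \setminus \ell)$ agree up to the ambiguity addressed in the first step; the piecewise linear boundary arcs, and hence their inward normal vectors, are then identical. The main obstacle is the focus\--focus step: one must invoke V\~u Ng\d{o}c's monodromy theorem with correctly tracked orientation, so that each focus\--focus crossing contributes exactly one factor of $T$ (not $T^{-1}$) and the total monodromy after encircling all $c$ singularities is precisely $T^c$. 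This consistency is what makes $c$ (the complexity) an intrinsic invariant of the helix rather than an artifact of the construction.
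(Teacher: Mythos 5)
Your proposal is correct and follows essentially the same route as the paper: reduce to a common helix neighborhood, invoke V\~u Ng\d{o}c's uniqueness of the straightening map (two toric momentum maps with the same first component differ by $T^k$ and a translation), and use the fact that the total monodromy around all $c$ focus-focus values is $T^c$, which is exactly absorbed by the extension rule $v_{i+d}=T^cv_i$ and the equivalence relation $\sim$. The only organizational difference is that you vary the three choices one at a time, whereas the paper treats the cut and the straightening map together by comparing the transition constants $k_A,k_B$ on the two components of $U_1\cap U_2$ and showing $k_B-k_A=c$; the mathematical content is the same.
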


\begin{proof}
 Let $(M,\om,F)$ be a symplectic semitoric manifold with $d$ elliptic-elliptic
 points and $c$ focus-focus points.  Any semitoric helix produced from
 the above construction must have length $d$ and complexity $c$. Now let
 $\mathcal{H}_j = (c,d,[\{v_i^j\}_{i\in\Z}]$ be a semitoric helix
 constructed from $(M,\om,F)$ as above using a set $U_j'$, line segment
 $\ell_j$, and straightening map $f_j$ for $j=1,2$.  We will show 
 $\mathcal{H}_1 = \mathcal{H}_2$.
 
 We may assume that $U_1' = U_2'$ by replacing each with $U' = U_1'\cap U_2'$
 and using the restricted straightening maps. Now
 $
  U_1 = U'\setminus\ell_1\textrm{ and }U_2 = U'\setminus \ell_2
 $
 and, assuming  $U\cap\ell_1 \neq U\cap\ell_2$, the set
 $U_1\cap U_2$ has two connected components
 (if $U\cap\ell_1 = U\cap\ell_2$ the remainder of the proof simplifies).
 Denote these two
 components by $A$ and $B$ ordered so that $v_0^1, \ldots, v_k^1$
 are the inwards pointing normal vectors of the boundary of $f_1(A)$
 and $v_{k+1}^1, \ldots, v_{d}^1$ are the inwards pointing normal
 vectors of $f_1(B)$.
 
 Since $A\subset U_j$ for $j=1,2$ we see 
 $
  f_j|_A\circ F\colon F^{-1}(A)\to\R^2
 $
 is a toric momentum map for $j=1,2$.
 Thus, by \cite[Theorem 3.8]{VN2007}
 there exists $k_A\in\Z$ and $x_A\in\R^2$ such that
 \begin{equation}\label{eqn_sthlxuniquemfdA}
  f_1|_A = T^{k_A}\circ f_2|_A + x_A
 \end{equation}
 and similarly there exists $k_B\in\Z$
 and $x_B\in\R^2$ such that
 \begin{equation}\label{eqn_sthlxuniquemfdB}
  f_1|_B = T^{k_B}\circ f_2|_B + x_B. 
 \end{equation}
 Thus,
 $
  v_i^1 =T^{k_A}v_{i+d-k}^2\,\,\textrm{for }i=0, \ldots, k
 $ 
 and
 $  
  v_i^1 =T^{k_B}v_{i-k-1}^2\,\,\textrm{for }i=k+1, \ldots, d.
 $
 Now, $\{v_i^2\}_{i\in\Z}$ is equivalent in $\Z^2/\sim$ to 
 $\{\widetilde{v}_i^2\}_{i\in\Z}$ defined by
 $
  \widetilde{v}_i^2 = T^{k_A}v_{i+d-k}^2
 $
 and thus
 $
  v_i^1 = \widetilde{v}_i^2 \textrm{ for }i=0, \ldots, k
 $
 and
\[
  v_i^1 = T^{k_B}v_{i-k}^2
        = T^{k_B}T^{-c}v^2_{i-k+d}
        = T^{k_B}T^{-c}T^{-k_A}\widetilde{v}^2_{(i-k+d)-d+k}
        = T^{k_B-k_A-c}\widetilde{v}_i^2
\]
 for $i=k, \ldots, d$
 because $\mathcal{H}_2$ has complexity $c$.
 Thus, $v_i^1 = \widetilde{v}_i^2$ for all $i\in\Z$
 if $k_B-k_A = c$, in which case the proof is complete.
 
 By Equation~\eqref{eqn_sthlxuniquemfdA}
 $f_1$ and $T^{k_A}$ differ by a translation on $A$ so
 $
  f_1|_B = T^c (T^{k_A}\circ f_2)|_B + x_B'
 $
 for some $x_B'\in\R^2$ because this is precisely
 the effect of the monodromy of the set
 $U_1\cap U_2$ encircling all of the $c$
 focus-focus points of the system (see~\cite[Theorem 3.8]{VN2007}).
 Combining this with Equation~\eqref{eqn_sthlxuniquemfdB}
 we see that
 $
  T^c\circ T^{k_A} \circ f_2|_B = T^{k_B}\circ f_2|_B + x_B''
 $
 for some $x_B''\in\R^2$
 and thus $k_B-k_A = c$ as desired.
\end{proof}

Recall Proposition~\ref{prop_helixisinvrt},
that the helix is an invariant of semitoric isomorphism
type.

\begin{proof}[Proof of Proposition~\ref{prop_helixisinvrt}]
 Let $(M,\om,F)$ and $(M',\om',F')$ be symplectic semitoric manifolds
 and let
 $\phi\colon M\to M'$ be a semitoric isomorphism,
 so there exists a smooth map $f\colon\R^2\to\R$
 with $\nicefrac{\partial f}{\partial y}\neq 0$ such
 that $\phi^*F' = (J,f(J,H))$.
 This implies they must
 each have the same number of
 focus-focus points and elliptic-elliptic points.
 Let $d\in\Z$ be the number of elliptic-elliptic points
 and let $c\in\Z$ be the number of focus-focus points.
 Let $U\subset F'(M')$
 be a helix neighborhood for $(M',\om',F')$,
 which is to say it is 
 an open subset that can be used to construct the helix
 associated to $(M',\om',F')$ as is done above, 
 and let $g\colon\R^2\to\R$ be a straightening map for $U$.
 This means there exists some $g^{(2)}\colon\R^2\to\R$
 such that $g(x,y) = (x,g^{(2)}(x,y))$
 and $g\circ F$ is a toric momentum map on $F^{-1}(U)$.
 The semitoric helix associated to $(M',\om',F')$ is
 $\mathcal{H}' = (d,c,[\{v_i\}_{i\in\Z}])$ where $v_0, \ldots, v_{d-1}$
 are the inwards pointing normal vectors of the piecewise
 linear boundary of $g(U)$ and $v_i$ for $i<0$ and $i\geq d$
 is determined by 
 Definition~\ref{def_sthelix} part~\eqref{part_extend} from
 the other vectors and the complexity.
 
 The map $\phi$ descends to the map
 $\widehat{\phi}\colon F(M)\to F'(M')$ given by
 $\widehat{\phi}(x,y) = (x, f(x,y))$ so the following
 diagram commutes:
 \begin{equation*}
 \begin{tikzcd}
    M \ar{r}{\phi}\ar{d}[swap]{F} & M' \ar{d}{F'} \\
   F(M) \ar{r}{\widehat{\phi}} & F'(M')
 \end{tikzcd}
\end{equation*}
The set $\widehat{\phi}^{-1}(U)$ is a helix neighborhood for
$(M,\om,F)$.
Let $\widetilde{g} = g\circ\widehat{\phi}$
and notice that $\widetilde{g}$ is a straightening map
for $\widehat{\phi}^{-1}(U)\subset F(M)$.
Indeed, $\widetilde{g}$
clearly preserves the first component (it is the composition
of maps which preserve the first component) and the second
component of $\widetilde{g}\circ F\colon M\to\R$,
which is 
$g^{(2)}(J, f(J,H))$,
has $2\pi$\--perioidic Hamiltonian flow because 
$g^{(2)}(J, f(J,H)) = \phi^*(g^{(2)}(J',H'))$,
$\phi$ is a symplectomorphism, 
and $g$ is a straightening map for $(M',\om',F')$ so
$g^{(2)}(J',H')$ has $2\pi$\--periodic flow.
The inwards pointing normal vectors of the piecewise
linear portion of the boundary of
$g(\hat{\phi}^{-1}(U))$
generate the helix for $(M,\om,F)$, which we denote $\mathcal{H}$.
Thus, $\mathcal{H} = \mathcal{H}'$ 
because $g(\hat{\phi}^{-1}(U)) = g(U)$,
and since the helix constructed in this
way is unique by Lemma~\ref{lem_sthelix_unique_mfd2}
the helix for $(M,\om,F)$ agrees with
the helix for $(M',\om',F')$.
\end{proof}

To prove that each possible semitoric helix 
is associated to some
symplectic semitoric manifold we need to 
invoke the
classification of symplectic semitoric manifolds, particularly the
semitoric polygon invariant.

\subsection{Delzant semitoric polygons}
\label{sec_Dezsemitoricpoly}

Here we quickly review the definition of a Delzant semitoric polygon
from~\cite{PeVNacta2011} so we can explain the relationship between
semitoric polygons and the semitoric helix.

Let $\pi\colon\R^2\to\R$ denote projection onto the first component
and for any $\la\in\R$ let $\ell_{\la} = \pi^{-1}(\la)$.
A \emph{weighted polygon of complexity $c\in\Z_{\geq 0}$} is a
triple
$
 \De_w = (\De, (\ell_{\la_{j}})_{j=1}^c,(\ep_j)_{j=1}^c)
$
where
\begin{enumerate}[nosep]
 \item $\De\subset\R^2$ is a convex, closed (possibly non-compact), rational
  polygon;
 \item $\ep_j\in\{\pm 1\}$ for $j=1, \ldots, c$;
 \item $\la_j \in \mathrm{int}(\pi(\De))$ for $j=1, \ldots, c$;
 \item $\la_1<\la_2<\ldots <\la_c$.
\end{enumerate}
Let $G_c = \{\pm 1\}^c$ and $\mathcal{G} = \{(T^t)^k : k\in\Z\}$ where
$T^t$ is the transpose of the matrix $T$ given in Equation~\eqref{eqn_ST}.
Given $k\in\Z$ and any vertical line $\ell=\ell_{\la}$, $\la\in\R$, define
 $t_{\ell}^k:\R^2\to\R^2$ by
 \[
  t_{\ell}^k (x,y) = \left\{ \begin{array}{ll}(x,y) ,& x\leq \la\\ (x, k(x-\la)+y) ,& x>\la \end{array}\right.
 \]
and for $\vec{u} = (u_j)_{j=1}^c \in\Z^c$ and $\vec{\la} = (\la_j)_{j-1}^c \in\R$ let
 $
  t_{\vec{u},\vec{\la}}=t^{u_1}_{\ell_{\la_1}} \circ \cdots \circ t^{u_{c}}_{\ell_{\la_c}}.
 $
The group $G_c\times\mathcal{G}$ acts on a weighted polygon by
\[
 ((\ep_j')_{j=1}^c, (T^t)^k) \cdot \left(\De, (\ell_{\la_{j}})_{j=1}^c,(\ep_j)_{j=1}^c\right) = \left(t_{\vec{u},\vec{\la}}\circ (T^t)^k(\De), (\ell_{\la_{j}})_{j=1}^c,(\ep_j' \ep_j)_{j=1}^c\right),
\]
where $\vec{u} = \left(\nicefrac{(\ep_j - \ep_j \ep_j')}{2}\right)_{j=1}^c$.
A weighted polygon is called \emph{admissible} if this
action of $G_c\times\mathcal{G}$ preserves its convexity.  
Let $\mathcal{W}\mathrm{Polyg}_c(\R^2)$
denote the space of all admissible weighted polygons of 
complexity $c\in\Z_{\geq0}$.
An element of $\mathcal{W}\mathrm{Polyg}_c(\R^2)\,/\,G_c\times\mathcal{G}$ is known as
a \emph{semitoric polygon}.

Let
$
 \De_w = (\De, (\ell_{\la_{j}})_{j=1}^c,(\ep_j)_{j=1}^c)
$
be a weighted polygon of complexity $c\in\Z_{\geq0}$.
Let $p\in\De$ be a vertex and let $v,w\in\Z^2$ be the inwards pointing
 normal vectors to the edges adjacent to $p$ of minimal length ordered
 so that $\det(v,w)> 0$.
 Such vectors exist because $\De$ is rational.
 The vertex $p$ satisfies:
 \begin{enumerate}[noitemsep]
  \item the \emph{Delzant condition} if $\det(v,w)=1$;
  \item the \emph{hidden Delzant condition} if $\det(Tv,w) = 1$;
  \item the \emph{fake condition} if $\det(Tv,w) = 0$.
 \end{enumerate}

 Let
 \[
  \partial^{\mathrm{top}}\De = \big\{(x,y): x\in\pi(\De),\, y = \sup\{y_0\in\R:(x,y_0)\in\De\}\big\}
 \]
 denote the top boundary of $\De$.

 \begin{definition}(\cite{PeVNacta2011})
  Let
  $
   [\De_w]\in\mathcal{W}\mathrm{Polyg}_c(\R^2)\,/\,G_c\times\mathcal{G}
  $
  be a semitoric polygon and suppose that $\De_w$ is a representative
  of the form
  $
   \De_w = (\De, (\ell_{\la_{j}})_{j=1}^c,(+1)_{j=1}^c).
  $
  Then $[\De_w]$ is a \emph{Delzant semitoric polygon} if
  \begin{enumerate}[nosep]
   \item $\De\cap\ell_\la$ is either compact or empty for all $\la\in\R$;
   \item each point in $\partial^{\mathrm{top}}\De\cap\ell_{\la_{j}}$ satisfies
    either the hidden Delzant or fake condition, and is hence
    known as a hidden or fake corner, respectively, for $j=1, \ldots, c$;
   \item all other vertices of $\De$ satisfy the Delzant condition and 
    are known as Delzant corners.
  \end{enumerate}
 We say $[\De_w]$ is compact whenever $\De$ is compact for one, and hence all,
 representatives.
 \end{definition}

 Every symplectic semitoric manifold determines a Delzant semitoric polygon~\cite{PeVNacta2011,VN2007},
 which is compact if the manifold is compact.

\subsection{Construction of the helix from the polygon invariant}
\label{sec_polytohelix}

The helix can also be constructed from the V\~{u} Ng\d{o}c polygon
associated to the symplectic semitoric manifold~\cite{VN2007}.
Here we give a brief outline of that construction,
shown in Figure~\ref{fig_helixfrompoly}.
Let $(M,\om,F)$ be a compact symplectic semitoric manifold.

\begin{enumerate}[label = Step \arabic*:, align = left]
 \item{\bf Construct polygon:} Associated to $(M,\om,F)$ is a semitoric
   polygon \[[\De_w] = [(\De,(\ell_{\la_j})_{j=1}^c, (\ep_j)_{j=1}^c)]\] 
   as in~\cite[Theorem 3.9]{VN2007} and described above
   in Section~\ref{sec_Dezsemitoricpoly}.
 \item{\bf Construct semitoric fan:} $\De$ is rational, so take the 
   collection of inwards pointing
   integer normal vectors $w_0, \ldots, w_{m-1}$ of minimal length to its edges
   (this is known as a \emph{semitoric fan}, see~\cite{KaPaPe2016}).
   These can be chosen so that the corner between $w_{m-1}$ and $w_0$
   is not on any of the vertical lines $\ell_{\la_j}$;
 \item{\bf Correct for monodromy effect:} Each consecutive pair of vectors
   $(w_j, w_{j+1})$ is labeled as either fake, hidden, or Delzant
   depending on the type of corner of $\De$ it corresponds to.
   For each $j$ such that $(w_j, w_{j+1})$
   is either hidden or fake replace $w_{j+1}, \ldots, w_{m-1}$
   by
   \[
    Tw_{j+1}, \ldots, Tw_{m-1}.
   \]
   Label the new list of vectors
   $w_0', \ldots, w_{m-1}'$;
 \item{\bf Remove repeated vectors:} Now each pair $(w_i', w_{i+1}')$ 
   either satisfies $\det(w_i', w_{i+1}')=1$
   or $w_i' = w_{i+1}'$.  For each $j$ such that $w_j' = w_{j+1}'$ remove
   $w_{j+1}'$ from the list and when all repeated vectors are
   removed denote the remaining vectors
   by $v_0, \ldots, v_{d-1}$.  Notice $\det(v_i, v_{i+1})=1$ for
   all $i=0, \ldots, d-2$;
 \item{\bf Extend to helix:} By condition~\eqref{part_extend} of Definition~\ref{def_sthelix}
  there exists a unique helix of length $d$ and complexity $c$
  (the number of focus-focus points of the original symplectic semitoric manifold)
  with the given $v_0, \ldots, v_{d-1}$ from the previous step.
\end{enumerate}

\begin{figure}
 \centering
 \includegraphics[width = 400pt]{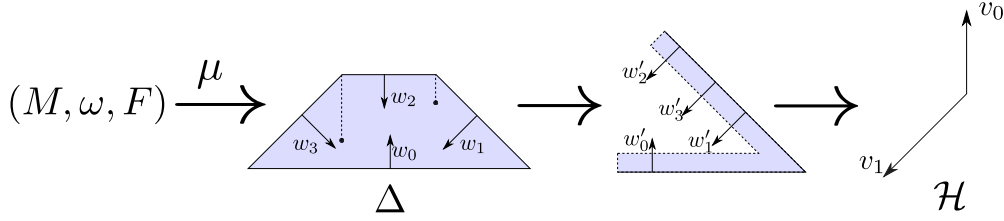}
 \caption{The helix can be recovered from the semitoric polygon
 invariant by "unwinding" the polygon to correct for the effect
 of the focus-focus points and then removing the resulting
 repeated vectors.}
 \label{fig_helixfrompoly}
\end{figure}

\begin{remark}\label{rmk_consecutivefakes}
 Let $[(\De,(\ell_{\la_j}, \ep_j, k_j)_{j=1}^c)]$ be a compact semitoric polygon
 which has no hidden corners and such that all of the fake corners are
 consecutive (while traversing the boundary of $\De$)
 and let $v_0, \ldots, v_{d-1}$
 be the primitive integral inwards pointing normal vectors 
 to every edge of $\De$ which is adjacent to at least one Delzant corner.
 The associated semitoric helix is the unique helix of length
 $d$ and complexity $c$ with the given
 $v_0, \ldots, v_{d-1}$.
\end{remark}

\begin{remark}
 Proposition~\ref{prop_helixisinvrt} also follows from the fact that the
 semitoric polygon invariant is an invariant of the semitoric isomorphism
 type and the above construction of the helix from the semitoric polygon,
 but we have chosen to prove Proposition~\ref{prop_helixisinvrt} in a way
 which is independent of the existence of the semitoric polygon invariant.
\end{remark}

\subsection{Surjectivity of the helix map}

\begin{lemma}\label{lem_helixtopoly}
 Given any semitoric helix $\mathcal{H}$
 there exists a symplectic semitoric manifold $(M,\om,F)$
 such that
 $
  \helixmap(M,\om,F)=\mathcal{H}.
 $
\end{lemma}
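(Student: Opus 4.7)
The plan is to construct, for the given semitoric helix $\mathcal{H}=(d,c,[\{v_i\}_{i\in\Z}])$, a compact Delzant semitoric polygon $\De_w$ whose associated helix (via the algorithm of Section~\ref{sec_polytohelix}) is exactly $\mathcal{H}$, and then to invoke the existence part of the Pelayo--V\~u Ng\d{o}c classification of compact symplectic semitoric manifolds~\cite{PeVNacta2011} to produce a compact symplectic semitoric manifold $(M,\om,F)$ whose semitoric polygon is $[\De_w]$. If $c=0$ then $\mathcal{H}$ is equivalent to a toric fan and $(M,\om,F)$ can be taken to be the corresponding symplectic toric manifold produced by the classical Delzant construction, so I focus on the case $c>0$.

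To build the polygon, I would fix a representative $\{v_i\}_{i\in\Z}$ of the class and use $v_0,\ldots,v_{d-1}$ as the inward primitive integer normals of the $d$ Delzant-adjacent edges, arranged in counterclockwise order. Since $v_d=T^cv_0\neq v_0$ in general, these vectors do not close up on their own, and I would fill the remaining angular gap (between the edges with normals $v_{d-1}$ and $v_0$) by inserting $c-1$ intermediate ``fake-only'' edges whose normals are determined by repeated action of $T$ (in the convention dual to Step~3 of Section~\ref{sec_polytohelix}), together with $c$ consecutive fake corners enforcing the correct monodromic twist. I would then choose $c$ vertical lines $\ell_{\la_1},\ldots,\ell_{\la_c}$ passing through the fake-corner vertices, set every $\ep_j=+1$, and pick strictly positive edge lengths making the resulting polygon compact and convex.

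Three conditions must then be verified. First, every non-fake vertex satisfies the Delzant condition $\det(v_i,v_{i+1})=1$; this is immediate from the helix axioms. Second, the weighted polygon is admissible: the $G_c\times\mathcal{G}$-action only modifies the portion of $\De$ lying above the fake region, and by choosing the fake-only edges sufficiently short this modification stays inside a controlled convex region. Third, since the resulting polygon has no hidden corners and all its $c$ fake corners are consecutive along the boundary, Remark~\ref{rmk_consecutivefakes} applies directly, so its associated semitoric helix is the unique helix of length $d$ and complexity $c$ with Delzant normals $v_0,\ldots,v_{d-1}$, namely $\mathcal{H}$ itself; this gives $\helixmap(M,\om,F)=\mathcal{H}$.

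The main obstacle is the geometric realization step: showing that the prescribed edge directions admit strictly positive edge lengths producing a closed convex polygon with each fake-corner vertex at a distinct $x$-coordinate in the interior of $\pi(\De)$. The closing condition is a linear system in the edge lengths, and I expect positivity to be handled by first fixing a convex polygon $\De_0$ that realizes the $d$ Delzant edges (closing the gap with a single auxiliary segment) and then performing a local surgery near that gap to insert the $c$ fake corners and their connecting edges, choosing their lengths small enough that convexity is preserved and that the $c$ fake-corner vertices project to distinct values of the first coordinate. Once this surgery is carried out, the Pelayo--V\~u Ng\d{o}c realization theorem supplies the desired $(M,\om,F)$.
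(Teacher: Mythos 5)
Your proposal follows essentially the same route as the paper: realize the helix by a compact Delzant semitoric polygon whose Delzant edges have inward normals $v_0,\ldots,v_{d-1}$ and whose remaining angular gap is closed by consecutive fake corners generated by powers of $T$, and then invoke the existence half of the Pelayo--V\~{u} Ng\d{o}c classification (\cite[Theorem 4.6]{PeVNacta2011}) to produce $(M,\om,F)$. The paper simply asserts the existence of a closed convex polygon with the prescribed normals and reads off the associated helix directly, whereas you additionally flag the edge-length, convexity, admissibility, and distinct-$\la_j$ verifications and route the final identification through Remark~\ref{rmk_consecutivefakes}; these are refinements of, not departures from, the paper's argument.
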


\begin{proof}
  Let $\mathcal{H} = (d,c,[\{v_i\}_{i\in\Z}])$ be a semitoric helix.
  Define a collection of vectors
  $w_0, \ldots, w_{d+c}$ by
  $
   w_i = v_i \textrm{ for }i=0, \ldots, d-1
  $
  and
  $
   w_i = T^{i-d+1}v_{d-1}\textrm{ for }i=d, \ldots, d+c.
  $
  Then $\det(w_i, w_{i+1}) = 1$ for $i=0, \ldots, d-1$,
  $
   \det(Tw_i, w_{i+1}) = \det(Tw_i,Tw_i) = 0
  $
  for $i=d, \ldots, d+c-1$, and $w_0 = w_{d+c}$ by
  the periodicity requirement on the helix $\mathcal{H}$.
  The vectors $w_0, \ldots, w_{d+c-1}$ are arranged
  counter-clockwise so there exists a polygon $\De\subset\R^2$
  with $d+c$ edges
  which has these as inwards pointing normal vectors.
  The polygon $\De$ has $d$ Delzant corners
  $c$ fake corners, and since $T$ does not change the
  $y$-value of a vector we see that either all of the fake
  corners are on the top boundary of $\De$ or all of the fake
  corners are on the bottom boundary of $\De$.
  Let $\la_i$ be the horizontal position of the $i^{\textrm{th}}$
  fake corner and we may number these so that
  $\la_1<\la_2<\ldots <\la_c$
  since each vertical line intersects the top and bottom
  boundaries at most once each.  If the fake corners are on
  the top boundary let $\ep_j = +1$ for $j=1, \ldots, c$
  and otherwise let $\ep_j = -1$ for $j=1, \ldots, c$.
  Then,
  $
   \De_w = [(\De, (\ell_{\la_j})_{j=1}^c, (\ep_j)_{j=1}^c)]
  $
  is a Delzant semitoric polygon with
  associated semitoric helix $\mathcal{H}$.
  By~\cite[Theorem 4.6]{PeVNacta2011}
  there exists a symplectic semitoric manifold with $[\De_w]$
  as its semitoric polygon.
 \end{proof}

\begin{remark}
Lemma~\ref{lem_helixtopoly} shows that the map
$\helixmap\colon\semitoric\to\helixspace$
is
surjective by producing a right inverse, but this map
is not injective.
In terms of the Pelayo-V\~{u} Ng\d{o}c invariants this is because
the helix does not encode any information about the Taylor
series invariant, the volume invariant, the twisting index,
the horizontal position of the focus-focus points,
or the lengths of the edges of the semitoric polygon.
\end{remark}

\section{Semitoric helices and \tex{$\sltz$}}
\label{sec_algebraictech}

In this Section we prove Proposition~\ref{prop_correspondence}, which
is the tool we use to translate questions about semitoric helices
into questions about words on letters $S$ and $T$.

\begin{lemma}\label{lem_listofint}
 Given any semitoric helix $\mathcal{H}=(d,c,[\{v_i\}_{i\in\Z}])$
 there exists a list of
 integers $(a_0, \ldots, a_{d-1})\in\Z^d$ such that
 \begin{equation}
  \label{eqn_virecurrence}
  a_{i\, \mathrm{mod} d} v_{i+1} = v_i + v_{i+2}
 \end{equation}
 for all $i\in\Z$.  Furthermore, given $v_0$, $v_1$,
 and $(a_0, \ldots, a_{d-1})$ the helix can be recovered.
\end{lemma}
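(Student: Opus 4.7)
My approach is to exploit the fact that $\det(v_i,v_{i+1}) = 1$ forces $(v_i,v_{i+1})$ to be an ordered $\Z$-basis of $\Z^2$. First I would expand $v_{i+2}$ in this basis: there exist unique $\alpha_i,\beta_i \in \Z$ with
\[
v_{i+2} = \alpha_i v_i + \beta_i v_{i+1}.
\]
Taking determinants against $v_{i+1}$ on the left gives
\[
\det(v_{i+1},v_{i+2}) = \det(v_{i+1},\alpha_i v_i + \beta_i v_{i+1}) = -\alpha_i\det(v_i,v_{i+1}) = -\alpha_i,
\]
and by the helix condition this equals $1$, so $\alpha_i = -1$. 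Rearranging yields $\beta_i v_{i+1} = v_i + v_{i+2}$; I define $a_i := \beta_i$, which also equals $\det(v_i,v_{i+2})$.

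Next I would show that $a_i$ only depends on $i \bmod d$, so that a finite list $(a_0,\ldots,a_{d-1})$ suffices. By part~\eqref{part_extend} of Definition~\ref{def_sthelix}, $v_{i+d} = T^c v_i$, and applying $T^c$ to the identity $a_i v_{i+1} = v_i + v_{i+2}$ yields
\[
a_i v_{i+d+1} = a_i T^c v_{i+1} = T^c v_i + T^c v_{i+2} = v_{i+d} + v_{i+d+2}.
\]
Since also $a_{i+d} v_{i+d+1} = v_{i+d} + v_{i+d+2}$ and $v_{i+d+1} \neq 0$, we get $a_{i+d} = a_i$, so the $a_i$'s descend to a list indexed by $\Z/d\Z$.

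For the recovery statement, I would simply rewrite the relation as the recurrence
\[
v_{i+2} = a_i v_{i+1} - v_i,
\]
which, given $v_0, v_1$ and $(a_0,\ldots,a_{d-1})$, determines $v_i$ for every $i \ge 0$ (and for $i < 0$ by solving backwards). This pins down the equivalence class $[\{v_i\}_{i\in\Z}]$; to recover the complexity $c$, I would read it off from the relations $v_d = T^c v_0$ and $v_{d+1} = T^c v_1$. Since $\det(v_0,v_1) = 1$ these two vectors form a basis of $\R^2$, so $T^c$ is uniquely determined by its action on them, and hence so is $c$. The length $d$ is already implicit in the length of the given tuple of integers, completing the recovery.

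I do not expect a genuine obstacle here: the entire argument is an exercise in $2\times 2$ determinants together with the periodicity axiom. The only mildly delicate point is the recovery of $c$, where one must observe that using both $v_0$ and $v_1$ (not just $v_0$, which could be fixed by $T$) is necessary and sufficient, but this is automatic from $v_0,v_1$ being a $\Z$-basis.
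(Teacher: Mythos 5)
Your proof is correct and follows essentially the same route as the paper's: expand $v_{i+2}$ in the basis $(v_i,v_{i+1})$ and use the determinant condition to force the coefficient of $v_i$ to be $-1$ (the paper phrases this via the matrices $A_i=[v_i,v_{i+1}]$ and $\det(A_i^{-1}A_{i+1})=1$, which is the same computation). Your explicit check that $a_{i+d}=a_i$ via the periodicity axiom, and the recovery of $c$ from the action of $T^c$ on the basis $(v_0,v_1)$, are details the paper leaves implicit but are welcome additions.
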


\begin{proof}
Let $\mathcal{H}=(d,c,[\{v_i\}_{i\in\Z}])$.
Let $A_i = [v_i, v_{i+1}]$ and write $v_{i+2}$ in the $(v_i, v_{i+1})$ basis as
$
 v_{i+2} = b_i v_i + a_i v_{i+1},
$
for $a_i, b_i\in\Z$.
Thus,
\[
 A_{i} \matr{0}{b_i}{1}{a_i}= A_{i+1}
\]
and since $A_i, A_{i+1}\in\sltz$ we see the determinant of each side is 1 so $b_i = -1$ and $v_{i+2}+v_i = a_i v_{i+1}$ as desired. Conversely, given $v_0, v_1$ and $(a_0, \ldots, a_{d-1})$ the
helix can be recovered by using the recurrence relation Equation~\eqref{eqn_virecurrence}.
\end{proof}

\begin{definition}\label{def_associatedints}
The $(a_0, \ldots, a_{d-1})\in\Z^2$ in Lemma~\ref{lem_listofint} are the \emph{associated integers}
to $\mathcal{H}$.
\end{definition}

Recall $\widetilde{\sltr}$ denotes the universal cover of
$\sltr$ with base point at the identity, so $\alpha\in\widetilde{\sltr}$ is a
continuous map $\alpha\colon[0,1]\to\sltr$ satisfying
$\alpha(0) = I$.
The group $G$ is isomorphic to the preimage of
$\sltz$ in $\widetilde{\sltr}$~\cite[Proposition 3.7]{KaPaPe2016} via
the homomorphism $\rho\colon G\to\widetilde{\sltz}$
generated by its action on $S$ and $T$ given in Equation~\ref{eqn_rhodef}.
The operation in $G$ is concatenation of paths.
If $\alpha, \beta\in\widetilde{\sltr}$ then
$\alpha, \beta\colon [0, 1]\to\sltr$
and we define $\alpha\beta\in\widetilde{\sltr}$ by
\[
 \alpha \beta (t) = \left\{\begin{array}{ll}\alpha(2t) ,& 0\leq t \leq \nicefrac{1}{2}\\\alpha(1)\beta(2t-1) ,& \nicefrac{1}{2} \leq t \leq 1.\end{array}\right.
\]
That is, the path $\alpha\beta$ is obtained by traveling
first along the path $\alpha$ and then along the path
produced by multiplying each element of the path $\beta$
on the left by $\alpha(1)$.  
It turns out that the path
produced by traveling first along $\beta$ and then along
$\alpha$ multiplied on the right by $\beta(1)$ is homotopic to $\alpha\beta$.
The next result follows from the fact that the fundamental
group of a topological group is abelian (see~\cite[Section 3.C, Exercise 5]{Hatcher}),
but we prove it here for completeness.

\begin{lemma}\label{lem_homotopy}
 If $\alpha,\beta\in\widetilde{\sltr}$ then the paths
 in $\sltr$ from $I$ to $\alpha(1)\beta(1)$ given by
 \[
  \gamma_0(t) = \left\{\begin{array}{ll}\beta(2t),&\,0\leq t \leq \nicefrac{1}{2}\\\alpha(2t-1)\beta(1),&\,\nicefrac{1}{2}<t\leq 1\end{array}\right.
 \]
 and
 \[
  \gamma_1(t) = \left\{\begin{array}{ll}\alpha(2t),&\,0\leq t \leq \nicefrac{1}{2}\\\alpha(1)\beta(2t-1),&\,\nicefrac{1}{2}<t\leq 1\end{array}\right.
 \]
 are homotopic.

\end{lemma}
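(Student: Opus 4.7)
The plan is to realize both $\gamma_0$ and $\gamma_1$ as images of the unit square under a single continuous map, and then transport any homotopy of paths in the square down to $\sltr$. Concretely, I would define
\[
 H\colon [0,1]\times[0,1] \to \sltr, \qquad H(s,t) = \alpha(s)\beta(t),
\]
which is continuous because multiplication in $\sltr$ is continuous and $\alpha,\beta$ are continuous. Note $H(0,0) = \alpha(0)\beta(0) = I$ and $H(1,1) = \alpha(1)\beta(1)$.

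Next I would identify the two paths $\gamma_0$ and $\gamma_1$ as $H$ applied to two specific paths in $[0,1]^2$ from $(0,0)$ to $(1,1)$. Let
\[
 p_0(t) = \begin{cases} (0,2t),&0\le t\le \tfrac{1}{2}\\ (2t-1,1),&\tfrac{1}{2}\le t\le 1,\end{cases}
 \qquad
 p_1(t) = \begin{cases} (2t,0),&0\le t\le \tfrac{1}{2}\\ (1,2t-1),&\tfrac{1}{2}\le t\le 1,\end{cases}
\]
so $p_0$ traces the left edge then the top edge, and $p_1$ traces the bottom edge then the right edge. A direct substitution shows $H\circ p_0 = \gamma_0$ and $H\circ p_1 = \gamma_1$, using that $\alpha(0)=\beta(0)=I$.

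Now the key point: since $[0,1]^2$ is convex, the straight-line homotopy $K(t,u) = (1-u)p_0(t) + u p_1(t)$ is a continuous map $[0,1]\times[0,1]\to[0,1]^2$ with $K(\cdot,0) = p_0$, $K(\cdot,1) = p_1$, and $K(0,u) = (0,0)$, $K(1,u) = (1,1)$ for all $u$, i.e.\ it is a homotopy of paths rel endpoints. Composing, $H\circ K$ is a homotopy in $\sltr$ from $\gamma_0$ to $\gamma_1$ fixing the endpoints $I$ and $\alpha(1)\beta(1)$, proving the lemma.

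There is no real obstacle here; the only subtlety to check is that the endpoint conditions $\alpha(0)=\beta(0)=I$ (which hold by definition of $\widetilde{\sltr}$ with basepoint at the identity) are what make $H\circ p_0$ and $H\circ p_1$ actually equal $\gamma_0$ and $\gamma_1$ on the initial halves of their domains. Everything else is formal consequence of the continuity of group multiplication and convexity of the square.
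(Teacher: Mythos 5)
Your proof is correct and is essentially the paper's argument: the paper's explicit homotopy $\gamma_s$ is exactly the image under your map $H(s,t)=\alpha(s)\beta(t)$ of a family of staircase paths in the square (its figure caption even says a point $(x,y)$ represents $\alpha(x)\beta(y)$), so both proofs reduce the claim to the homotopy of two corner-to-corner paths in the convex square. Your use of the straight-line homotopy in place of the staircase family is only a cosmetic difference.
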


\begin{proof}
 A continuous homotopy between them is given by
 \[
  \gamma_s(t) = \left\{\begin{array}{ll}\alpha(2t),&\,0\leq t\leq \nicefrac{s}{2}\\
                                      \alpha(s)\beta(2t-s) ,&\, \nicefrac{s}{2}\leq t \leq \frac{1+s}{2}\\
                                      \alpha(2t-1)\beta(1) ,&\,\frac{1+s}{2}\leq t \leq 1
                       \end{array}
                \right.
 \]
 for $0\leq s \leq 1$, which is shown in Figure~\ref{fig_homotopy}.
 Indeed, $\gamma_s$ is continuous because
 $\gamma_s(\nicefrac{s}{2}) = \alpha(s)$ since $\beta(0) = I$ and
 $\gamma_s(\nicefrac{(1+s)}{2}) = \alpha(s)\beta(1)$.
 It is left to the reader to check that it is a homotopy
 from $\gamma_0$ to $\gamma_1$.
\end{proof}

\begin{figure}
 \centering
 \includegraphics[width=250pt]{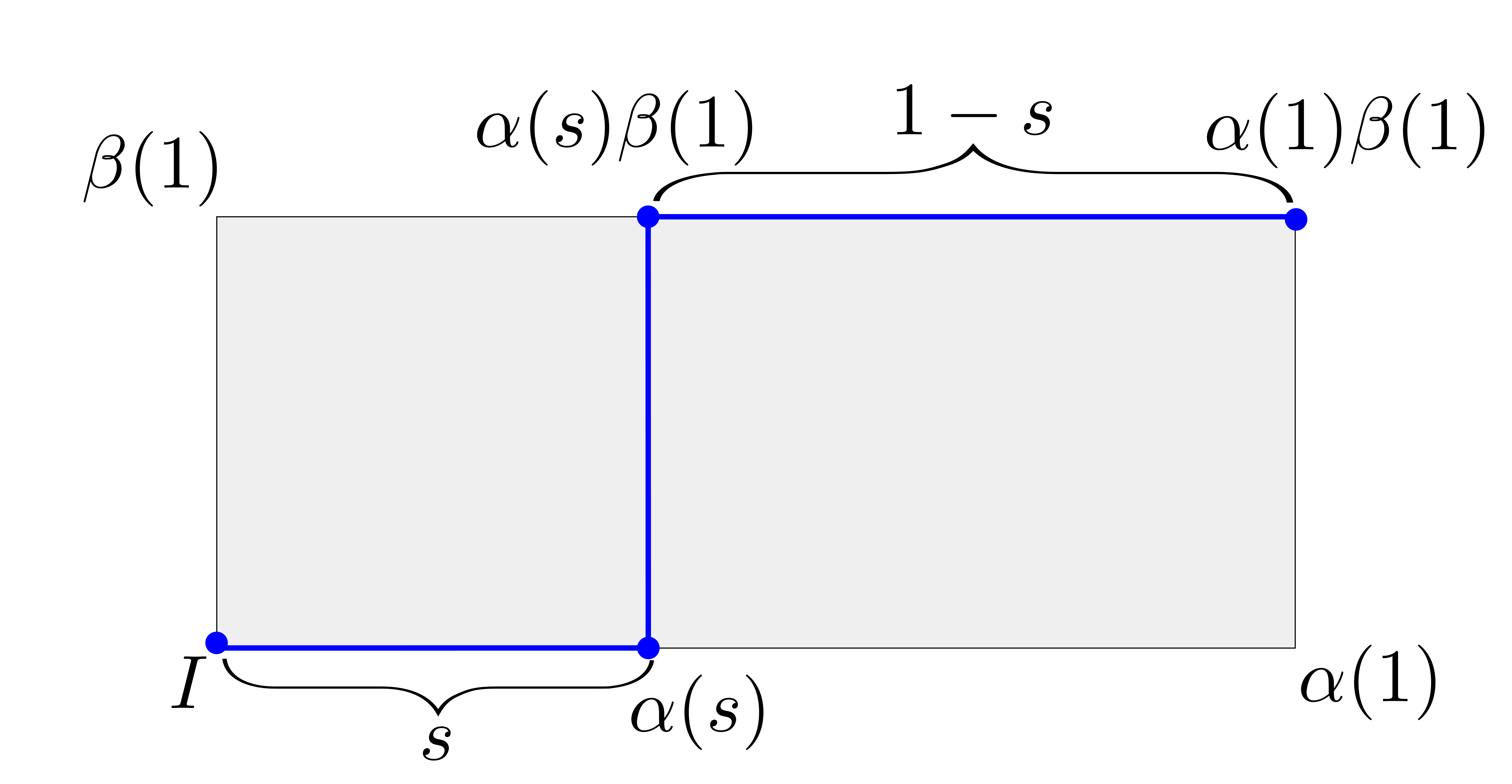}
 \caption{The homotopy from the proof of Lemma~\ref{lem_homotopy}.
 A point $(x,y)$ in the above plane represents $\alpha(x)\beta(y)\in\sltr$.}
 \label{fig_homotopy}
\end{figure}

 Recall the map $\mathrm{pr}\colon \sltr\to(\R^2)^*$, where $(\R^2)^*=\R^2\setminus\{(0,0)\}$, given
 by $\mathrm{pr}([v_1,v_2])=v_1$.  Since
 $
  \pi_1(\sltr) \cong \pi_1((\R^2)^*) \cong \Z
 $
 and
 \[
  \mathrm{pr}\matr{\mathrm{cos}(2\pi t)}{-\mathrm{sin}(2\pi t)}{\mathrm{sin}(2\pi t)}{\mathrm{cos}(2\pi t)} = \vect{\mathrm{cos}(2\pi t)}{\mathrm{sin}(2\pi t)}
 \]
 for $0\leq t\leq 1$ we see $\mathrm{pr}$ sends a generator of $\pi_1(\sltr)$ to a generator of
 $\pi_1((\R^2)^*)$, so $\mathrm{pr}^*\colon\pi_1(\sltr)\to\pi_1((\R^2)^*)$ is an
 isomorphism.

\begin{definition}
Let $\theta\colon(\R^2)^*\to[0,2\pi)$ be the usual angle coordinate
from polar coordinates on $\R^2$ and let $R_\phi\colon\R^2\to\R^2$ be
rotation by the angle $\phi\in[0,2\pi)$.
We say a path $\ga\colon[0,1]\to(\R^2)^*$ \emph{travels counter-clockwise
at most one full rotation} if there exists some $\phi\in[0,2\pi)$
such that $t\mapsto\theta(R_\phi(\ga(t)))$ is an increasing
function for $t\in (0,1)$.
\end{definition}
\begin{lemma}
 \label{lem_SThomotopic}
 Let $\mathcal{H}=(d,c,[\{v_i\}_{i\in\Z}])$ be a semitoric helix
 and let $A_0 = [v_0, v_1]$.
 If $\si\in G$ is given by
 \[
  \si =_G ST^{a_0}\ldots ST^{a_{d-1}}
 \]
 then $\mathrm{pr}\big(A_0\,\rho(\si)\big)$ is homotopic to a path from
 $v_0$ to $v_{d-1}$ which
 travels counter-clockwise at most one full rotation.
\end{lemma}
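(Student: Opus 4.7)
The strategy is to decompose the path $A_0\rho(\si)$ into its $2d$ natural subpaths and to track the first column along each. On the $i$\--th $\rho(S)$\--segment the path is based at $A_i = [v_i,v_{i+1}]$, so it takes the form $A_i\rho(S)(t)$, and its first column is the elliptical arc
\[
 t\mapsto \cos(\tfrac{\pi t}{2})v_i + \sin(\tfrac{\pi t}{2})v_{i+1}.
\]
Because $\det(v_i,v_{i+1})=1>0$, this arc lies in the open half\--plane strictly counter\--clockwise of $v_i$, and its angular coordinate, in any continuous local lift, is strictly increasing in $t$. On the subsequent $\rho(T^{a_i})$\--segment the base point is $A_iS$, and since $T^{a_i}$ is upper triangular and fixes $\vect{1}{0}$, the first column stays constantly equal to $v_{i+1}$. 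Concatenating, $\mathrm{pr}(A_0\rho(\si))$ traces, in order, counter\--clockwise arcs $v_0\to v_1\to\cdots\to v_{d-1}\to v_d$, interrupted by stationary pauses at the intermediate $v_{i+1}$.

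Next I would exhibit the desired homotopy. Choose a base rotation $R_\phi$ so that the excluded ray of the rotated frame avoids every $v_i$, and use it to lift the angle of $\mathrm{pr}(A_0\rho(\si))$ to a continuous function $\widetilde\theta\colon [0,1]\to\R$. The preceding analysis makes $\widetilde\theta$ non\--decreasing, and a small homotopy that nudges the path strictly counter\--clockwise during each stationary pause makes $\widetilde\theta$ strictly increasing. Writing $\widetilde\theta_i$ for its value at the first time the path reaches $v_i$, we get
\[
 \widetilde\theta_0<\widetilde\theta_1<\cdots<\widetilde\theta_{d-1}<\widetilde\theta_d,\qquad \widetilde\theta_i\equiv \theta(R_\phi v_i)\pmod{2\pi}.
\]
It then suffices to verify $\widetilde\theta_d-\widetilde\theta_0\le 2\pi$, which will guarantee that the homotoped path travels counter\--clockwise at most one full rotation.

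For this final bound, the counter\--clockwise ordering clause in Definition~\ref{def_sthelix} gives $\widetilde\theta_{d-1}-\widetilde\theta_0<2\pi$, and the unit\--determinant condition $\det(v_{d-1},v_d)=1$ gives the added step $\widetilde\theta_d-\widetilde\theta_{d-1}<\pi$; naively summed these produce only $\widetilde\theta_d-\widetilde\theta_0<3\pi$, which is insufficient. The improvement must come from the explicit identity $v_d=T^cv_0$: since $T^c$ is a horizontal shear with $c\ge 0$, $v_d$ has the same second coordinate as $v_0$ and hence lies in the same closed half\--plane, with angle no larger than that of $v_0$ when read in the natural representative. A short case analysis on which half\--plane this is (the degenerate case $v_0$ on the $x$\--axis forcing $v_d=v_0$) then forces the monotone path from $v_0$ through the intermediate angular constraints to reach $v_d$ without completing more than one full counter\--clockwise rotation, giving $\widetilde\theta_d\le\widetilde\theta_0+2\pi$.

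The main obstacle is precisely this final inequality. The purely combinatorial Definition~\ref{def_sthelix} together with the $\det=1$ conditions on adjacent pairs is not tight enough; the argument must use the shear structure of $T^c$ together with a careful analysis of which lift of $\theta(v_d)$ is reached by the monotone angle function $\widetilde\theta$. Once it is established, reparametrizing yields a counter\--clockwise path from $v_0$ to $v_d$ of angular length at most $2\pi$, homotopic to $\mathrm{pr}(A_0\rho(\si))$ in $(\R^2)^*$ with fixed endpoints by the monotonicity of the angle lift, completing the proof.
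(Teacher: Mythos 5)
Your proposal follows essentially the same route as the paper: decompose $A_0\,\rho(\si)$ along the factors $ST^{a_i}$, observe that under $\mathrm{pr}$ each piece becomes the counter-clockwise arc $t\mapsto\cos(\tfrac{\pi t}{2})v_i+\sin(\tfrac{\pi t}{2})v_{i+1}$ (the paper folds the $T^{a_i}$-segment into a single formula for $\rho(ST^{a_i})$ rather than treating it as a stationary pause, but the projected path is the same up to reparametrization and homotopy), and then invoke the counter-clockwise ordering of the $v_i$. Where you diverge is the final bound, and there you are being more careful than the paper itself: the path genuinely ends at $v_d=T^cv_0$, not at $v_{d-1}$ (the paper's statement and its list $\ga_1,\ldots,\ga_{d-1}$ are off by one), and the counter-clockwise ordering of $v_0,\ldots,v_{d-1}$ together with $\det(v_{d-1},v_d)=1$ only bounds the total angular variation by $3\pi$, exactly as you observe; the paper simply asserts that the conclusion follows from the ordering, which does not by itself cover the last arc. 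Your proposed fix is the right one: $T^c$ with $c\ge 0$ is a shear preserving the second coordinate, so $\theta(v_d)\le\theta(v_0)$ in the natural branch (checking separately the upper half-plane, lower half-plane, and $y=0$ cases), and combining this congruence mod $2\pi$ with positivity of $\widetilde\theta_d-\widetilde\theta_0$ and the crude $<3\pi$ bound forces the lift to satisfy $\widetilde\theta_d-\widetilde\theta_0\le 2\pi$. The only shortfall of the proposal is presentational: you flag this decisive inequality as something that "must be established" rather than writing out the two or three lines that establish it, so as written it is a correct and completable sketch rather than a finished argument. Completing it as indicated yields a proof that is in fact tighter than the one printed in the paper.
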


\begin{proof}
 Let $A_i = [v_i, v_{i+1}]$ for $1\leq i\leq d-1$ and recall $A_{i} = A_{i-1} ST^{a_{i-1}}$.
 Thus,
 \[
  \mathrm{pr}\big(A_{i-1}\,\rho(ST^{a_{i-1}})\big)
 \]
 is a path from $v_i$ to $v_{i+1}$ which is homotopic to
\[
  \ga_i (t) = \mathrm{pr}\left( A_{i-1} \begin{pmatrix} \mathrm{cos}\left(\frac{\pi t}{2}\right) & -\mathrm{sin}\left(\frac{\pi t}{2}\right) + t a_{i-1} \mathrm{cos}\left(\frac{\pi t}{2}\right) \\[3pt] \mathrm{sin}\left(\frac{\pi t}{2}\right) & \mathrm{cos}\left(\frac{\pi t}{2}\right) + t a_{i-a} \mathrm{sin}\left(\frac{\pi t}{2}\right)\end{pmatrix} \right)
            = \mathrm{cos}\left(\frac{\pi t}{2}\right) v_{i-1} + \mathrm{sin}\left(\frac{\pi t}{2}\right) v_i
\]
for $0\leq t\leq 1$.
 The path $\ga_i$ travels only counter-clockwise at most one full rotation from
 $v_{i-1}$ to $v_i$ so the composition of paths $\ga_1, \ldots, \ga_{d-1}$ travels
 counter-clockwise from $v_0$ to $v_{d-1}$.  The result follows because
 $v_0, \ldots, v_{d-1}$ are arranged in counter-clockwise order.
\end{proof}

\begin{lemma}\label{lem_sthelixeqn}
 The integers
 $(a_0, \ldots, a_{d-1})\in\Z^d$ are associated to
 a semitoric helix of complexity $c\geq0$ if and only if
 \begin{equation}\label{eqn_semitorichelix}
  ST^{a_0}\ldots ST^{a_{d-1}} =_{G} S^4 X^{-1}T^c X
 \end{equation}
 for some $X\in G$.
 If $\mathcal{H}=(d,c,[\{v_i\}_{i\in\Z}])$ is a semitoric
 helix with associated integers $(a_0, \ldots, a_{d-1})$ then $A_0 = [v_0,v_1]$
 satisfies $X =_G A_0$.
\end{lemma}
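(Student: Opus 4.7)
The plan is to reduce the desired equivalence to a purely matrix-level identity in $\sltz$, lift it to $G$ using centrality of $\ker(G\to\sltz)=\langle S^4\rangle$, and then pin down the resulting power of $S^4$ via the winding number homomorphism $W$.

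For the forward direction, let $\mathcal{H}=(d,c,[\{v_i\}_{i\in\Z}])$ be a semitoric helix with associated integers $(a_0,\ldots,a_{d-1})$ (Definition~\ref{def_associatedints}). Set $A_i:=[v_i,v_{i+1}]\in\sltz$, which lies in $\sltz$ since $\det(v_i,v_{i+1})=1$ by Definition~\ref{def_sthelix}. The recurrence of Lemma~\ref{lem_listofint} is equivalent to the matrix identity $A_{i+1}=A_i\cdot ST^{a_i}$, so iterating yields $A_d=A_0\cdot ST^{a_0}\cdots ST^{a_{d-1}}$; combined with $A_d=T^c A_0$ coming from part (3) of Definition~\ref{def_sthelix}, this gives
\[
 ST^{a_0}\cdots ST^{a_{d-1}}=_{\sltz}A_0^{-1}T^cA_0.
\]
Fixing any lift $X\in G$ of $A_0$, centrality of $\langle S^4\rangle$ makes $X^{-1}T^cX\in G$ independent of the chosen lift, so the above $\sltz$-identity upgrades to $\si:=ST^{a_0}\cdots ST^{a_{d-1}}=_G S^{4m}X^{-1}T^cX$ for a unique $m\in\Z$.

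The crux is to show $m=1$. Applying the homomorphism $W$, with $W(S^4)=1$ and $W(T^c)=-c/12$, gives $m=W(\si)+c/12$. By Lemma~\ref{lem_windandW}, $m$ is equivalently the topological winding number in $(\R^2)^*$ of the loop $\mathrm{pr}\circ\rho\bigl(\si\cdot X^{-1}T^{-c}X\bigr)$. Tracking this loop by the same argument used to prove Lemma~\ref{lem_SThomotopic}, but extended from its $(d-1)$-step version to all $d$ segments, identifies it (up to homotopy) with the counter-clockwise arc through $v_0,v_1,\ldots,v_{d-1},v_d=T^cv_0$, closed up by a short translation from $v_d$ back to $v_0$ that introduces no additional winding. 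The counter-clockwise ordering required by Definition~\ref{def_sthelix}(2), together with the fact that $v_d$ lies in the same angular sector as $v_0$, forces this closed loop to wind around the origin exactly once, so $m=1$. This simultaneously establishes the equation and the identification $X=_G A_0$.

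The reverse direction runs the same machinery backward. Given integers $(a_0,\ldots,a_{d-1})$ and $X\in G$ satisfying $\si=_G S^4X^{-1}T^cX$, let $A_0\in\sltz$ be the projection of $X$, take $v_0,v_1$ to be its columns, and define $\{v_i\}_{i\in\Z}$ by the recurrence $v_{i+2}=a_iv_{i+1}-v_i$. Then $A_i:=[v_i,v_{i+1}]$ satisfies $A_{i+1}=A_i\cdot ST^{a_i}$, whence $\det(v_i,v_{i+1})=1$; projecting the hypothesis to $\sltz$ forces $A_d=T^cA_0$, and hence the periodicity $v_{i+d}=T^cv_i$. The only axiom of Definition~\ref{def_sthelix} remaining to verify is the counter-clockwise ordering of $v_0,\ldots,v_{d-1}$, and this is precisely the main obstacle: the hypothesis that the prefactor is exactly $S^4$ (rather than $S^{4m}$ for some other $m$) encodes algebraically, via the correspondence established in the forward direction, that the associated loop winds the origin exactly once. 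This rules out the alternative configurations in which the vectors wrap a different number of times and forces the required counter-clockwise arrangement, completing the equivalence.
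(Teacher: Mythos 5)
Your proposal follows essentially the same route as the paper: the recurrence $A_{i+1}=A_i\,ST^{a_i}$ together with $A_d=T^cA_0$ yields the identity in $\sltz$, the lift to $G$ is determined up to a central factor $S^{4m}$, and $m=1$ is pinned down by identifying $W$ with the winding number of the associated loop, which the counter-clockwise arrangement of $v_0,\ldots,v_{d-1}$ forces to be exactly one (the paper likewise defers the converse's counter-clockwise verification to the same winding-number observation, in the proof of Proposition~\ref{prop_correspondence}). The one place the paper does visibly more work is in reducing the loop $\rho(\si X^{-1}T^{-c}X)$ to the two-segment form traced by the vectors and the $T^{-c}$ piece --- the $\rho(X^{\pm1})$ segments must be cancelled via the homotopy-commutativity of concatenation in a topological group (Lemma~\ref{lem_homotopy} and an explicit homotopy), a step your ``closed up by a short translation'' phrasing elides but which is routine.
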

\begin{proof}
 Let $A_i = [v_i, v_{i+1}]$. By Lemma~\ref{lem_listofint}
 and the fact that
 \[
  \matr{0}{-1}{1}{a_i} = ST^{a_i}
 \]
 we find that $A_{i+1} =_{\sltz} A_i ST^{a_i}$ for all
 $i\in\Z$. We conclude that
 \begin{align*}
  A_d =_{\sltz} A_0 ST^{a_0}\ldots ST^{a_{d-1}}
 \end{align*}
 and since $T^c A_0 =_{\sltz} A_d$ this implies that
 \[
  ST^{a_0}\ldots ST^{a_{d-1}} =_{\sltz} A_0^{-1}T^c A_0.
 \]
 Since $S^4$ generates the kernel of the projection $G\to\sltz$ we have that
 \[
  ST^{a_0}\ldots ST^{a_{d-1}} =_G S^{4k} A_0^{-1}T^c A_0
 \]
 for some $k\in\Z$. This is because 
$S^2$ is in the center of $\sltz$ and 
 when reducing an element of $\sltz$ we can assume that
 the relation $S^4 =_{\sltz} I$ is not used until the last step.
 Rearranging we have
 \begin{equation}\label{eqn_lemsthelixeqn}
  A_0 ST^{a_0}\ldots ST^{a_{d-1}} A_0^{-1}T^{-c} =_G S^{4k}.
 \end{equation}
 To complete the proof we must only show that $k=1$ in Equation~\eqref{eqn_lemsthelixeqn}.

 Let $\si,\eta\in G$ be given by
 \[
  \si =_G ST^{a_0}\ldots ST^{a_{d-1}}\,\textrm{ and }\, \eta =_G A_0 \si A_0^{-1} T^{-c}
 \]
 so Equation~\eqref{eqn_lemsthelixeqn} becomes $\eta=_G S^{4k}$.
 Since $W(S^{4k})=k,$ it is sufficient to show that
 $W(\eta) = 1$.  Recall $\pi_1(\sltr)$ is abelean
 so the class of a loop is well-defined without
 fixing the basepoint. By Lemma~\ref{lem_windandW}, $\eta =_{\sltz} I$
 implies that $W(\eta) = \mathrm{wind}(\rho(\eta))$.
 By Lemma~\ref{lem_homotopy} $\rho(\eta)$ is homotopic to
 \[
  \big(\rho'(\eta)\big)(t) = \left\{\begin{array}{ll}
                             \rho(A_0^{-1})(4t)                                     ,& \, 0\leq t \leq \nicefrac{1}{4}\\[2ex]
                             \bigg(\big(\rho(\si)\big)(4t-1)\bigg) A_0^{-1}           ,& \, \nicefrac{1}{4} \leq t \leq \nicefrac{1}{2}\\[2ex]
                             \si A_0^{-1} \bigg(\big(\rho(T^{-c})\big)(4t-2)\bigg)    ,& \, \nicefrac{1}{2} \leq t \leq \nicefrac{3}{4}\\[2ex]
                             \bigg(\big(\rho(A_0)\big)(4t-3)\bigg)\si A_0^{-1} T^{-c} ,& \, \nicefrac{3}{4} \leq t \leq 1.
                         \end{array}
                  \right.
 \]
 Let $\ga_0\colon[0,1]\to\sltr$ be the path from
 $A_0^{-1}$ to itself given by
 \begin{equation}
  \label{eqn_ga0}
  \ga_0(t) = \left\{\begin{array}{ll}
                             \bigg(\big(\rho(\si)\big)(2t)\bigg) A_0^{-1}             ,& \, 0 \leq t \leq \nicefrac{1}{2}\\[2ex]
                             \si A_0^{-1} \bigg(\big(\rho(T^{-c})\big)(2t-1)\bigg)    ,& \, \nicefrac{1}{2} \leq t \leq 1.\\
                         \end{array}
                  \right.
 \end{equation}
 The paths $\ga_0$ and $\rho'(\eta)$ are homotopic via the homotopy
 \[
  \big(\rho'_s(\eta)\big)(t) = \left\{\begin{array}{ll}
                             \rho(A_0^{-1})(\frac{4t}{s})                              ,& \, 0\leq t \leq \nicefrac{s}{4}\\[2ex]
                             \Bigg(\Big(\rho(\si)\Big)\big(\frac{4t-s}{2-s}\big)\Bigg) A_0^{-1}            ,& \, \nicefrac{2}{4} \leq t \leq \nicefrac{1}{2}\\[2ex]
                             \si A_0^{-1} \Bigg(\Big(\rho(T^{-c})\Big)\big(\frac{4t-2}{2-s}\big)\Bigg)     ,& \, \nicefrac{1}{2} \leq t \leq \frac{4-s}{4}\\[2ex]
                             \Bigg(\Big(\rho(A_0)\Big)\big(\frac{4t+s-4}{s}\big)\Bigg)\si A_0^{-1} T^{-c}  ,& \, \frac{4-s}{4} \leq t \leq 1
                         \end{array}
                  \right.
 \]
 for $0<s\leq 1$ where $\ga_0$ is defined as above.
 Thus, to complete the proof we only must show
 $\mathrm{wind}(\ga_0)=1$ where $\ga_0$ is as in
 Equation~\eqref{eqn_ga0}.
 By Lemma~\ref{lem_SThomotopic}, the path
 \[
 \mathrm{pr}\bigg(\rho(\si)(2 (\cdot))\bigg)\colon [0, \nicefrac{1}{2}]\to(\R^2)^*
 \]
 is homotopic to a path which travels counter-clockwise
 at most one full rotation.
 The path
 \[
  \mathrm{pr}\bigg(\si A_0^{-1}\,\rho(T^{-c})(2(\cdot)-1)\bigg)\colon[\nicefrac{1}{2},1]\to(\R^2)^*,
 \]
 travels  only counter-clockwise and cannot cross the
 line $\{y=0\}$, so it completes at most one half-rotation.
 Since $\si A_0^{-1} T^{-c} =_{\sltz} A^{-1}_0$, the path
 $\ga_0$ thus circles the origin an integer number of times, so we
 conclude that $\mathrm{wind}(\mathrm{pr}(\ga_0))= \mathrm{wind}(\ga_0)=1$.
 This completes the proof.
\end{proof}

\begin{figure}
 \centering
 \includegraphics[width = 230pt]{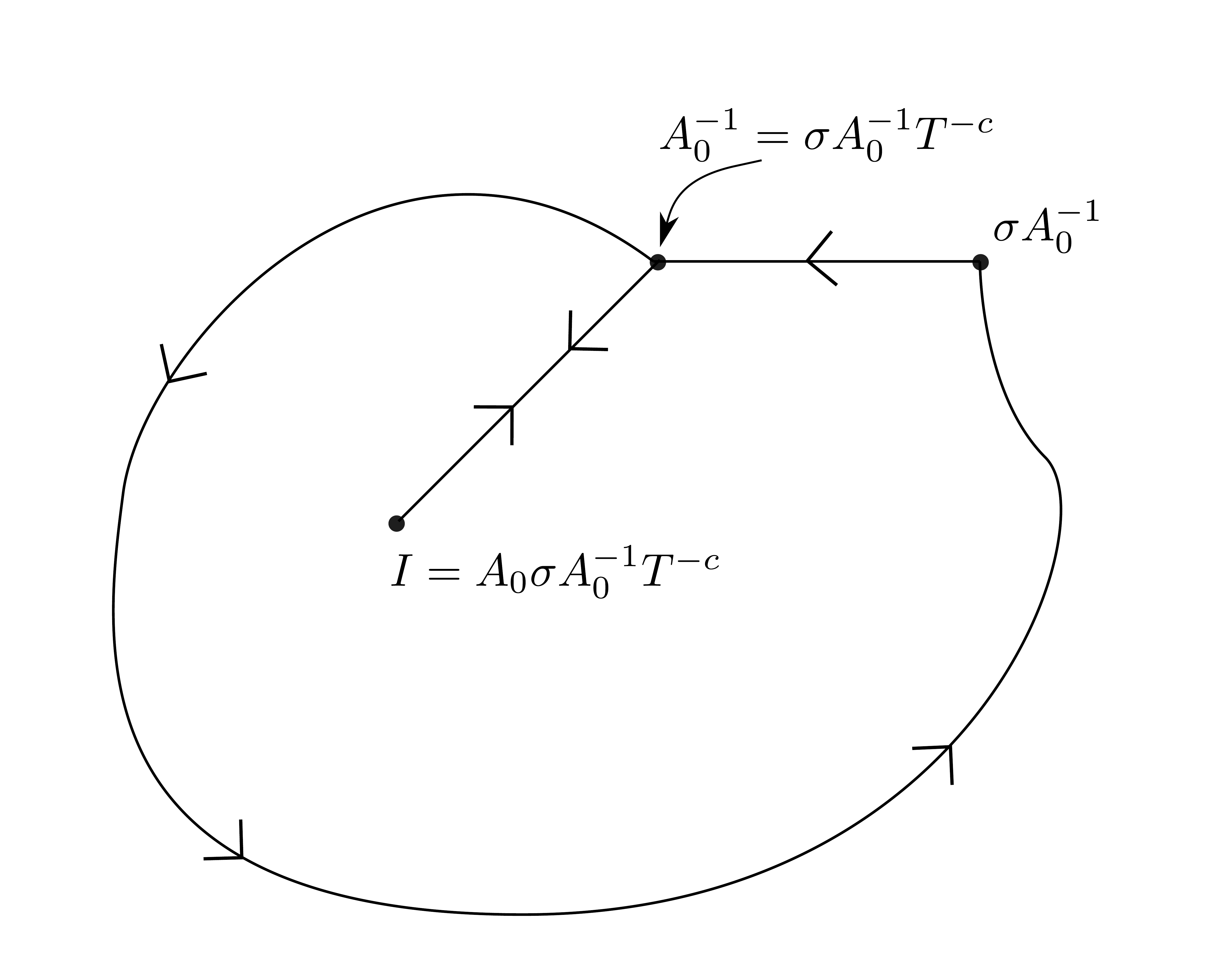}
 \caption{The loop $\rho(\eta)$ from the proof of
  Lemma~\ref{lem_sthelixeqn}, which has winding number 1.}
 \label{fig_sthelixeqnfigure}
\end{figure}

\begin{proof}[Proof of Proposition~\ref{prop_correspondence}]
 Let $\mathcal{H} = (d,c,[\{v_i\}_{i\in\Z}])$ be a semitoric helix.
 Then there exists associated
 integers $(a_0, \ldots, a_{d-1})\in\Z^d$ as in Definition~\ref{def_associatedints}
 by Lemma~\ref{lem_listofint}.
 If $\{w_i\}_{i\in\Z}\in[\{v_i\}_{i\in\Z}]$ then by Definition~\ref{def_sthelix}
 there exists some $k,\ell\in\Z$ such that $v_i = T^k w_{i+\ell}$
 for all $i\in\Z$.  In this case $a_i v_{i+1} = v_i + v_{i+2}$ implies that
 $
  a_i w_{i+1+\ell} = w_{i+\ell} + w_{i+2+\ell}
 $
 and denoting $a_j := a_{j\,\mathrm{mod}d}$ this implies that
 $
  a_{i-\ell} w_{i+1} = w_{i} + w_{i+2}.
 $
 Thus, the associated integers for $\{w_i\}_{i\in\Z}$ are given by
 $
  (a_{-\ell}, a_{1-\ell}, \ldots, a_{d-1-\ell})
 $
 which agrees with those integers for $\{v_i\}_{i\in\Z}$ up to
 cyclic permutation, as desired.

 Suppose $(a_0, \ldots, a_{d-1})\in\Z^d$ is a list of integers satisfying
 \[
  ST^{a_0}\ldots ST^{a_{d-1}} =_G S^4 X^{-1} T^{-c} X
 \]
 for some $c\in\Z_{>0}$.  Let $A_0\in\sltz$ be any matrix
 satisfying $X =_{\sltz} A_0$ and define $v_0, v_1\in\Z^2$
 so that $A_0 = [v_0, v_1]$.  Then define $v_2, \ldots, v_{d-1}$
 by
 $
  v_{i} = a_{i-2} v_{i-1} - v_{i-2}
 $
 for $i=2, \ldots, d-1$.  Use the relationship
 $v_{i+d} = T^c v_i$ to extend $v_0, \ldots, v_{d-1}$
 to $\{v_i\}_{i\in\Z}$.  Since $W(ST^{a_0}\ldots ST^{a_{d-1}})=1$, the vectors
 $v_0, \ldots, v_{d-1}$ are in counter-clockwise order
 and by construction $\det(v_i, v_{i+1})=1$ for all
 $i\in\Z$, so $[\{v_i\}_{i\in\Z}]$ is a semitoric helix
 with the prescribed associated integers.

 If $\mathcal{H}$ and $\mathcal{H}'$ satisfy $\mathcal{H} = \pm\mathcal{H}'$
 then they have the same associated integers since those integers are defined by a linear
 equation, Equation~\eqref{eqn_virecurrence}, which is invariant under the action of $-I$.

 Conversely, suppose that $\mathcal{H}=(d,c,[\{v_i\}_{i\in\Z}])$
 and $\mathcal{H}'=(d,c,[\{v_i'\}_{i\in\Z}])$ are semitoric
 helices of the same length, complexity, and associated integers.
 Let $A_0 = [v_0,v_1]$ and $A_0' = [v_0',v_1']$ and let
 $a_0, \ldots, a_{d-1}$ be the common associated integers.  Then
 \[
  ST^{a_0}\ldots ST^{a_{d-1}} =_{\sltz} A_0^{-1}T^c A_0
 \,\textrm{ and }\,
  ST^{a_0}\ldots ST^{a_{d-1}} =_{\sltz} (A_0')^{-1}T^c (A_0')
 \]
so $ A_0^{-1}T^c A_0 =_{\sltz} (A_0')^{-1}T^c (A_0')$.
Thus $A_0' A_0^{-1}$ commutes with $T^c$, so
$A_0' A_0^{-1} =_{\sltz} \pm T^k$ for some $k\in\Z$, and
so we may assume $A_0 =_{\sltz}\pm A_0'$ because $T^k$ is already included in
the equivalence relation on helices.
Finally, $v_0 = \pm v_0'$ and $v_1 = \pm v_1'$ implies
$v_i = \pm v_i'$ by Equation~\eqref{eqn_virecurrence}.
\end{proof}

\section{Standard form in \texorpdfstring{$\mathrm{PSL}_2(\Z)$}{the projective special linear group} and the winding number}
\label{sec_pltz}

First we prove several lemmas which will be needed
in the proof of Theorem~\ref{thm_standardpsltz}.

\begin{lemma}\label{lem_nonnegwinding}
 If $\sigma\in\STgroup$ is $S$\--positive
 and $\sigma=_{\psltz} I$ then $W(\sigma)\geq 0$
 where $W(\sigma)=0$ if and only if $\sigma$ is the empty word.
\end{lemma}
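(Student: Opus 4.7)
My plan is to read $W(\sigma)$ off of the geometry of the path $\rho(\sigma)$ in $\sltr$ by tracking its projection $\mathrm{pr}\circ\rho(\sigma)$ in $(\R^2)^*$, exploiting the fact that $S$--positivity forces this projected path to move only counterclockwise.

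First I would analyze how each generator affects the projected path. If the path has reached some $M\in\sltr$ and we then apply $T^{\pm 1}$, the piece $t\mapsto\mathrm{pr}(M\rho(T^{\pm 1})(t))=Me_1$ is constant in $t$, so the angular coordinate does not change. If instead we apply $S$, the piece is $t\mapsto\cos(\pi t/2)Me_1+\sin(\pi t/2)Me_2$, an arc from $Me_1$ to $Me_2$. Since $(Me_1,Me_2)$ is a positively oriented basis (as $\det M=1$), this arc moves strictly counterclockwise through some angle in $(0,\pi)$. Consequently, for any $S$--positive $\sigma$ the path $\mathrm{pr}(\rho(\sigma))$ admits a monotonically non-decreasing continuous angle lift, and its total angular change $\Theta(\sigma)\in[0,\infty)$ is non-negative, equaling zero if and only if $\sigma$ contains no $S$.

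Next, since $\sigma=_{\psltz}I$, one of two cases occurs. If $\sigma=_{\sltz}I$, then $\sigma\in\ker(G\to\sltz)=\langle S^4\rangle$, so $\sigma=_G S^{4k}$ for a unique $k\in\Z$ and $W(\sigma)=k$; here $\mathrm{pr}(\rho(\sigma))$ is a closed loop at $e_1$ and Lemma~\ref{lem_windandW} identifies $k=\Theta(\sigma)/(2\pi)\geq 0$. If $\sigma=_{\sltz}-I$, then $\sigma\cdot S^{-2}\in\ker$, so $\sigma=_G S^{4k+2}$ for a unique $k\in\Z$ and $W(\sigma)=k+\tfrac{1}{2}$. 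Using that $\sigma=_G S^{4k+2}$ in $G\hookrightarrow\widetilde{\sltr}$ means these two paths are homotopic rel endpoints in $\sltr$, and that angular change in $(\R^2)^*$ is a homotopy invariant rel endpoints, I would identify $\Theta(\sigma)$ with the angular change of $\mathrm{pr}(\rho(S^{4k+2}))$, which a direct calculation shows is exactly $(4k+2)\cdot\pi/2=(2k+1)\pi$. Thus $(2k+1)\pi=\Theta(\sigma)\geq 0$ forces $k\geq 0$, and so $W(\sigma)\geq\tfrac{1}{2}>0$ in this case.

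Finally, equality $W(\sigma)=0$ is possible only in the first case with $k=0$, so $\Theta(\sigma)=0$, which forces $\sigma$ to contain no occurrence of $S$. Since $S$--positive words admit no $SS^{-1}$ cancellations during free reduction, the reduced form of $\sigma$ in $\STgroup$ is then $T^m$ for some $m\in\Z$; then $T^m=_{\sltz}I$ forces $m=0$, so $\sigma$ is the empty word. The main subtlety I would be careful about is the treatment of $\Theta(\sigma)$ as a genuine real number (not just modulo $2\pi$), which I would handle by taking a continuous lift of the angle function along the path and noting that this lift is determined by the homotopy class rel endpoints, hence by the element of $G$ representing $\sigma$.
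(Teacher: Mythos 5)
Your proof is correct and rests on the same key observation as the paper's: an $S$\--positive word produces a path in $(\R^2)^*$ that moves only counterclockwise (each $S$ contributing a strictly positive angular increment, each $T^{\pm1}$ contributing none), so the total angular change---and hence $W(\sigma)$---cannot be negative, with equality only when no $S$ occurs. The only notable difference is that you split explicitly into the cases $\sigma =_{\sltz} I$ (so $\sigma =_G S^{4k}$) and $\sigma =_{\sltz} -I$ (so $\sigma =_G S^{4k+2}$), whereas the paper's proof phrases the same idea as the positivity of the winding number of a closed counterclockwise path through the associated vectors $v_0,\dots,v_{d-1}$; your version is, if anything, slightly more careful about the passage from $\psltz$ back to $\sltz$.
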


\begin{proof}
If $\sigma$ is the empty word then $W(\sigma) = 0$ and the claim holds.
Assume $\sigma$ is not the empty word.
Since $\sigma$ is $S$\--positive up to conjugation by $T$, which does not
change $W(\si)$, we may write it as
$
 \sigma =_{\STgroup}ST^{a_0}\ldots ST^{a_d-1}
$
for some $a_0, \ldots, a_{d-1}\in\Z$. We define a sequence of vectors
$
 v_0, \ldots, v_{d-1}\in\Z^2
$
by choosing any $v_0,v_1\in\Z^2$ with $\det(v_0,v_1)=1$
and defining $v_2, \ldots, v_{d-1}$ by
$
 v_{i+2} = -v_i + a_i v_{i+1}
$
for $i=0, \ldots, d-3$.
Let $\ga\colon[0,1]\to(\R^2)^*$ be a path which connects $v_0, \ldots, v_{d-1}$
in order and travels only counter-clockwise.  Then, $W(\si) = \mathrm{wind}(\ga)$
and $\mathrm{wind}(\ga)>0$ because $\ga$ must travel at least once around the origin
to move only counter-clockwise and return to $\ga(0)$.
\end{proof}

\begin{lemma}\label{lem_minimalwinding}
 If $X\in\psltz$ then there exists some $q\in\frac{1}{12}\Z$ such that
 $w(\sigma)\geq q$ for all $\sigma\in\STgroup$
 which are $S$\--positive and satisfy
 $\sigma =_{\psltz} X$.
\end{lemma}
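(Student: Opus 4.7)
The plan is to reduce the desired bound to Lemma~\ref{lem_nonnegwinding} by comparing every $S$-positive representative of $X$ against a single fixed reference word. First I would pick any $S$-positive $\sigma_0\in\STgroup$ with $\sigma_0 =_{\psltz} X$; such a $\sigma_0$ exists because any word in $S^{\pm1}, T^{\pm1}$ representing $X$ may be converted to an $S$-positive one by replacing each $S^{-1}$ with $S$, using the relation $S^2 =_{\psltz} I$. The bound will be $q := -W(\tilde\sigma_0^{-1})$, where $\tilde\sigma_0^{-1}$ is an $S$-positive partner for $\sigma_0$ constructed below.

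Given an arbitrary $S$-positive $\sigma\in\STgroup$ with $\sigma =_{\psltz} X$, I would form $\sigma_0^{-1}\in\STgroup$ by reversing the letters of $\sigma_0$ and inverting each one; the resulting word uses only $S^{-1}, T, T^{-1}$. Then define $\tilde\sigma_0^{-1}$ by replacing every $S^{-1}$ in $\sigma_0^{-1}$ with $S$. By construction $\tilde\sigma_0^{-1}$ is $S$-positive, and $\tilde\sigma_0^{-1} =_{\psltz} \sigma_0^{-1}$ because the substitution uses only $S =_{\psltz} S^{-1}$. Concatenating, $\sigma\tilde\sigma_0^{-1}$ is $S$-positive and satisfies
\[
 \sigma\tilde\sigma_0^{-1} =_{\psltz} \sigma\sigma_0^{-1} =_{\psltz} XX^{-1} = I.
\]
Lemma~\ref{lem_nonnegwinding} then gives $W(\sigma\tilde\sigma_0^{-1}) \geq 0$, and since $W$ is a homomorphism on $\STgroup$ this rearranges to $W(\sigma) \geq -W(\tilde\sigma_0^{-1}) = q$. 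The right-hand side is a fixed element of $\tfrac{1}{12}\Z$ depending only on the reference word $\sigma_0$, yielding the required uniform bound.

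The only real subtlety is the substitution step: it must be carried out in $\STgroup$ as a formal manipulation of words, yet the resulting word must still represent the same element in $\psltz$. This is possible precisely because we are working modulo $\psltz$ (rather than $\sltz$, where $S^{-1}\neq S$), and it is exactly what lets us convert the formal inverse $\sigma_0^{-1}$ into an $S$-positive word with a fixed, finite winding number. Once that observation is in hand, the argument is a one-line application of Lemma~\ref{lem_nonnegwinding}.
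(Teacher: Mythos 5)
Your proposal is correct and follows essentially the same route as the paper: fix an $S$-positive word representing $X^{-1}$, set $q$ to be the negative of its winding number, and apply Lemma~\ref{lem_nonnegwinding} to the concatenation together with the fact that $W$ is a homomorphism. The only difference is that you spell out explicitly how to produce the $S$-positive representative of $X^{-1}$ (formal inversion followed by replacing $S^{-1}$ with $S$), which the paper leaves implicit in the remark that $S =_{\psltz} S^{-1}$.
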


\begin{proof}
 Since $S =_{\psltz} S^{-1}$ every element of $\psltz$ has an 
 $S$\--positive representation.
 Fix some $S$\--positive $\eta\in\STgroup$ such that 
 $\eta=_{\psltz}X^{-1}$
 and let $q=-W(\eta)$.
 Let $\sigma$ be any $S$\--positive element of $\STgroup$ such that
 $\sigma = _{\psltz} X$.  Now $\sigma \eta =_{\sltz} I$, so $W(\sigma \eta)\geq 0$
 by Lemma~\ref{lem_nonnegwinding}.  This means
 $W(\sigma)+W(\eta)\geq 0 $ so $W(\sigma) \geq q$ and the result follows because
 $q$ does not depend on the choice of $\sigma$.
\end{proof}

The following is a special case of~\cite[Lemma 3.8]{KaPaPe2016}, but for the sake
of being self-contained we include the proof here.
\begin{lemma}\label{lem_ai}
 Suppose $d>0$ and $b,a_0, \ldots, a_{d-1}\in\Z$ are such that
 \begin{equation}\label{eqn_ai}
  T^b ST^{a_0}\ldots ST^{a_{d-1}} =_{\psltz} I.
 \end{equation}
 Then $d>1$. If $d=2$ then $a_0 = 0$ and $a_1 = -b$.
 If $d>2$ then $a_i \in \{0, \pm 1\}$ for some $0\leq i<d-2$.
\end{lemma}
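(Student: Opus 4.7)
The plan is to compute the matrix product $T^b ST^{a_0} \cdots ST^{a_{d-1}}$ explicitly in $\sltz$, noting that the hypothesis lifts to the equation $T^b ST^{a_0} \cdots ST^{a_{d-1}} = \pm I$ in $\sltz$. Set $M_k := ST^{a_0}\cdots ST^{a_k}$. Using $ST^{a_k} = \matr{0}{-1}{1}{a_k}$, the relation $M_k = M_{k-1}\,ST^{a_k}$ shows that the bottom-left entry of $M_k$ equals $v'_{k-1}$, where the sequence $(v'_k)$ satisfies the three-term recurrence
\[
 v'_k = a_k v'_{k-1} - v'_{k-2}, \qquad v'_{-2} = 0,\ v'_{-1} = 1;
\]
in particular $v'_0 = a_0$ and $v'_1 = a_0 a_1 - 1$. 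Since multiplication by $T^b$ preserves the bottom row of a matrix, the equation $T^b M_{d-1} = \pm I$ forces the bottom-left entry of $M_{d-1}$, namely $v'_{d-2}$, to vanish.

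For $d = 1$, the equation becomes $S = \pm T^{-b-a_0}$, which is impossible because $S$ has trace $0$ while every power of $T$ has trace $\pm 2$; hence $d > 1$. For $d = 2$, vanishing of the bottom-left entry gives $v'_0 = a_0 = 0$, and then the matrix equation reduces to $T^{b+a_1} = \pm I$, forcing $a_1 = -b$.

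The case $d > 2$ is the main one, and I argue by contradiction: suppose $|a_i| \geq 2$ for every $0 \leq i \leq d-3$. The key step is the inductive claim that under this hypothesis, $|v'_k| > |v'_{k-1}| \geq 1$ for all $0 \leq k \leq d-3$. The base case $k = 0$ is immediate since $|v'_0| = |a_0| \geq 2 > 1 = |v'_{-1}|$. For the inductive step,
\[
 |v'_k| \geq |a_k|\,|v'_{k-1}| - |v'_{k-2}| \geq 2|v'_{k-1}| - |v'_{k-2}| > |v'_{k-1}|,
\]
where the last inequality uses the inductive hypothesis $|v'_{k-1}| > |v'_{k-2}|$.

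Finally, the required identity $v'_{d-2} = a_{d-2} v'_{d-3} - v'_{d-4} = 0$ must be reconciled with the claim above. For $d = 3$ this reads $a_0 a_1 - 1 = 0$, forcing $|a_0| = 1$ and contradicting $|a_0| \geq 2$. For $d \geq 4$, vanishing requires $v'_{d-3} \mid v'_{d-4}$, which is impossible by the inductive claim since $|v'_{d-3}| > |v'_{d-4}| \geq 1$. The main obstacle is setting up the induction so that strict growth and nonvanishing of $v'_k$ propagate together; once that is in place, the divisibility contradiction at index $d-2$ is immediate.
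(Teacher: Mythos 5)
Your proof is correct, but it takes a genuinely different route from the paper. The paper works with the faithful action of $\psltz$ on $\R\cup\{\infty\}$ by M\"obius transformations ($T(x)=x+1$, $S(x)=-1/x$): after conjugating to move $T^b$ next to $ST^{a_{d-1}}$, it observes that $ST^{a_{d-2}}(\infty)=0$ and that $ST^k$ maps the punctured interval $\{x: 0<\abs{x}<1\}$ into itself whenever $k\notin\{0,\pm1\}$, so the accumulated value can never return to $\infty$, contradicting the identity. You instead track the bottom row of the partial products $M_k=ST^{a_0}\cdots ST^{a_k}$ via the three-term recurrence $v'_k=a_kv'_{k-1}-v'_{k-2}$ and prove strict growth $\abs{v'_k}>\abs{v'_{k-1}}\geq 1$ under the assumption $\abs{a_i}\geq 2$, which is incompatible with the vanishing of the bottom-left entry forced by $T^bM_{d-1}=\pm I$. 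The two arguments are of course cousins --- the paper's M\"obius orbit of $\infty$ is essentially the ratio of the entries whose growth you control, and both are continued-fraction-type estimates --- but yours is purely matrix-theoretic and self-contained, avoids the conjugation step and the passage to the boundary action, and handles the $d=1,2$ cases by direct trace/entry computations rather than by evaluating at $\infty$; the paper's version is slightly slicker in that a single invariant-interval observation replaces your induction. Your handling of the lift from $\psltz$ to $\sltz$ (the $\pm I$ ambiguity), the base cases, and the final divisibility contradiction for $d\geq 4$ versus the separate $d=3$ computation are all correct.
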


\begin{proof}
 The group $\psltz$ acts faithfully on the extended real
 line $\R\cup\{\infty\}$ by
 $
  T(x) = x+1 ,\,\, S(x) = \frac{-1}{x}
 $
 for $x\in\R\setminus\{0\}$, $T(0)=0$, $S(0)=\infty$, $T(\infty)=\infty$,
 and $S(\infty)=0$.

 If $d=1$, then Equation~\eqref{eqn_ai} states that
 $
  T^b ST^{a_0} =_{\psltz} I
 $
 which is impossible because
 $
  T^{b}ST^{a_0}(\infty) = b \neq \infty.
 $

 If $d=2$, then, after conjugating by $ST^{a_1}$, Equation~\eqref{eqn_ai}
 states that
 $
  ST^{a_1+b}ST^{a_0} =_{\psltz} I
 $
 and evaluation of both sides at infinity gives
 $
  S(a_1+b) = \infty
 $
 which implies $a_1 = -b$. Thus, $S^2T^{a_0} =_{\psltz} I$ so $a_0 = 0$.

 Finally, suppose $d>2$ and $a_i\notin \{0,\pm 1\}$ for all $i = 0, \ldots, d-2$.
 Conjugate Equation~\eqref{eqn_ai} by $ST^{a_{d-1}}$ to produce
 \begin{equation}\label{eqn_psltzeqn}
  ST^{a_{d-1}+b}ST^{a_0}\ldots ST^{a_{d-2}} =_{\psltz} I.
 \end{equation}
 Let $y = ST^{a_0}\ldots ST^{a_{d-2}}(\infty)$ so Equation~\eqref{eqn_psltzeqn}
 implies $ST^{a_{d-1}+b}(y)=\infty$. On the other hand,
 $ST^{a_{d-2}}(\infty)=0$ and
 if $x\in\R$ with $\abs{x}<1$ then $0<\abs{ST^k(x)}<1$ for any integer $k\notin\{\pm1, 0\}$, so
 \[
  \abs{y} = \abs{ST^{a_0}\ldots ST^{a_{d-3}}(0)}\in(0,1),
 \]
 and thus
 \[
  ST^{a_{d-1}+b}(y) = \frac{1}{y +a_{d-1}+b} \neq \infty,
 \]
 forming a contradiction.
\end{proof}

\begin{lemma}\label{lem_STinvSequalsTST}
 $ST^{-n}S =_{\psltz} (TST)^{n}$ for $n\geq0$.
\end{lemma}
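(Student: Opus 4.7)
The plan is to proceed by induction on $n\geq 0$. The base case $n=0$ is immediate: $ST^{0}S =_{\psltz} S^2 =_{\psltz} I$, and $(TST)^0$ is the empty word, which also equals $I$.

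For the inductive step, suppose $ST^{-n}S =_{\psltz} (TST)^n$. I would then compute
\[
 (TST)^{n+1} = (TST)^n \cdot TST =_{\psltz} ST^{-n}S \cdot TST = ST^{-n}\cdot (STS)\cdot T,
\]
and apply the defining relation $STS =_{\psltz} T^{-1}ST^{-1}$ to the middle factor, which yields
\[
 ST^{-n}\cdot T^{-1}ST^{-1}\cdot T =_{\psltz} ST^{-n-1}S,
\]
completing the induction. The argument uses only the single relation $STS =_{\psltz} T^{-1}ST^{-1}$ (together with $S^2 =_{\psltz} I$ for the base case), so there is no real obstacle; the content of the lemma is simply that the braid-type relation can be iterated to telescope the powers.
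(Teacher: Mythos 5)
Your proof is correct and rests on the same two ingredients as the paper's: the relation $S^2 =_{\psltz} I$ and the braid relation $STS =_{\psltz} T^{-1}ST^{-1}$. The paper organizes the computation by first establishing $ST^{-1}S =_{\psltz} TST$ and then telescoping $ST^{-n}S = (ST^{-1}S)^n$ using $S^2 = I$, whereas you run an induction on $n$; these are essentially the same argument.
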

\begin{proof}
 First $STS=_{\psltz}T^{-1}ST^{-1}$ implies $S=_{\psltz}TSTST$ so
 $ST^{-1}S =_{\psltz} TST$
 since $S=_{\psltz}S^{-1}$.  Now,
 \[
  ST^{-n}S =_{\psltz} (ST^{-1}S)^n =_{\psltz} (TST)^n
 \]
 for $n>0$, and if $n=0$ the claim reduces
 to $S^2 =_{\psltz} I$.
\end{proof}

\subsection{Standard form for elements of \tex{$\psltz$}}

In this section we prove Theorem~\ref{thm_standardpsltz}.

\begin{proof}[Proof of Theorem~\ref{thm_standardpsltz}]
 Let $\sigma\in\STgroup$ any $S$\--positive word with
 $\sigma =_{\psltz} X$.
 There are three steps to the reduction algorithm we will use on $\sigma$,
 where~\ref{reduction2} holds by Lemma~\ref{lem_STinvSequalsTST}.
 The reductions are:
 \begin{enumerate}[noitemsep, label = {\bf Reduction \arabic*}, align = left]
  \item\label{reduction1} replace $S^2$ with $I$;
  \item\label{reduction2} replace $ST^{-n}S$ with $(TST)^n$, for some $n>0$;
  \item\label{reduction3} replace $STS$ with $T^{-1}ST^{-1}$;
 \end{enumerate}
 To reduce the word we iteratively apply~\ref{reduction1},
 \ref{reduction2}, and \ref{reduction3} until no more are
 possible.
 Each of these reductions preserves the value of
 $\sigma$ in $\psltz$ and recall that the winding number cannot decrease
 indefinitely by Lemma~\ref{lem_minimalwinding}.  \ref{reduction1}
 and~\ref{reduction2} reduce the winding number 
 while~\ref{reduction3}
 preserves the winding number but reduces the number of times $S$ appears in the word,
 which is bounded below by zero. Thus, this process must terminate and
 after the reduction the word will be of the required form.

 Now we will show uniqueness.  Suppose that
 $
  \si,\eta\in\STgroup
 $
 with
 $\si =_{\psltz} \eta$ and
$
  \si =_{\STgroup} T^b ST^{a_0}\ldots ST^{a_{d-1}}$,
  $\eta =_{\STgroup} T^{b'} ST^{a'_0}\ldots ST^{a'_{d'-1}}$,
 where $a_i, a'_j>1$ for $i=0, \ldots, d-2$ and $j=0, \ldots d'-2$.
 First assume $\mathrm{min}(d,d')\leq 1$, and in this case assume $d\geq d'$.

 If $d'=0$ then
 $
  T^{b-b'}ST^{a_0}\ldots ST^{a_{d-1}} =_{\psltz} I
 $
 which contradicts Lemma~\ref{lem_ai} unless $d=0$, in which case
 $T^{b-b'}=_{\psltz} I$ so $b=b'$.
 If $d' = 1$ then
 \[
  T^{b-b'}ST^{a_0}\ldots ST^{a_{d-1}-a'_0}ST^0 =_{\psltz} I
 \]
 so $a_{d-1}-a_0'\in\{0,\pm1\}$ by Lemma~\ref{lem_ai}.
 Consider the cases if $d>1$.  If $a_{d-1}-a_0' = 0$ then
 \[
  T^{b-b'}ST^{a_0}\ldots ST^{a_{d-2}}S^2 =_{\psltz} I
 \]
 which contradicts Lemma~\ref{lem_ai} after replacing $S^2$
 by $I$.  If $a_{d-1}-a_0' = -1$ then
 \[
  T^{b-b'}ST^{a_0}\ldots ST^{a_{d-2}}ST^{-1}S =_{\psltz} I
 \]
 which contradicts Lemma~\ref{lem_ai} after replacing
 $ST^{-1}S$ by $TST$.  Finally, if $a_{d-1}-a_0' = 1$, then
 \[
  T^{b-b'}ST^{a_0}\ldots ST^{a_{d-2}-1}ST^{-1} =_{\psltz} I
 \]
 which contradicts Lemma~\ref{lem_ai} unless $a_{d-2}=2$.
 This process is repeated to conclude that
 $a_0 = \ldots = a_{d-2}=2$ so
 $
  T^{b-b'}(ST^2)^{d-1}STS =_{\psltz} I
 $
 which implies
 $
  T^{b-b'-1}ST^{-d}=_{\psltz} I.
 $
 By Lemma~\ref{lem_ai} this cannot hold.  Thus, $d=1$, in which
 case
 $
  T^{b-b'}ST^{a_0-a_0'}S =_{\psltz} I,
 $
 so $b-b' = 0$ and $a_0 - a_0' = 0$ by Lemma~\ref{lem_ai}.

 Finally, assume $d,d'>1$ and assume that $a_{d-1}\neq a'_{d'-1}$, 
 otherwise cancel
 $ST^{a_{d-1}}$ from both sides.
 In this case we see that
 $\si\eta^{-1}=_{\psltz}I$
 implies
 \[
  T^b ST^{a_0}\ldots ST^{a_{d-1}-a_{d'-1}'}ST^{-a_{d'-2}'}\ldots ST^{-a_0'}S =_{\psltz} I
 \]
 and since some power of $T$ must be in $\{0,\pm 1\}$ by Lemma~\ref{lem_ai},
 but $a_i,a_j'>1$ for $i=0, \ldots, d-2$, $j= 0, \ldots, d'-2$ since $\si$
 and $\eta$ are in standard form, we conclude $a_{d-1}-a'_{d'-1}\in\{0,\pm 1\}$.
 We have assumed $a_{d-1}\neq a'_{d'-1}$
 so ${a_{d-1}-a'_{d'-1}}=\pm 1$, and furthermore we can assume
 ${a_{d-1}-a'_{d'-1}}= 1$, otherwise exchange $\si$ and $\eta$. Then
 choose maximal $k\in\Z_{\geq 0}$ such that
 $a_{d-2} = a_{d-3} = \ldots = a_{d-2-(k-1)} = 2$.
 where $k=0$ if $a_{d-2}\neq 2$.
 If $k<d-1$ then
 \begin{align*}
  \si\eta^{-1} &=_{\psltz} T^{b}ST^{a_0}\ldots ST^{a_{d-2-k}}(ST^2)^k(STS)T^{-a'_{d'-2}}\ldots ST^{-a'_0}ST^{-b'}\\
          &=_{\psltz} T^{b}ST^{a_0}\ldots ST^{a_{d-2-k}-1}(TST)^kST^{-a'_{d'-2}-1}\ldots ST^{-a'_0}ST^{-b'}\\
          &=_{\psltz} T^{b}ST^{a_0}\ldots ST^{a_{d-2-k}-1}ST^{-a'_{d'-2}-k-1}\ldots ST^{-a'_0}ST^{-b'}.
 \end{align*}
  Since $a_{d-2-k}-1>1$ and $-a'_{d'-2}-k-1<-1$ this expression cannot evaluate
  to the identity in $\psltz$ by Lemma~\ref{lem_ai}.

  Otherwise, $k = d-1$, in which case
 \begin{align*}
  \si\eta^{-1} &=_{\psltz} T^{b}(ST^2)^{d-1}(STS)T^{-a'_{d'-2}}\ldots ST^{-a'_0}ST^{-b'}\\
          &=_{\psltz} T^{b-1}(TST)^{d-1}ST^{-a'_{d'-2}-1}\ldots ST^{-a'_0}ST^{-b'}\\
          &=_{\psltz} T^{b-1}ST^{-a'_{d'-2}-d}\ldots ST^{-a'_0}ST^{-b'}.
 \end{align*}
 which again cannot evaluate to the identity in $\psltz$ by Lemma~\ref{lem_ai}.
 This completes the proof of uniqueness.

 Lastly, we will show the standard form has minimal winding number.
 Let $X\in\psltz$ and suppose $\eta\in\STgroup$ is $S$\--positive
 with $\eta =_{\psltz} X$.  Then $\eta$ can be reduced to
 the standard form of $X$, denoted $\overline{X}\in\STgroup$, by following the reduction algorithm
 at the beginning of the proof. Since each 
 of~\ref{reduction1}\--\ref{reduction3} in the algorithm either preserves
 or reduces the winding number, $W(\overline{X})\leq W(\eta)$.
\end{proof}

\subsection{Standard forms and the winding number}

Recall that given any $X\in\psltz$ we denote by
$
 \overline{X}\in\STgroup
$
the standard form of $X$, as given in Theorem~\ref{thm_standardpsltz}.

\begin{lemma}\label{lem_wxpluswxbar}
 If $X\in\psltz\setminus \{T^k\}_{k\in\Z}$ then
 \[
  W\big(\overline{X}\big) + W\big(\overline{X^{-1}}\big) = \frac{1}{2}.
 \]
\end{lemma}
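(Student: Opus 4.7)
The plan is to compute $\overline{X^{-1}}$ explicitly from $\overline{X}$ by applying the reduction algorithm of Theorem~\ref{thm_standardpsltz} to a convenient $S$\--positive representative of $X^{-1}$, and to count precisely how many winding-reducing steps are used. Write $\overline{X} =_{\STgroup} T^b ST^{a_0}\ldots ST^{a_{d-1}}$ with $a_i>1$ for $i=0,\ldots,d-2$; since $X\ne T^k$, we have $d \geq 1$. Inverting this word in $\STgroup$ and then replacing each $S^{-1}$ by $S$ (legal in $\psltz$) yields the $S$\--positive word
\[
 \sigma_0 = T^{-a_{d-1}}ST^{-a_{d-2}}\ldots ST^{-a_0}ST^{-b},
\]
which satisfies $\sigma_0 =_{\psltz} X^{-1}$. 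A direct computation from the definitions $W(S)=1/4$, $W(T)=-1/12$ gives the identity
$W(\sigma_0) = d/2 - W(\overline{X})$.

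The core claim is that the reduction algorithm transforms $\sigma_0$ into its standard form $\overline{X^{-1}}$ using exactly $d-1$ applications of Reduction~2 and \emph{no} applications of Reductions~1 or~3. Granted the claim, since each application of Reduction~2 decreases $W$ by $1/2$ (a one-line check: $W(ST^{-n}S) = 1/2 + n/12$ while $W(T(ST^2)^{n-1}ST) = W((TST)^n) = n/12$), we obtain
\[
 W(\overline{X^{-1}}) = W(\sigma_0) - \tfrac{d-1}{2} = \tfrac{1}{2} - W(\overline{X}),
\]
which is the desired identity.

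To prove the claim, I would apply Reduction~2 successively to $ST^{-a_0}S$, then to the newly created pattern $ST^{1-a_1}S$, then to $ST^{1-a_2}S$, and so on. Prove by induction on $i$ that after $i$ reductions the word has the form
\[
 T^{-a_{d-1}}ST^{-a_{d-2}}\ldots ST^{-a_{i+1}}\cdot ST^{1-a_i}\cdot W_i,
\]
where $W_i$ starts with $S$, ends with $ST^{1-b}$, and has all interior $T$\--exponents (those appearing between two $S$\--letters) at least $2$; at $i=d-1$ the prefix collapses and the leading factors merge as $T^{1-a_{d-1}}W_{d-1}$. The inductive step applies Reduction~2 to the pattern $ST^{1-a_i}S$, which is legal because $a_i \geq 2$ forces $1-a_i \leq -1$; the flanking $T$\--factors produced by the reduction then absorb into the neighboring $T$\--powers, and a short case analysis confirms that $W_{i+1}$ still has all interior $T$\--exponents $\geq 2$. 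The principal bookkeeping hurdle is the corner case $a_i = 2$, where $(ST^2)^{a_i-2}$ is empty and adjacent segments collide, and verifying uniformly that the interior $T$\--exponents never drop to $0$ or $\pm 1$. This last point is what ensures no $S^2$ or $STS$ pattern ever arises, so Reductions~1 and~3 are genuinely never triggered; consequently the word $T^{1-a_{d-1}}W_{d-1}$ obtained after the $(d-1)$\--st reduction automatically satisfies the standard form conditions of Theorem~\ref{thm_standardpsltz}, and is therefore $\overline{X^{-1}}$.
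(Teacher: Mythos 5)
Your proposal is correct and takes essentially the same route as the paper's proof: invert $\overline{X}$, replace the $d$ letters $S^{-1}$ by $S$ (raising $W$ by $\nicefrac{d}{2}$), then reach standard form by exactly $d-1$ applications of $ST^{-n}S\to(TST)^n$, each lowering $W$ by $\nicefrac{1}{2}$, giving $W\big(\overline{X^{-1}}\big)=\nicefrac{d}{2}-W\big(\overline{X}\big)-\nicefrac{d-1}{2}$. The only difference is organizational: the paper performs the $d-1$ replacements in batches (even-indexed gaps first, then odd-indexed ones with shifted exponents) rather than your sequential left-to-right sweep with merging, and both verifications that Reductions 1 and 3 never fire rest on the same observation that all interior exponents stay outside $\{0,\pm1\}$.
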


\begin{proof}
 Write
 $
  \overline{X} = T^b ST^{a_0}\ldots ST^{a_{d-1}}
 $
 and since $X\neq T^k$ for any $k\in\Z$, $d>0$.
 Now, $W\big(\big(\overline{X}\big)^{-1}\big) = -W\big(\overline{X}\big)$ where
 \[
  \big(\overline{X}\big)^{-1} = S^{-1}T^{-a_{d-1}}\ldots S^{-1}T^{-a_0}S^{-1}T^{-b}.
 \]
 We will reduce $\big(\overline{X}\big)^{-1}$ to standard form 
 using the reduction steps in the proof of Theorem~\ref{thm_standardpsltz} 
 and keep track
 of the winding number.  Replacing each $S^{-1}$ by $S$ increases the winding number
 by $\nicefrac{d}{2}$.  Now replace each $ST^{-a_{i}}S$ with $(TST)^{a_i}$ for each even index $i$
 which at most increases the odd indexed powers of $T$ by $2$.
 Since each $a_i\geq 2$ for $i=0, \ldots, d-2$ we do the replacement
 $ST^{-a_i+2}S = (TST)^{a_i-2}$ for odd $0<i<d-3$ and the replacement
 $ST^{-a_i+1}S = (TST)^{a_i-1}$ for $i=1$ and the highest odd $i\leq d-2$.
 Thus we have now used $ST^{-n}S = (TST)^n$, for varying values of $n>0$,
 a total of $d-1$ times decreasing $W$ by $\nicefrac{1}{2}$ each time.  The word produced in this way
 is now in standard form so it is equal to $\overline{X^{-1}}$ and
 \[
  W\big(\overline{X^{-1}}\big) = -W\big(\overline{X}\big)+\frac{d}{2} -\frac{d-1}{2} = -W\big(\overline{X}\big)+\frac{1}{2}
 \]
 as desired.
\end{proof}

We can now prove that in many cases the first power of $T$ in $\overline{X}$ and
the last power of $T$ in $\overline{X^{-1}}$ must sum to $1$.

\begin{lemma}\label{lem_wraparoundinverses}
 For $X\in\psltz$ write
 \[
  \overline{X} =_{\STgroup} T^bST^{a_0}\ldots ST^{a_{d-1}}
  \textrm{ and }
  \overline{X^{-1}} =_{\STgroup} T^{b'}ST^{a_0'}\ldots ST^{a_{d'-1}'}.
 \]
 Then
 \[a_{d-1}+b' = a_{d'-1}'+b=0\] if $X=_{\psltz}T^k S T^a$  or $X =_{\psltz} T^k$  for some $k,a\in\Z$,
 and
 \[
  a_{d-1}+b' = a_{d'-1}'+b=1
 \]
  otherwise.
\end{lemma}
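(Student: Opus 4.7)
My plan is to expand $\overline{X}\cdot\overline{X^{-1}} =_{\psltz} I$ as a single word in $\STgroup$ and then repeatedly invoke Lemma~\ref{lem_ai}. First I would observe that the length $d$ of $\overline{X}$ determines $X$ up to two degenerate cases: if $d=0$ then $\overline{X}=T^b$ forces $X=T^k$ (and symmetrically $d'=0$), in which case neither $a_{d-1}$ nor $a'_{d'-1}$ exists and the claim is vacuous; if $d=1$ then $\overline{X}=T^bST^{a_0}$ forces $X=T^kST^a$, and direct inversion gives $\overline{X^{-1}}=T^{-a}ST^{-k}$ with $d'=1$, so that $a_{d-1}+b' = a+(-a) = 0$ and $a'_{d'-1}+b = -k+k = 0$ as claimed. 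Otherwise $\overline{X}$ has $d \geq 2$, and running the same dichotomy on $X^{-1}$ forces $d' \geq 2$ as well; this is the main case below.

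In this case I would concatenate the two standard forms to obtain
\begin{equation*}
 T^b ST^{a_0}\cdots ST^{a_{d-2}}ST^{a_{d-1}+b'}ST^{a'_0}\cdots ST^{a'_{d'-2}}ST^{a'_{d'-1}} =_{\psltz} I,
\end{equation*}
a word of the form required by Lemma~\ref{lem_ai} with $d+d' \geq 4$ occurrences of $S$. The lemma forces some interior exponent (one of the first $d+d'-1$) to lie in $\{0,\pm 1\}$. Since $a_i, a'_j > 1$ for $i=0,\ldots,d-2$ and $j=0,\ldots,d'-2$ by definition of standard form, the only candidate is the merged middle exponent, so $a_{d-1}+b' \in \{0,\pm 1\}$; it remains to rule out the values $0$ and $-1$.

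If $a_{d-1}+b' = 0$, the middle $ST^0S$ equals $S^2 =_{\psltz} I$, collapsing the word to
\begin{equation*}
 T^b ST^{a_0}\cdots ST^{a_{d-2}+a'_0}ST^{a'_1}\cdots ST^{a'_{d'-1}} =_{\psltz} I
\end{equation*}
with $d+d'-2 \geq 2$ occurrences of $S$; since $a_{d-2}+a'_0>2$ and every other interior exponent remains $>1$, this contradicts Lemma~\ref{lem_ai}. If $a_{d-1}+b'=-1$, applying $ST^{-1}S=_{\psltz}TST$ (Lemma~\ref{lem_STinvSequalsTST}) at the middle produces
\begin{equation*}
 T^b ST^{a_0}\cdots ST^{a_{d-2}+1}ST^{a'_0+1}ST^{a'_1}\cdots ST^{a'_{d'-1}} =_{\psltz} I
\end{equation*}
with $d+d'-1 \geq 3$ occurrences of $S$, and again every interior exponent exceeds $1$, contradicting Lemma~\ref{lem_ai}. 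Thus $a_{d-1}+b'=1$, and applying the identical argument to $X^{-1}X=I$ yields $a'_{d'-1}+b=1$. The main obstacle is bookkeeping in the edge subcases $d=2$ or $d'=2$, where the lists ``$a_0,\ldots,a_{d-3}$'' or ``$a'_1,\ldots,a'_{d'-2}$'' degenerate to empty; there one must additionally invoke the $D=2$ conclusion of Lemma~\ref{lem_ai} (that the unique interior exponent must equal $0$) and check that $a_{d-2}+a'_0 \geq 4$ still rules this out.
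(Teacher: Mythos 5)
Your proposal is correct, and it diverges from the paper's proof at the decisive step. Both arguments begin identically: concatenate the two standard forms, note that all exponents other than the merged middle one exceed $1$, and invoke Lemma~\ref{lem_ai} to force $a_{d-1}+b'\in\{0,\pm 1\}$ (the paper works with $\overline{X^{-1}}\,\overline{X}$ and you with $\overline{X}\,\overline{X^{-1}}$, which is immaterial). To exclude the values $0$ and $-1$, however, the paper switches to the winding-number machinery: performing the reduction $S^2\mapsto I$ or $ST^{-1}S\mapsto TST$ drops $W$ by $\nicefrac{1}{2}$, and combining $W(\overline{X})+W(\overline{X^{-1}})=\nicefrac{1}{2}$ from Lemma~\ref{lem_wxpluswxbar} with the positivity statement of Lemma~\ref{lem_nonnegwinding} yields a contradiction. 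You instead carry out the same reduction explicitly on the word and apply Lemma~\ref{lem_ai} a second time to the collapsed word, checking that every interior exponent of the result (including the merged $a_{d-2}+a'_0\geq 4$, resp.\ $a_{d-2}+1$ and $a'_0+1$) still exceeds $1$; your treatment of the degenerate lengths $d\leq 1$ and of the $D=2$ boundary case of Lemma~\ref{lem_ai} is also correct. The trade-off: your route is more elementary and self-contained, needing only Lemmas~\ref{lem_ai} and~\ref{lem_STinvSequalsTST} and no winding-number input at all, at the cost of some index bookkeeping; the paper's route is shorter on the page because the $W$-machinery is already built (and Lemma~\ref{lem_wxpluswxbar} is needed elsewhere anyway), and it packages the ``each reduction costs $\nicefrac{1}{2}$'' principle that drives the later classification arguments. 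Both are complete proofs.
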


\begin{proof}
 The cases of $X=_{\psltz}T^kST^a$ and $X=_{\psltz}T^k$ are easily checked.
 Suppose $X$ is not of that form.
 Since $\overline{X^{-1}}\,\overline{X}=_{\sltz}I$
 by Lemma~\ref{lem_ai} some power of $T$ that is not at the front or end of the word
 must be $-1$, $1$, or $0$. Since $\overline{X}$ and $\overline{X^{-1}}$ are in
 standard form, $X\neq_{\psltz} T^k S T^a$, and $X\neq_{\psltz} T^k$ this means that $a_{d'-1}'+b\in\{\pm 1 , 0\}$.

 If $a_{d'-1}'+b=0$ then $S^2$ is a subword of $\overline{X^{-1}}\,\overline{X}$
 which can be replaced by $I$ and if $a_{d'-1}'+b = -1$ then $ST^{-1}S$
 is a subword of $\overline{X^{-1}}\,\overline{X}$ which can be replaced by
 $TST$. In either case this means that
 $
  W\big(\overline{X^{-1}X}\big)\leq W\big(\overline{X^{-1}}\big) + W\big(\overline{X}\big)-\frac{1}{2} = 0
 $
 where the last equality is by Lemma~\ref{lem_wxpluswxbar}.
 By Lemma~\ref{lem_nonnegwinding} $W(\overline{X^{-1}X})\geq 0$
 with equality only when $X=I$.
 Since $X\neq I$ we must have $a_{d'-1}'+b=1$.
 The same analysis on $\overline{X}\,\overline{X^{-1}}$ implies
 that $a_{d-1} + b' = 1$.
\end{proof}

\begin{lemma}\label{lem_XTXminimal}
 Let $X\in\psltz$ and $c\in\Z_{>0}$.
 Then 
 $\overline{X^{-1}T^cX} =_G \overline{X^{-1}}T^c\overline{X}$ and
 in particular
 $W(\overline{X^{-1}T^cX}) = W(\overline{X^{-1}}T^c\overline{X}).$
\end{lemma}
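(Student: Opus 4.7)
The plan is to show that, with at most one application of the defining relation of $G$, the concatenation $\overline{X^{-1}}\,T^c\,\overline{X}$ becomes a word literally in the standard form of Theorem~\ref{thm_standardpsltz}; uniqueness of standard form in $\psltz$ will then force it to equal $\overline{X^{-1}T^cX}$ as a string in $\STgroup$, whence the claim. Write
\[
\overline{X}=_{\STgroup} T^bST^{a_0}\ldots ST^{a_{d-1}}, \qquad \overline{X^{-1}}=_{\STgroup} T^{b'}ST^{a'_0}\ldots ST^{a'_{d'-1}},
\]
so that, when $d,d'\geq 1$, the product reduces in the free group $\STgroup$ to
\[
\overline{X^{-1}}\,T^c\,\overline{X} \,=_{\STgroup}\, T^{b'}ST^{a'_0}\ldots ST^{a'_{d'-1}+c+b}ST^{a_0}\ldots ST^{a_{d-1}}.
\]
By hypothesis all interior $T$-exponents of this word are $>1$ except possibly the middle exponent $a'_{d'-1}+c+b$.

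Lemma~\ref{lem_wraparoundinverses} pins this middle exponent down: either $X=_{\psltz}T^k$ or $X=_{\psltz}T^kST^a$, in which case $a'_{d'-1}+b=0$ and the middle exponent is $c$; or else $a'_{d'-1}+b=1$ and the middle exponent is $c+1\geq 2$. The degenerate case $X=_{\psltz}T^k$ is immediate, since both sides then freely reduce to $T^c$ in $\STgroup$. When the middle exponent is $\geq 2$ the concatenation is already in standard form, so the uniqueness clause of Theorem~\ref{thm_standardpsltz} identifies it with $\overline{X^{-1}T^cX}$ as a string in $\STgroup$, hence in $G$.

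The genuinely delicate case---and the step I expect to be the main obstacle, because it is the only one that actually requires using the $G$-relation rather than free reduction---is $X=_{\psltz}T^kST^a$ with $c=1$. Here the middle exponent equals $1$ and the concatenation simplifies in $\STgroup$ to $T^{-a}STST^a$, which is not in standard form. I would apply the defining relation $STS=_{G}T^{-1}ST^{-1}$ of $G$ directly inside the word to obtain
\[
\overline{X^{-1}}\,T\,\overline{X} \,=_{\STgroup}\, T^{-a}(STS)T^a \,=_{G}\, T^{-a-1}ST^{a-1}.
\]
The right-hand side has $d=1$ and so is vacuously in standard form, so uniqueness gives $\overline{X^{-1}TX}=_{\STgroup}T^{-a-1}ST^{a-1}$ and hence $\overline{X^{-1}TX}=_G \overline{X^{-1}}\,T\,\overline{X}$, as required.

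The ``in particular'' statement for the winding number follows immediately: $W\colon\STgroup\to\tfrac{1}{12}\Z$ is a homomorphism which sends the relator $STS(T^{-1}ST^{-1})^{-1}$ to $0$, so it descends to a homomorphism on $G$ and any equality in $G$ is preserved by $W$.
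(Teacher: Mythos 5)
Your proof is correct and follows essentially the same route as the paper's: both arguments dispose of the degenerate cases $X=_{\psltz}T^k$ and $X=_{\psltz}T^kST^a$ (the latter with $c=1$ requiring a single application of $STS=_G T^{-1}ST^{-1}$), and both invoke Lemma~\ref{lem_wraparoundinverses} together with the uniqueness clause of Theorem~\ref{thm_standardpsltz} to see that in all remaining cases the concatenation $\overline{X^{-1}}\,T^c\,\overline{X}$ is already in standard form. Your organization by the value of the middle exponent is a slightly cleaner packaging of the same case analysis, and your explicit computation $T^{-a}STST^{a}=_G T^{-a-1}ST^{a-1}$ matches the paper's (modulo the paper's relabeling of the parameter).
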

\begin{proof}
If $X=_{\psltz}T^k$ for some $k\in Z$ then
$
 \overline{X^{-1}T^cX} =_{\STgroup} \overline{X^{-1}}T^c\overline{X} =_{\STgroup} T^c
$
so the result holds.  If $X=_{\psltz} T^kST^a$ for some $k,a\in\Z$ then there are two cases. If $c>1$ then
$
 \overline{X^{-1}T^cX} =_{\STgroup} \overline{X^{-1}}T^c\overline{X} =_{\STgroup} T^kST^cST^{-k}
$
so the result holds.
If $c=1$ then
$
 \overline{X^{-1}T^cX} =_{\STgroup} T^{k-1}ST^{-k-1}
$
while
$
 \overline{X^{-1}}T^c\overline{X} =_{\STgroup} T^kSTST^{-k}
$
and the result still holds.

If $X\neq_{\psltz} T^k$ and $X\neq_{\psltz} T^k S T^a$ for all $k,a\in\Z$, then $\overline{X^{-1}}T^c\overline{X}$ is already in standard form
for any $c>0$ by Lemma~\ref{lem_wraparoundinverses}, so
$\overline{X^{-1}T^cX} =_{\STgroup} \overline{X^{-1}}T^c\overline{X}.$
\end{proof}

\section{Minimal models for semitoric helices}
\label{sec_minhelices}

\begin{definition}
 An $S$\--positive word with no leading $T$,
 $ST^{a_0}\ldots ST^{a_{d-1}}\in\STgroup$,
is \emph{minimal} if and only if $a_0, \ldots, a_{d-1}\neq 1$.
\end{definition}
Minimal words are those associated to minimal helices.

\begin{lemma}\label{lem_reducingminwords}
 Suppose $\sigma=ST^{a_0}\ldots ST^{a_{d-1}}\in\STgroup$ is minimal
 and there exists $X\in G\setminus\{S^{2\ell}T^k\}_{\ell, k\in\Z}$ such that
 \[
  \sigma =_G S^4 X^{-1}T^c X.
 \]
 Then, after cyclically reordering $a_0, \ldots, a_{d-1}$ if necessary, $a_0\leq 0$
 and one of the following hold:
\begin{enumerate}[label=\textup{(\roman*)}]
 \item $a_0 = 0$ and $\overline{\sigma} =_{\STgroup} T^{a_1}ST^{a_2}\ldots ST^{a_{d-1}}$;
 \item $a_0 < 0$ and $\overline{\sigma} =_{\STgroup} (TST)^{-a_0}T^{a_1}ST^{a_2}\ldots ST^{a_{d-1}}$.
\end{enumerate}
\end{lemma}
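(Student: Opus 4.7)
The plan is to apply the reduction algorithm from the proof of Theorem~\ref{thm_standardpsltz} and track the winding number. Since $W$ is a homomorphism on $G$, we have
\[
 W(\sigma) = W(S^4) + W(X^{-1}T^cX) = 1 - c/12.
\]
Because $X$ does not project to any $T^k$ in $\psltz$, Lemma~\ref{lem_wxpluswxbar} gives $W(\overline X) + W(\overline{X^{-1}}) = 1/2$, and Lemma~\ref{lem_XTXminimal} yields $\overline\sigma =_G \overline{X^{-1}}T^c\overline X$, so $W(\overline\sigma) = 1/2 - c/12$ and hence $W(\sigma) - W(\overline\sigma) = 1/2$.

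Recall from the proof of Theorem~\ref{thm_standardpsltz} that Reductions~\ref{reduction1} and~\ref{reduction2} each decrease $W$ by exactly $1/2$, while Reduction~\ref{reduction3} preserves it. Hence reducing $\sigma$ to $\overline\sigma$ uses exactly one application of Reduction~\ref{reduction1} or~\ref{reduction2}, plus some (possibly zero) applications of Reduction~\ref{reduction3}. Since $\sigma$ is minimal, no $STS$ appears in $\sigma$ initially, so Reduction~\ref{reduction3} cannot apply first; the initial move must be Reduction~\ref{reduction1} or~\ref{reduction2}. If all $a_i \geq 2$ then $\sigma$ is itself in standard form (with $b=0$), contradicting the winding-number gap; combined with minimality ($a_i \neq 1$), this forces some $a_i \leq 0$. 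Cyclic permutation of $(a_0, \ldots, a_{d-1})$ corresponds to conjugating $\sigma$ in $G$ and preserves the hypotheses, so we may rotate to put $a_0 = 0$ whenever some $a_i$ vanishes, and otherwise put $a_0 < 0$.

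In Case (i), the leading $ST^0S = S^2$ is eliminated by Reduction~\ref{reduction1}, producing $T^{a_1}ST^{a_2}\cdots ST^{a_{d-1}}$. For this to be the standard form $\overline\sigma$, the internal powers $a_2, \ldots, a_{d-2}$ must exceed $1$; any violation ($a_i \leq 0$ for $i$ in this range) would admit a further $W$-decreasing reduction, exceeding the $1/2$ budget, and combined with minimality this forces $a_i \geq 2$, establishing (i). In Case (ii), Reduction~\ref{reduction2} applied to the leading $ST^{a_0}S$ gives $(TST)^{-a_0}T^{a_1}ST^{a_2}\cdots ST^{a_{d-1}}$; using $(TST)^n = T(ST^2)^{n-1}ST$, this equals
\[
 T(ST^2)^{-a_0 - 1}ST^{a_1 + 1}ST^{a_2}\cdots ST^{a_{d-1}},
\]
whose internal $T$-powers are $2, \ldots, 2, a_1 + 1, a_2, \ldots, a_{d-2}$. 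The budget argument forces $a_1 + 1 > 0$ and $a_i > 0$ for $i = 2, \ldots, d-2$; combining with $a_i \neq 0$ (from the rotation choice) and $a_i \neq 1$ (minimality) gives $a_1 \geq 2$ and $a_i \geq 2$ for $i = 2, \ldots, d-2$, placing the word in standard form and establishing (ii).

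The main obstacle will be controlling the potentially cascading effect of Reduction~\ref{reduction3} after the single $W$-decreasing step. In Case (ii), the expansion of $(TST)^{-a_0}$ introduces many $ST$-pairs that could combine with the following $ST^{a_1 + 1}$ to form an $STS$ subword if $a_1 + 1 = 1$, triggering a cascade that would propagate back through the $(ST^2)$ factors and alter the claimed structure. The lower bound $a_1 + 1 \geq 3$---which comes from minimality, the rotation choice ($a_1 \neq 0$), and the winding-number budget ($a_1 + 1 > 0$) together---rules this out, as do the analogous bounds on the remaining internal $a_i$. With those bounds in place, the resulting word is already in standard form, and uniqueness of the standard form from Theorem~\ref{thm_standardpsltz} completes the argument.
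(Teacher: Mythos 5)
Your proposal is correct and follows essentially the same route as the paper's proof: compute $W(\sigma)=1-\nicefrac{c}{12}$ and $W(\overline{\sigma})=\nicefrac{1}{2}-\nicefrac{c}{12}$ via Lemmas~\ref{lem_XTXminimal} and~\ref{lem_wxpluswxbar}, conclude that exactly one $W$\--decreasing reduction separates $\sigma$ from $\overline{\sigma}$, and then use minimality (no $a_i=1$) to rule out any blowdowns, so that the word obtained after the single application of Reduction~\ref{reduction1} or~\ref{reduction2} is already in standard form. Your extra bookkeeping in case (ii) (expanding $(TST)^{-a_0}$ and checking the junction power $a_1+1$) is the same point the paper handles by excluding $a_1=0$ in that case.
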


\begin{proof}
 Notice
 $
  W(\sigma) = W(S^4 X^{-1}T^c X) = 1 -\frac{c}{12}
 $
 while
 \begin{align*}
  W(\overline{\sigma}) &= W(\overline{X^{-1}T^cX}) = W(\overline{X^{-1}}T^c\overline{X}) \\&= W(\overline{X^{-1}}) + W(\overline{X}) - \frac{c}{12} = \frac{1}{2}-\frac{c}{12}
 \end{align*}
 by Lemmas~\ref{lem_XTXminimal} and~\ref{lem_wxpluswxbar} since $X\neq_{\psltz} T^k$ for any $k\in\Z$.
 Thus, $W(\sigma) \neq  W(\overline{\sigma})$ so $\sigma$ is not in
 standard form.  This means that $a_j \leq 1$ for some fixed $j\in\{0, \ldots, d-2\}$
 and since $\sigma$ is minimal this implies $a_j\leq 0$.

 If $a_j = 0$ for some $j\in\{0, \ldots, d-2\}$ then reorder so $a_0=0$ and
 $\sigma = S^2T^{a_1}ST^{a_2}\ldots ST^{a_{d-1}}$.
 Notice that
 $
  \eta = T^{a_1}ST^{a_2}\ldots ST^{a_{d-1}}
 $
 satisfies $\eta =_{\psltz} \si$ so 
 $\overline{\eta} =_{\STgroup}\overline{\si}$ 
 and also notice $W(\eta) = W(\si) - \nicefrac{1}{2} = W(\overline{\sigma})$.  
 All steps
 in the reduction algorithm in the proof of Theorem~\ref{thm_standardpsltz}
 reduce the winding number, except for the blowdown 
 $STS\rightarrow T^{-1}ST^{-1}$,
 so the only possible step to reduce $\eta$ into standard form is a blowdown.
 For a blowdown to be possible we must have $a_j =1$ for some
 $j\in\{1, \ldots, d-1\}$, contradicting the minimality
 of $\sigma$.  Thus, $\eta =_{\STgroup}\overline{\eta}$ so
 $\overline{\sigma} =_{\STgroup} \eta$.

 Otherwise, $a_j \neq 0$ for all $j\in\{0, \ldots, d-1\}$ so, after 
 cyclically reordering, we may assume
 $a_0 <0$.  In this case let
 \[
  \eta' = (TST)^{-a_0}T^{a_1}ST^{a_2}\ldots ST^{a_1}
 \]
 and notice $\eta' =_{\psltz} \sigma$ so 
 $\overline{\eta'} =_{\STgroup}\overline{\si}$.
 Again, $W(\eta') = W(\overline{\sigma})$ so the only
 possible reduction move would be a blowdown, but
 if a blowdown could be performed on $\eta'$ that
 would contradict the minimality of $\sigma$, except
 in the case that $a_1=0$, which we have assumed does not occur.
 Thus, $\overline{\sigma} = \eta'$.
\end{proof}

Here we classify all words associated to minimal
semitoric helices.  Recall $\mathcal{S}$ from
Equation~\eqref{eqn_mathcalS}.

\begin{lemma}[Classification of minimal words]
 \label{lem_classify}
 Let $\mathcal{H}$ be a
 helix with complexity $c>0$.
 If $\mathcal{H}$ is minimal then there is an associated
 word $\si\in\STgroup$ which is exactly
 one of the following, where
 $A_0 = [v_0, v_1]$ for some
 $\{v_i\}_{i\in\Z}$ such that
 $\mathcal{H} =(d,c,[\{v_i\}_{i\in\Z}])$.
 \vspace{3pt}
 \newcounter{typecountword}
 \setcounter{typecountword}{0}
 \begin{center}
 \begingroup
 \setlength{\tabcolsep}{3.5pt}
 \begin{tabular}{l|l|c|rcl}
      \rm{type} &   \multicolumn{1}{|c|}{$\si\in\STgroup$}     & $c$ & &$A_0$& \\[0.5ex]
      \hline
  \refstepcounter{typecountword}\rm{(\thetypecountword)}\label{type1word} & $\si= ST^{-1}ST^{-4}$                           & $c=1$     & $ ST^{-2}   $&$=$&$ \matr{0}{-1}{1}{-2}$\rule{0pt}{4.5ex}  \\[3ex]
  \refstepcounter{typecountword}\rm{(\thetypecountword)}\label{type2word} & $\si = ST^{-2}ST^{-2}$                          & $c=2$     & $ ST^{-1}   $&$=$&$ \matr{0}{-1}{1}{-1}$  \\[3ex]
  \refstepcounter{typecountword}\rm{(\thetypecountword)}\label{type3word} & $\si= S^2T^aST^{-a-2}$         , $a\neq1, -3$   & $c=1$     & $ ST^{-a-1} $&$=$&$ \matr{0}{-1}{1}{-a-1}$  \\[3ex]
  \refstepcounter{typecountword}\rm{(\thetypecountword)}\label{type4word} & $\si= ST^{-1}ST^{-1}ST^{c-1}$                   & $c\neq2$  & $ I         $&$=$&$ \matr{1}{0}{0}{1}$  \\[3ex]
  \refstepcounter{typecountword}\rm{(\thetypecountword)}\label{type5word} & $\si= S^2T^aST^cST^{-a}$       , $a\neq\pm 1$   & $c\neq 1$ & $ ST^{-a}   $&$=$&$ \matr{0}{-1}{1}{-a}$  \\[3ex]
  \refstepcounter{typecountword}\rm{(\thetypecountword)}\label{type6word} & $\si= S^2T^aS^2T^{c-a}$        , $a\neq1, c-1$  & $c>0$     & $ I         $&$=$&$ \matr{1}{0}{0}{1}$  \\[3ex] 
  \refstepcounter{typecountword}\rm{(\thetypecountword)}\label{type7word} & $\si= S^2\overline{A_0^{-1}}T^c\overline{A_0}$  & $c>0$ & \,\,$A_0$&$\in$&$\mathcal{S}$ 
 \end{tabular}
 \endgroup
 \end{center}
 \vspace{3pt}
 where $a\in\Z$ is a parameter.
\end{lemma}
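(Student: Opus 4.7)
By Proposition~\ref{prop_correspondence}, a minimal helix $\mathcal{H}$ of complexity $c>0$ determines (up to cyclic reordering of the $a_i$) a word $\sigma = ST^{a_0}\cdots ST^{a_{d-1}} \in \STgroup$ with every $a_i \neq 1$, together with an element $X\in G$ representing $A_0 = [v_0,v_1]$ such that $\sigma =_G S^4 X^{-1}T^cX$. The plan is to classify $\sigma$ by comparing it with its standard form $\overline{\sigma}\in\STgroup$ from Theorem~\ref{thm_standardpsltz}: on the one hand, Lemma~\ref{lem_XTXminimal} identifies $\overline{\sigma} =_{\STgroup} \overline{X^{-1}}T^c\overline{X}$; on the other, Lemma~\ref{lem_reducingminwords} expresses $\overline{\sigma}$ as a very short modification of $\sigma$ itself. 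Matching these two descriptions and invoking the uniqueness of standard form (Theorem~\ref{thm_standardpsltz}) will force $\sigma$ into one of the seven listed forms.

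First I would dispose of the case where $X$ projects to $T^k$ in $\psltz$, in which $\sigma =_G S^4 T^c$. Writing this element as a minimal word $ST^{a_0}\cdots ST^{a_{d-1}}$ (so that each $a_i \neq 1$) and allowing cyclic reordering isolates types~\eqref{type4word} and~\eqref{type6word}; the parameter restrictions $c\neq 2$ in~\eqref{type4word} and $a\neq 1, c-1$ in~\eqref{type6word} come precisely from enforcing $a_i\neq 1$ and ruling out degenerate collapses.

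For $X$ not of this form, Lemma~\ref{lem_reducingminwords} applies, and I would split on the length $e$ of $\overline{X}$. When $e \geq 2$, the matching of $\overline{\sigma}$ from Lemma~\ref{lem_reducingminwords} with $\overline{X^{-1}}T^c\overline{X}$ forces scenario $(i)$ (the $a_0=0$ case) and yields $\sigma =_{\STgroup} S^2\overline{X^{-1}}T^c\overline{X}$; the minimality condition $a_{d-1}\neq 1$, combined with Lemma~\ref{lem_wraparoundinverses} which controls the last exponent of $\overline{X}$, becomes the defining condition $a_{d-1}\notin\{0,1\}$ of $\mathcal{S}$, while the bound $e>5$ is extracted by checking directly that the finitely many $e\in\{2,3,4,5\}$ outcomes collapse into types~\eqref{type1word}--\eqref{type6word}. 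This produces type~\eqref{type7word}. When $e=1$, so $X$ projects to $ST^a$ in $\psltz$ (any leading $T^k$ being absorbed by the helix equivalence), I would compute $S^4\cdot T^{-a}S\cdot T^c\cdot ST^a$ directly, apply at most one reduction $ST^{-n}S\mapsto (TST)^n$ and/or cyclic reordering, and stratify by $c$ and $a$ to obtain types~\eqref{type1word}, \eqref{type2word}, \eqref{type3word}, and~\eqref{type5word}.

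The main obstacle will be the case-by-case bookkeeping in the small-$e$ regime: each specific pair $(c,a)$ requires its own minimization, and the listed parameter exclusions (such as $k\neq\pm 2$ in~\eqref{type3word} and $a\neq\pm 1$ in~\eqref{type5word}) must be verified to simultaneously enforce the minimality condition $a_i\neq 1$ and to exclude overlaps with already-listed types. Uniqueness of the representative within each type will then follow from the cyclic-reordering ambiguity built into Proposition~\ref{prop_correspondence} combined with the rigidity of the standard-form recovery of $X$ from $\overline{\sigma}$.
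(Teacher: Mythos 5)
Your overall strategy coincides with the paper's: reduce to the word equation $\si =_G S^4X^{-1}T^cX$, split on the shape of $X$ in $\psltz$, and compare $\si$ with its standard form via Lemmas~\ref{lem_reducingminwords}, \ref{lem_XTXminimal} and~\ref{lem_wraparoundinverses}. Your handling of $X=_{\psltz}T^k$ and of $X=_{\psltz}T^kST^a$ matches the paper's Cases I and IIa. However, two of your claims in the remaining case are wrong as stated. First, it is not true that $\overline{X}$ having length $e\geq 2$ forces scenario $(i)$ of Lemma~\ref{lem_reducingminwords}. Comparing last exponents does rule out scenario $(ii)$ when $d\geq 3$ (Lemma~\ref{lem_wraparoundinverses} forces $\overline{X}$ to end in $ST^0$ once $\overline{\si}=\overline{X^{-1}}T^c\overline{X}$ begins with $T^1S$, so the last exponent of $\overline{\si}$ is $0$, contradicting $a_j\neq 0$), but for $d=2$ scenario $(ii)$ survives: the paper's Case IIb shows that $\si=ST^{-4}ST^{-1}$ with $c=1$ arises exactly this way, with $X=_{\psltz}TST^2S$ of length $2$. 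That word is only a cyclic reordering of type~\eqref{type1word}, so no new type appears, but the case must be analyzed (a winding-number count pins down $a_0=-4$, $a_1=-1$ and then $c=1$) rather than declared impossible.

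Second, and more substantively, your plan to extract the bound ``$>5$'' by checking that $e\in\{2,3,4,5\}$ collapses into types~\eqref{type1word}--\eqref{type6word} would fail. For any $X$ with $\overline{X}$ of length $e\geq 2$ in scenario $(i)$ one gets $\si=S^2\,\overline{X^{-1}}T^c\overline{X}$, which contains at least six occurrences of $S$ (since $\overline{X^{-1}}$ then also has length $\geq 2$) and hence cannot be a cyclic reordering of any of types~\eqref{type1word}--\eqref{type6word}, all of which have length at most $4$. These words are genuine instances of type~\eqref{type7word}: the paper's own worked example in Section~\ref{sec_representativeex} has $\overline{A_0}=ST^2ST^2$ of length $2$ and yields a minimal helix of length $7$. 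The ``$>5$'' in the definition of $\mathcal{S}$ concerns the length $d$ of the resulting helix, which is automatic once $e\geq 2$; there is nothing to eliminate, and in any case there are infinitely many $X$ with $\overline{X}$ of each fixed length, so no finite check is available. With these two points repaired, your argument becomes the paper's proof.
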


\begin{proof}
 Suppose that $\si=_{\STgroup} ST^{a_0}\ldots ST^{a_{d-1}}$
 is minimal and associated to a semitoric helix
 $\mathcal{H}$ of length $d$ and complexity $c>0$.
 By Lemma~\ref{lem_sthelixeqn}
 there exists some $X\in G$ such that
 \begin{equation}\label{eqn_sthelixproofG}
  \si =_G S^4 X^{-1}T^c X.
 \end{equation}
 We will proceed by cases on $X$, and show that in each case
 that $\si$ is one of type~\eqref{type1word}-\eqref{type7word}
 in the statement of the Lemma.

 \emph{Case I: $X=_{\psltz} T^k$ for some $k\in\Z$.}  This implies that
 \[
  ST^{a_0}\ldots ST^{a_{d-1}-c} =_G S^4,
 \]
 and so $(a_0, \ldots, a_{d-2}, a_{d-1}-c)\in\Z^d$ are associated
 to a minimal toric fan.  Such words are completely classified in
 \cite[Lemma 4.8]{KaPaPe2016} and we conclude either
 $d=3$ and $a_0 = a_1 = a_2-c=-1$, which is minimal
 only when $c\neq 2$, or $d=4$ and
 $a_0 = a_2 =0$, $a_3 = c- a_1$, which is minimal
 only when $a\neq 1, c-1$.  Thus $\si$ is either of type~\eqref{type4word}
 or~\eqref{type6word}.

 \emph{Case II: $X\neq_{\psltz} T^k$ for all $k\in\Z$.}
 In light of Equation~\eqref{eqn_sthelixproofG} apply Lemma~\ref{lem_reducingminwords}
 to $\si$ and conclude that,
 after passing to an equivalent helix by cyclically permuting,
 \[
  \sigma =_{\STgroup} ST^{a_0}\ldots ST^{a_{d-1}}
 \]
 satisfies either
 \begin{enumerate}[nosep]
  \item $a_0 = 0$; or
  \item $a_j\neq0$ for all $j=0, \ldots, d-1$ and $a_0 <0$.
 \end{enumerate}
 If $a_0=0$ then
 \[
  \overline{\sigma} = T^{a_1}ST^{a_2}\ldots ST^{a_{d-1}}
 \]
 and otherwise
 \[
  \overline{\sigma} = (TST)^{-a_0}T^{a_1}ST^{a_2}\ldots ST^{a_{d-1}}.
 \]
 By cyclically permuting the $a_i$, which corresponds
 to conjugating Equation~\eqref{eqn_sthelixproofG} by $ST^{a_i}$ for various
 $i$, we change $X$, but $X\neq T^k$ still holds.
 We now have three further cases on $X$.

 \emph{Case IIa: $X=_{\psltz}T^k S T^a$ for $k,a\in\Z$.}
 First assume $a_0=0$.
 If $c=1$ then
 \[
  \overline{X^{-1}T^cX} =_{\STgroup} T^{-a-1}ST^{a-1}
 \]
 so
 $
  \si = S^2 T^{-a-1}ST^{a-1}
 $
 which is minimal
 for $a\neq \pm 2$ and is of type~\eqref{type3word}.
 If $c\neq 1$ then
 \[
  \overline{X^{-1}T^cX} = T^{-a}ST^cST^a
 \]
 so $\si = S^2T^{-a}ST^cST^a$ which is minimal if $a\neq \pm 1$
 and is of type~\eqref{type5word}.

 Now suppose $a_0\neq 0$.  Then
 \[
  \overline{\si} = (TST)^{-a_0}T^{a_1}ST^{a_2}\ldots ST^{d-1}
 \]
 and $a_0<0$.
 If $c=1$ then
 $\overline{\si} = T^{-a-1}ST^{a-1}$
 so $a=-2$ and thus $\overline{\si} = (TST)T^{-4}$
 so $\si = ST^{-1}ST^{-4}$, which is of type \eqref{type1word}.
 If $c=2$, then $\overline{\si} = T^{-a}ST^2ST^a$ which
 means $a=-1$ and $a_0 = -2$ so $\si = ST^{-2}ST^{-2}$,
 which is of type~\eqref{type2word}.
 If $c>2$, then $\overline{\si} = T^{-a}ST^cST^a$ which
 means $a=-1$ and $a_0 = -1$ so $\si = ST^{-1}ST^{c-1}ST^{-1}$,
 which is of type~\eqref{type4word}.

 \emph{Case IIb: $X\neq_{\psltz}T^k, T^kST^a$ for all $k,a\in\Z$ and $a_i\neq 0$ for all $i=0,\ldots,d-1$.}
 In this case $a_0 < 0$.
 If $d=2$, then $\si = ST^{a_0}ST^{a_1}$ and
 $\overline{\si} = (TST)^{-a_0}T^{a_1}$ which means
 $(TST)^{-a_0}T^{a_1} =_{\STgroup} \overline{X^{-1}}T^c\overline{X}$.
 Since $\overline{X^{-1}}T^c\overline{X}$ starts
 with $TS$ it must end with $S$ by Lemma~\ref{lem_wraparoundinverses}
 so $a_1 = -1$.  Now $W(\si) = 1-\nicefrac{c}{12}$
 from Equation~\eqref{eqn_sthelixproofG} and 
 \[
  W(\si) = W(ST^{-n}ST^{-1}) = \frac{1}{12}(7+n)
 \]
 so $n = 5-c$
 and we have
 \begin{equation}\label{eqn_mainthmproof}
  (TST)^{5-c} T^{-1} =_{\STgroup} \overline{X^{-1}}T^c\overline{X}.
 \end{equation}
 The right side of Equation~\eqref{eqn_mainthmproof}
 contains $T^{c+1}$ while the highest power of $T$ on the
 left side is $T^2$, so $c>0$ implies $c = 1$. Thus we obtain
 $\si = ST^{-4}ST^{-1}$ and have type~\eqref{type3word}.

 If $d>2$ then
 \[
  (TST)^{-a_0} T^{a_1}ST^{a_2}\ldots ST^{a_{d-1}} =_{\STgroup} \overline{X^{-1}}T^c \overline{X}
 \]
 implies that $\overline{X^{-1}}T^c \overline{X}$
 must end with $S$ by
 Lemma~\ref{lem_wraparoundinverses}, so $a_{d-1}=0$ which
 contradicts our assumption in this case.

 \emph{Case IIc: $X\neq_{\psltz}T^k, T^kST^a$ for all $k,a\in\Z$ and $a_i=0$ for some $i\in\{0, \ldots, d-1\}$.}
 We have already used Lemma~\ref{lem_reducingminwords} to establish
 that either $a_0=0$ or $a_j\neq 0$ for all $j=0,\ldots, d-1$.
 Thus, in this case, $a_0=0$ so
 \[
  \si = S^2 \overline{\si} = S^2 \overline{X^{-1}}T^c\overline{X}
 \]
 which is minimal if $\overline{X}$ does not end with $ST$
 and $\overline{X^{-1}}$ does not begin with $TS$, and is
 of type~\eqref{type7word}.
\end{proof}

\begin{proof}[Proof of Theorem~\ref{thm_classifyvectors}]
 Let $\mathcal{H} = (d,c,[\{v_i\}_{i\in\Z}])$ be a minimal semitoric helix
 with associated integers
 $(a_0, \ldots, a_{d-1})\in\Z^d$.  Then
 $\sigma = ST^{a_0}\ldots ST^{a_{d-1}}$
 is a minimal word and, passing to an equivalent helix
 if necessary, we conclude $\sigma$ must be of some
 type~\eqref{type1word}-\eqref{type7word} in Lemma~\ref{lem_classify}.
 Types~\eqref{type1word}-\eqref{type6word} for $\sigma$ in Lemma~\ref{lem_classify}
 correspond to types~\eqref{type1}-\eqref{type6} for $\mathcal{H}$ in
 Theorem~\ref{thm_classifyvectors}.  Notice these
 each have length $d<5$.

 Otherwise, $\sigma$ must be of type~\eqref{type7word},
 which means there exists some $X =_G A_0$, where
 $A_0 = [v_0,v_1]\in\mathcal{S}$ and
 \[
  \sigma =_{\STgroup} S^2 \overline{X^{-1}}T^c\overline{X}.
 \]
 Since $A_0 \in\mathcal{S}$ notice that
 $A_0 = ST^{a_0}\ldots ST^{a_{\ell-1}}$ with $\ell\geq2$, which
 implies that $\sigma$ has at least six occurrences of $S$, so
 the length $d$ of $\mathcal{H}$ satisfies $d\geq 6$.
 \end{proof}

\begin{proof}[Proof of Corollary~\ref{cor_formof7}]
 From Equation~\eqref{eqn_thmclassify} in Theorem~\ref{thm_classifyvectors}
 we see that $a_0 = 0$
 and so the given recurrence relation $v_j = a_{j-2}v_{j-1} + v_{j-2}$
 with $j=2$ gives $v_2 = -v_0$.
 
 Suppose $\mathcal{H}$ has associated integers
  $a_0, \ldots, a_{d-1}$ and let $A_0 = [v_0, v_1]$.
 Then
 \begin{equation}
  S^2\overline{A_0^{-1}}T^c\overline{A_0} =_{\STgroup} ST^{a_0}\ldots ST^{a_{d-1}},
 \end{equation}
 implies
 \begin{equation}
  \label{eqn_Jmaxproof1}
  S^2 \overline{A_0^{-1}}T^c =_{\STgroup} ST^{a_0}\ldots ST^{a_{k+1}}
 \end{equation}
 for some $k\in\Z$, since $\overline{A_0}$ starts with $S$.
 By the recurrence relation
 $
  a_i v_{i+1} = v_i + v_{i+2}
 $
 we see
 \begin{equation}
  \label{eqn_Jmaxproof2}
  [v_{k+2}, v_{k+3}] =_{\sltz} A_0 ST^{a_0}\ldots ST^{a_{k+1}}.
 \end{equation}
 Combining Equations~\eqref{eqn_Jmaxproof1} and~\eqref{eqn_Jmaxproof2} yields
 $
  [v_{k+2}, v_{k+3}] =_{\psltz} A_0 S^2 A_0^{-1} T^c
 $
 which implies
 \[
  [v_{k+2}, v_{k+3}] =_{\psltz} T^c =_{\psltz} \matr{1}{c}{0}{1}
 \]
 so $v_{k+2}$ is the required vector.
\end{proof}

\section{Examples}
\label{sec_examples}

\subsection{A representative example of \tex{Theorem~\ref{thm_classifyvectors}}}
\label{sec_representativeex}

Suppose that $\mathcal{H} = (d,c,[\{v_i\}_{i\in\Z}])$ is a minimal
semitoric helix of length $d>4$.
By Corollary~\ref{cor_formof7}, $\mathcal{H}$ is of
type~\eqref{type7} and the representative $\{v_i\}_{i\in\Z}$
can be chosen to satisfy $v_0 = -v_2$.
Then $\mathcal{H}$ is determined by its complexity $c$
and the basis $(v_0,v_1)$ of $\Z^2$.

Let $\mathcal{H}$ have complexity $c=2$ and
\[
 v_0 = \vect{-1}{2}\textrm{ and }v_1 = \vect{-2}{3}.
\]
Next we compute $\mathcal{H}$.
Define $A_0 := [v_0, v_1]$, which means that
$A_0 = ST^2ST^2$ in terms of the generators $S,T$.
Define $A_i = [v_i, v_{i+1}]$ for $i\in\Z$ and notice
that $\det(v_i, v_{i+1})=\det(v_{i+1}, v_{i+2})=1$
implies that there exists some $a_i\in\Z$ so that
\[
[v_{i+1}, v_{i+2}]=[v_{i}, v_{i+1}] \matr{0}{-1}{1}{a_i}
\]
which means that $A_{i+1} = A_iST^{a_i}$ for all $i\in\Z$.
Then,
$
A_d =_{\sltz} A_{d-1}ST^{a_{d-1}} =_{\sltz} A_0ST^{a_0}ST^{a_{d-2}}\ldots ST^{a_{d-1}}
$
and since $\mathcal{H}$ is of length $d$ and complexity $2$,
$ A_d = T^2 A_0$.  Thus
$
 T^2 A_0 =_{\sltz} A_0ST^{a_0}\ldots ST^{a_{d-1}}
$
which implies
\begin{equation}\label{eqn_repexample}
 ST^{a_0}\ldots ST^{a_{d-1}} =_{\sltz} A_0^{-1} T^{2}A_0.
\end{equation}
Our goal is now to recover $a_0, \ldots, a_{d-1}$.
Let $\sigma =_{\STgroup} ST^{a_0}\ldots ST^{a_{d-1}}$.
Lifting Equation~\eqref{eqn_repexample} to the group $G$
yields
\begin{equation}\label{eqn_repexample2}
 \si =_G S^{4k} A_0^{-1}T^2A_0,
\end{equation}
for some choice of $k\in\Z$.  The fact that the vectors
$v_0, \ldots, v_{d-1}$ in the semitoric helix are arranged in
counter-clockwise order forces $k=1$ by
Proposition~\ref{prop_correspondence}.

By substituting for $A_0$ and using the relations of $G$
on Equation~\eqref{eqn_repexample2}
we see that
\[
 \sigma  =_{G} S^4(ST^2ST^2)^{-1}T^2(ST^2ST^2)\\
                             =_{G} S^2T^{-1}ST^2ST^3ST^2ST^2
\]
and by taking the standard form of both sides of this equality we have
\begin{equation}\label{eqn_exampleoverlinesig}
\overline{\sigma} =_{\STgroup} T^{-1}ST^2ST^3ST^2ST^2.
\end{equation}
From this we deduce
$
 W(\sigma) - W(\overline{\sigma}) = \frac{10}{12} - \frac{4}{12} = \frac{1}{2}
$
which means that $\sigma$ can be reduced to
$\overline{\sigma}$ by a substitution of
either
\[S^2 =_{\psltz} I \textrm{ or }ST^{-n}S =_{\psltz} (TST)^n\]
combined with several applications of the blowdown operation,
$STS=_{\psltz} T^{-1}ST^{-1}$.
This comes from 
observation that $S^2 =_{\psltz} I$ and $ST^{-n}S =_{\psltz} (TST)^n$ each
reduce the winding number by $\frac{1}{2}$
and the proof of Theorem~\ref{thm_standardpsltz}, which
uses an algorithm consisting of these three 
reductions to put an element
of $\psltz$ in standard form.

To finish, we recall that $\mathcal{H}$ is minimal, so it does
not admit a blowdown.  This means $\sigma$ may be obtained from $\overline{\sigma}$
by a sequence of blowups followed by either the addition of a $S^2$ term
or the replacement of $(TST)^n$ by $ST^{-n}S$ for some $n>0$.
By examining the form of $\overline{\sigma}$ in Equation~\eqref{eqn_exampleoverlinesig}
we see the requirement that $a_0 = 0$ forces the addition of $S^2$ at the front of the
word and the requirement that $a_i \neq 1$ for all $i$ implies that no blowups may
be performed before adding $S^2$ to the front which forces
$
 \sigma =_{\STgroup} S^2 T^{-1}ST^2ST^3ST^2ST^2.
$
Thus $d = 7$ and
$
 a_0 = 0, a_1 = -1, a_2 = 2, a_3 = 2, a_4 = 3, a_5 = 2, a_6 = 2.
$
Since we were given $v_0$ and $v_1$ these values determine $\mathcal{H}$
by the recurrence relation Equation~\eqref{eqn_virecurrence}.
An image of the first seven vectors in this helix is given
in Figure~\ref{fig_schematic}.

\subsection{Coupled angular momenta}
\label{sec_coupledspin}

Consider 
$\mathbb{S}^2 \times \mathbb{S}^2$ with coordinates 
$(x_1,y_1,z_1,x_2,y_2,z_2)$, where $\mathbb{S}^2$ is the
$2$\--sphere.
The Delzant polygon 
of $\mathbb{S}^2\times \mathbb{S}^2$ endowed with the toric 
integrable system given by $F=(z_1,z_2)$ with any product symplectic form 
is a rectangle, where the length of the sides are determined by the symplectic 
area of each copy of $\mathbb{S}^2$. Its associated fan is formed by the normal
vectors to the faces of the polygon, given by $(1,0), (0,1), (-1,0), (0,-1)$.
If instead we consider the symplectic semitoric manifold with $F=(J,H)$ 
on $\mathbb{S}^2\times\mathbb{S}^2$ with the standard product symplectic
form where
\begin{align*}
 J &= z_1 + \frac{5}{2} z_2\\
 H &= \frac{1}{2} z_1 + \frac{1}{2} (x_1 x_2 + y_1 y_2 + z_1 z_2)
\end{align*}
then we obtain the coupled spin system.
In~\cite{LFPe2016} it is shown that this is indeed a symplectic semitoric
manifold and the authors find the two representatives
of the polygons associated to the system
obtained by
V\~{u} Ng\d{o}c's cutting procedure.
One of these polygons has 
vertices $(-3.5,0),(-1.5,2),(3.5,2),(1.5,0)$. 
The semitoric helix associated to this manifold, which
can be recovered from the polygons as described in
Section~\ref{sec_polytohelix},
 is represented by the three vectors
\[
 v_0 = \vect{0}{1},\, v_1=\vect{-1}{1},\, v_2=\vect{0}{-1}
\]
and
is minimal of type~\eqref{type3}
from Theorem~\ref{thm_classifyvectors} with $k=1$. This is shown
in Figure~\ref{fig_coupledspin}.
Since the system has only one focus-focus point,
the helix can be obtained as the inwards pointing normal
vectors of the semitoric
polygon computed in~\cite{LFPe2016}, where $v_0$ is chosen to
be the inwards pointing normal vector of an edge adjacent to
the fake corner produced by the focus-focus point.
This is because in systems with one focus-focus point
the same toric momentum map that produces
the semitoric polygon also works for the construction of the
semitoric helix, as described in Section~\ref{sec_intrinsicconstruction}
(see Remark~\ref{rmk_consecutivefakes}).

\begin{figure}
 \centering
 \includegraphics[width = 280pt]{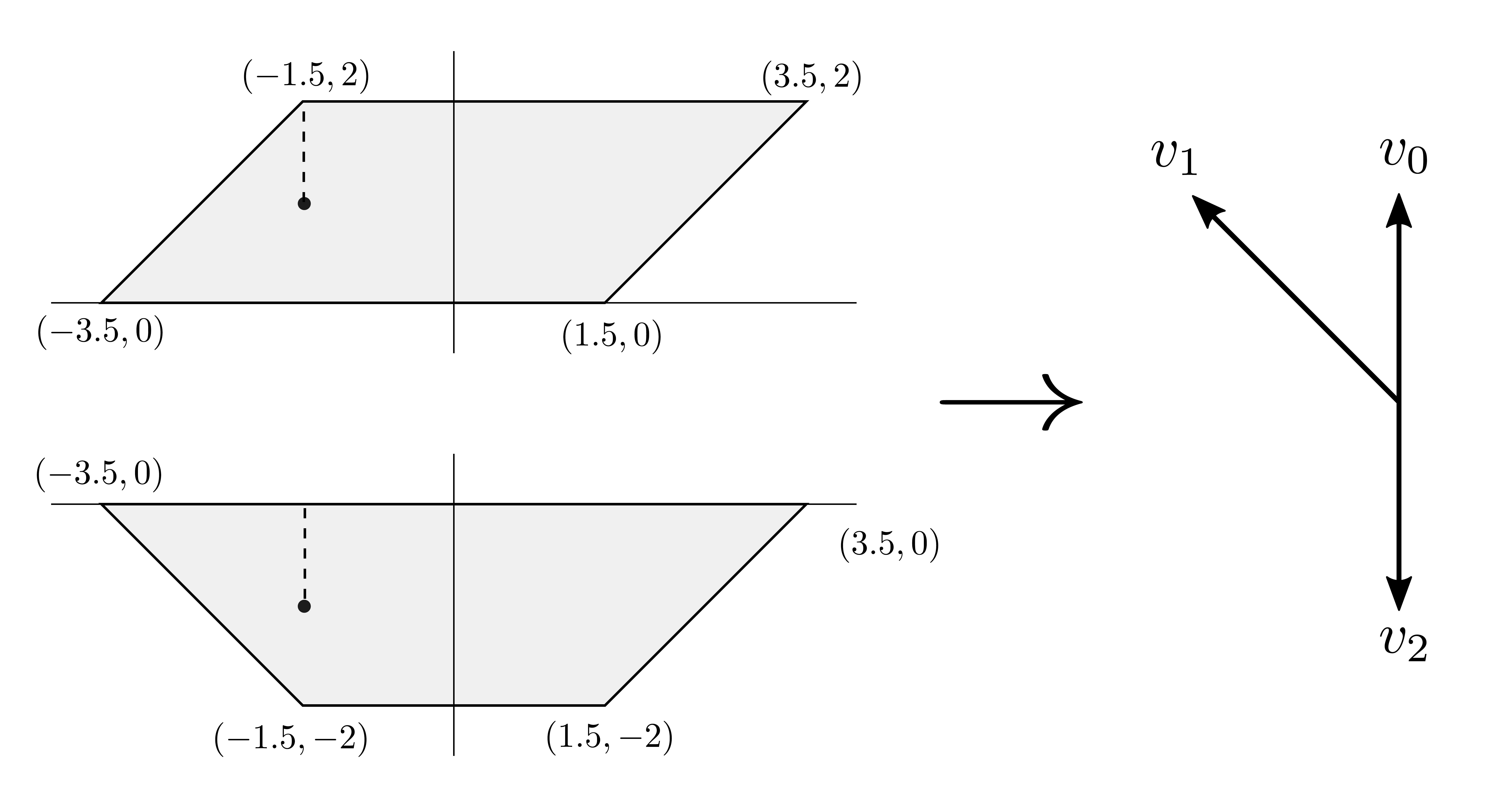}
 \caption{Helix for the coupled spin system.}
 \label{fig_coupledspin}
\end{figure}

\section{Proof of \tex{Theorem~\ref{thm_Jmax}}}
\label{sec_nonminhelices}

Since any helix may be obtained from a minimal
one by a finite sequence of blowups, 
Theorem~\ref{thm_classifyvectors} implies
results about the non-minimal helices as well.

\begin{lemma}
 \label{lem_Jmax}
 Any semitoric helix of complexity $c>2$ includes
 the vector $\vect{1}{0}$ or its negative.
\end{lemma}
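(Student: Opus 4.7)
The plan is to reduce to the classification of minimal semitoric helices already established. Given an arbitrary semitoric helix $\mathcal{H}$ of complexity $c>2$, I would first perform blowdowns repeatedly until no more are possible, obtaining a minimal semitoric helix $\mathcal{H}_0$. This process terminates because each blowdown strictly decreases the length (a positive integer). Crucially, the complexity is preserved by the blowdown operation (it is simply carried along as part of the data, as is clear from the definition of blowdown), so $\mathcal{H}_0$ also has complexity $c>2$.

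Next I would consult Theorem~\ref{thm_classifyvectors} to enumerate the possibilities for $\mathcal{H}_0$. Types~\eqref{type1},~\eqref{type2}, and~\eqref{type3} are ruled out immediately since they require $c\leq 2$. So $\mathcal{H}_0$ must be of type~\eqref{type4},~\eqref{type5},~\eqref{type6}, or~\eqref{type7}. For the first three types the representative vectors are written down explicitly in the theorem statement and I can simply observe that $\vect{1}{0}$ appears in each case (and $\vect{-1}{0}$ as well, in type~\eqref{type6}). For type~\eqref{type7} I would invoke Corollary~\ref{cor_formof7}, which produces a representative in which some $v_k$ equals $\vect{1}{0}$ or $\vect{-1}{0}$.

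To finish, I would recover $\mathcal{H}$ from $\mathcal{H}_0$ by reversing the blowdowns, i.e.~applying the corresponding sequence of blowups. Since a blowup only inserts the sum of two adjacent vectors and does not delete any of the existing ones, every vector that appears in a representative of $\mathcal{H}_0$ also appears in the corresponding representative of $\mathcal{H}$. In particular $\vect{1}{0}$ or $\vect{-1}{0}$ is still present.

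The only conceptual subtlety I anticipate lies in the equivalence relation on representatives: strictly speaking, the vectors listed in Theorem~\ref{thm_classifyvectors} and Corollary~\ref{cor_formof7} only describe one representative of the equivalence class. However, the property ``contains $\pm\vect{1}{0}$'' is preserved by the equivalence relation because the operation $T^k$ used to identify representatives fixes $\vect{1}{0}$ pointwise, and index shifts clearly preserve the set of vectors appearing. So once I have identified a single representative that contains $\pm\vect{1}{0}$, the statement is established, which is the sense in which the lemma is to be read. This bookkeeping is the only nontrivial point; the rest of the argument is a direct appeal to the classification.
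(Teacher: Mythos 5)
Your proposal is correct and follows essentially the same route as the paper: reduce to a minimal helix (noting blowups/downs preserve complexity and blowups only insert vectors), rule out types~\eqref{type1}--\eqref{type3} since $c>2$, read off $\pm\vect{1}{0}$ from the explicit representatives of types~\eqref{type4}--\eqref{type6}, and invoke Corollary~\ref{cor_formof7} for type~\eqref{type7}. Your added remark that $T^k$ fixes $\vect{1}{0}$, so the property is independent of the choice of representative, is a small piece of bookkeeping the paper leaves implicit, but it does not change the argument.
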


\begin{proof}
 We will show that this vector is in every minimal semitoric
 helix of complexity $c>2$, and since every semitoric helix can
 be produced by a sequence of blowups on a minimal semitoric helix
 and blowups do not remove vectors from the helix or change
 the complexity, the result will follow.

 Since $c>2$ the only possible types for minimal models are
 types~\eqref{type4}-\eqref{type7}.  By Theorem~\ref{thm_classifyvectors} we see
 that~\eqref{type4},~\eqref{type5}, and~\eqref{type6} include the required vector.
 Helices of type~\eqref{type7} include the required vector 
 by Corollary~\ref{cor_formof7}.
\end{proof}

\begin{proof}[Proof of Theorem~\ref{thm_Jmax}]
Suppose that $(M,\om,F = (J,H))$ is a compact symplectic semitoric manifold
with $c\geq 2$ focus-focus points and that $J$ achieves
its maximum and minimum at a single point each.
This means that $F(M)$ does not include as its boundary
a vertical line segment, which in turn, since a straightening
map cannot produce a vertical wall, implies that
$\helixmap(M,\om,F)$ does not include a vector on
the $x$\--axis.

Lemma~\ref{lem_Jmax} states that if $c>2$ then
$\helixmap(M,\om,F)$ includes a vector on the $x$\--axis,
so this case cannot occur.
Otherwise, $c=2$.
Since blowups do not change the complexity, Theorem~\ref{thm_classifyvectors}
implies that $\helixmap(M,\om,F)$ can be obtained from
a minimal semitoric helix of either type~\eqref{type2}
or type~\eqref{type7} by a sequence of blowups. By
Corollary~\ref{cor_formof7} any helix of type~\eqref{type7}
includes a vector on the $x$\--axis and thus any blowup
of a helix of type~\eqref{type7} must also include
such a vector. Thus, this case cannot occur.

Suppose that $\helixmap(M,\om,F)$ can be obtained from a minimal
helix of type~\eqref{type2} via a nonzero number of blowups.
There are two distinct blowups which can be performed on a
minimal helix of type~\eqref{type2}, adding either the
vector
\[
\vect{0}{1} + \vect{-1}{-1} = \vect{-1}{0}
\]
or the vector
\[
\vect{-1}{-1} + \vect{2}{1} = \vect{1}{0}.
\]
In either case a vector on the
$x$\--axis is including in the resulting helix, so this
case cannot occur either.
The only remaining case is that $(M,\om,F)$ is minimal
of type~\eqref{type2}.
\end{proof}

\bibliographystyle{amsplain}
\bibliography{st_min_biblio}

\addresses

\end{document}